\newtheorem{theorem}{Theorem}[section]
\newtheorem{lemma}[theorem]{Lemma}
\newtheorem{fact}[theorem]{Fact}
\newtheorem{claim}[theorem]{Claim}
\newtheorem{proposition}[theorem]{Proposition}
\newtheorem{conjecture}[theorem]{Conjecture}
\newtheorem{question}[theorem]{Question}
\newtheorem{case}{Case}
\newtheorem{subcase}{Subcase}[case]
\newtheorem{definition}[theorem]{Definition}
\title{An El-Zahar Type Theorem in $3$-graphs under Codegree Condition}
\author{Yangyang Cheng\thanks{Mathematical Institute, University of Oxford, Oxford, UK. Email: \texttt{yangyang.cheng@maths.ox.ac.uk}.} 
\and Mengjiao Rao\thanks{Data Science Institute, Shandong University, Jinan, China. Email: \texttt{mengjiao\_rao@outlook.com}.}
\and Guanghui Wang\thanks{School of Mathematics, Shandong University, Jinan, China. Email: \texttt{ghwang@sdu.edu.cn}.}
\and Yuqi Zhao\thanks{School of Mathematics, Shandong University, Jinan, China. Email: \texttt{zhaoyuqi123@mail.sdu.edu.cn}.}
}
\date{}
\begin{document}
\maketitle

\begin{abstract}
A 3-uniform loose cycle, denoted by $C_t$, is a 3-graph on $t$ vertices whose vertices can be arranged cyclically so that each hyperedge consists of three consecutive vertices, and any two consecutive hyperedges share exactly one vertex. The length of $C_t$ is the number of its hyperedges. We prove that for any $\eta>0$, there exists an $n_0=n_0(\eta)$ such that for any $n\geq n_0$ the following holds. Let $\mathcal{C}$ be a 
$3$-graph consisting of vertex-disjoint loose cycles $C_{n_1}, C_{n_2}, \ldots, C_{n_r}$ such that $\sum_{i=1}^{r}n_i=n$. Let $k$ be the number of loose cycles with odd lengths in $\mathcal{C}$. If $\mathcal{H}$ is a 3-graph on $n$ vertices with minimum codegree at least $(n+2k)/4+\eta n$, then $\mathcal{H}$ contains $\mathcal{C}$ as a spanning subhypergraph. 
The degree condition is approximately tight.
This generalizes the result of K\"{u}hn and Osthus for loose Hamilton cycle and the result of Mycroft for loose cycle factors in 3-graphs. 
Our proof relies on the regularity lemma and a transversal blow-up lemma recently developed by the first author and Staden.
\end{abstract}


\textbf{Keywords}: loose cycles, codegree, regularity lemma, transversal blow-up lemma

\section{Introduction}\label{section-introduction}
In 1952, Dirac \cite{dirac1952some} proved that every $n$-vertex graph with a minimum degree of at least $n/2$ contains a Hamilton cycle. This result has motivated extensive research into determining the minimum degree required to ensure a spanning subgraph in a given graph $G$, i.e., a subgraph that has as many vertices as $G$.
Given a graph $G$ of order $n$ and a graph $F$ of order $h$, an $F$-\emph{tiling} of $G$ is a subgraph of $G$ that consists of vertex-disjoint copies of $F$. We say that $G$ has a \emph{perfect $F$-tiling} (or an \emph{$F$-factor}) if $h$ divides $n$ and $G$ contains $n/h$ vertex-disjoint copies of $F$. Hell and Kirkpatrick \cite{Hell1983sima}
showed that when the graph $F$ has any component with at least three vertices, the problem of determining whether $G$ has an $F$-factor is NP-complete.
This leads to a natural question: What sufficient conditions on $G$ guarantee an $F$-factor?

The celebrated Hajnal-Szemer\'{e}di \cite{HS1970CT} theorem states that every $n$-vertex graph with minimum degree at least $(k-1)n/k$ contains a $K_k$-factor. The case $k=3$ was earlier established by Corr\'{a}di and Hajnal \cite{CH1963acta}.
For any fixed graph $F$, Alon and Yuster \cite{AY1996} first determined the degree condition guaranteeing an $F$-factor, expressed in terms of the chromatic number of $F$. This was later refined by K\"{u}hn and Osthus \cite{KO2009com}, who established the exact threshold.
When it comes to tiling problems with different number of vertices, El-Zahar \cite{el1984circuits} proposed the following classical conjecture.
\begin{conjecture}[\cite{el1984circuits}, El-Zahar's conjecture]\label{ELZconj}
Let $H$ be a graph consisting of $r$ vertex-disjoint
cycles of order $n_1, n_2, \ldots, n_r$ satisfying $n_1+ n_2+\cdots+n_r=n$. 
If $G$ is a graph on $n$ vertices with minimum degree at least $\sum_{i=1}^{r}\lceil n_i/2\rceil$, then $G$ contains $H$ as a spanning subgraph.
\end{conjecture}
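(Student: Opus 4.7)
The plan is to prove Conjecture~\ref{ELZconj} by induction on $r$, the number of cycles, combined with an extremal versus non-extremal dichotomy at each inductive step. The base cases $r = 1$ and $r = 2$ were settled in El-Zahar's original paper: $r = 1$ amounts to finding a Hamilton cycle of the correct parity in a graph of minimum degree $\lceil n/2 \rceil$, while $r = 2$ is handled through direct structural case analysis. For $r \ge 3$, I would isolate one cycle, say $C$ of length $n_r$, apply the inductive hypothesis to $G - V(C)$, and stitch the results together.

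The central difficulty is that removing $n_r$ vertices can drop any given vertex's degree by up to $n_r$, whereas the induction tolerates only a drop of $\lceil n_r/2 \rceil$. Hence $C$ must be chosen \emph{balanced}: every $v \in V(G) \setminus V(C)$ should satisfy $|N(v) \cap V(C)| \le \lfloor n_r/2 \rfloor$. To produce such a $C$, I would split into two regimes. In the \emph{extremal case}, if $G$ is close in edit distance to a bad configuration — a near-balanced complete bipartite graph obstructing odd cycles, or a near-union of cliques — then the structure of $G$ is almost pinned down, and one can build the full cycle factor explicitly, carefully tracking the contribution of odd cycles to the threshold $\sum_i \lceil n_i/2 \rceil$. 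In the \emph{non-extremal case}, the graph has enough expansion and pseudo-randomness that a rotation or probabilistic argument, anchored at the minimum degree hypothesis, yields a balanced $C_{n_r}$; for large $n$, this can be implemented via the regularity lemma together with a blow-up lemma analogous to (but simpler than) the transversal blow-up tool used in this paper.

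The main obstacle I expect is handling the non-extremal case \emph{uniformly} across all $n$, without any $\eta n$ slack to absorb error terms. For sufficiently large $n$, Abbasi-style regularity arguments and blow-up embeddings work cleanly, and the exact threshold can be recovered by combining them with the extremal analysis; for very small $n$, exhaustive or ad-hoc case checking is tractable. The genuinely hard regime is intermediate $n$, where neither asymptotic tools nor enumeration applies, and the inductive step must maintain the sharp threshold $\sum_i \lceil n_i/2 \rceil$ \emph{exactly}. Designing a rotation-based construction of a balanced cycle that operates at the sharp degree threshold for every $n$, together with correctly tracking the parity of the removed cycle against the remaining odd cycles $k' \in \{k, k-1\}$, is the crux of the open part of the conjecture.
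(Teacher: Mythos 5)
There is a genuine gap, and in fact more than one. First, note that the statement you are attacking is presented in this paper only as a conjecture (El-Zahar's conjecture, Conjecture~\ref{ELZconj}); the paper does not prove it, and the only proof it cites is Abbasi's confirmation for sufficiently large $n$ via the regularity--blow-up method applied directly to the whole $2$-factor $H$, not via induction on the number of cycles. So there is no ``paper proof'' for your argument to match, and the exact statement for all $n$ is not established anywhere in this paper.

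Second, your proposal is a plan rather than a proof: every load-bearing step is left unestablished. The inductive step hinges on finding a copy of $C_{n_r}$ that is \emph{balanced}, i.e.\ $|N(v)\cap V(C)|\le\lfloor n_r/2\rfloor$ for every $v\notin V(C)$, so that the degree arithmetic $\sum_{i=1}^{r}\lceil n_i/2\rceil-\lfloor n_r/2\rfloor\ge\sum_{i=1}^{r-1}\lceil n_i/2\rceil$ survives the deletion; but you give no argument that such a cycle exists at the exact threshold, and near the extremal configurations (unbalanced complete bipartite-type graphs, unions of cliques) it typically does not, which is precisely why you are forced into the extremal/non-extremal dichotomy. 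Neither branch is carried out: the extremal analysis is only described as ``the structure is almost pinned down,'' the non-extremal rotation/regularity construction of a balanced cycle at the sharp threshold is asserted rather than proved, and you yourself concede that the intermediate-$n$ regime, where no asymptotic slack is available, ``is the crux of the open part of the conjecture.'' Acknowledging the crux does not discharge it. There is also a smaller inaccuracy in the base case: for $r=1$ the hypothesis is $\delta(G)\ge\lceil n/2\rceil$ and the conclusion is simply a Hamilton cycle (Dirac), so there is no ``correct parity'' to arrange. As written, the proposal would at best reprove the large-$n$ case already covered by Abbasi, and only if the balanced-cycle and extremal steps were actually supplied.
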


Abbasi \cite{Abbsi} confirmed this conjecture for sufficiently large $n$ using the classical regularity-blow-up method.
Note that the case $r=1$ reduces to Dirac's theorem.
The famous Corr\'{a}di-Hajnal theorem \cite{CH1963acta} corresponds to the special case where $n_1=n_2=\cdots=n_r=3$ (i.e., triangle factors). 
The case $n_1=n_2=\cdots=n_r=4$, conjectured by Erd\H{o}s and Faudree \cite{EF1990}, was solved by Wang \cite{WH2010}.

Over the past few decades, extending tiling problems to hypergraphs has attracted considerable attention. 
We first introduce some definitions.
Given an integer $k\geq2$, a \emph{$k$-uniform hypergraph} (or \emph{$k$-graph} for short) consists of a vertex set $V$ and an edge set $E\subseteq\binom{V}{k}$.
In this paper, we only focus on 3-graphs. 
A hyperedge $\{x, y, z\}$ is denoted by $(xyz)$ for simplicity. 
For a $3$-graph $\mathcal{H}=(V, E)$ and two distinct vertices $u, v\in V$, let $N_\mathcal{H}(v):=\{\{x, y\}\in\binom{V}{2}:(vxy)\in E(\mathcal{H})\}$ and $N_\mathcal{H}(u, v):=\{w\in V:(uvw)\in E(\mathcal{H})\}$.
The \emph{degree} of a vertex $v\in V$, denoted by $d_{\mathcal{H}}(v)$, is defined to be $|N_\mathcal{H}(v)|$.
The \emph{minimum vertex degree} of $\mathcal{H}$ is defined as $\delta_1(\mathcal{H}):=\min_{v\in V}|N_{\mathcal{H}}(v)|$, and the \emph{minimum codegree} of $\mathcal{H}$ is defined as $\delta_2(\mathcal{H}):=\min_{u,v\in\binom{V}{2}}
|N_{\mathcal{H}}(u, v)|$.
A \emph{loose cycle} $C_t$ is a 3-graph on $t$ vertices whose vertices can be ordered cyclically such that hyperedges are triples of consecutive vertices, and consecutive hyperedges overlap in exactly one vertex.
Note that this definition indicates that all the loose cycles we focus on in this paper have $t\geq 6$.

As a natural generalization of Dirac's theorem, K\"{u}hn and Osthus \cite{kuhn2006loose} gave an approximate bound that ensures loose Hamilton cycles in $3$-graphs.

\begin{theorem}[\cite{kuhn2006loose}]\label{ko}
For each $\sigma>0$, there is an integer $n_0=n_0(\sigma)$ such that every $3$-graph on $n\geq n_0$ vertices with minimum codegree at least $(1/4+\sigma)n$ contains a loose Hamilton cycle.
\end{theorem}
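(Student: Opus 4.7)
The plan is to combine the absorbing method of R\"{o}dl, Ruci\'{n}ski and Szemer\'{e}di with the weak 3-graph regularity lemma. Roughly, I would first build a short \emph{absorbing loose path}, then construct an almost-spanning loose structure around it using regularity, and finally swallow the uncovered vertices using the absorbing property.

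For the absorbing stage I would construct a loose path $P_{\mathrm{abs}}$ of order $o(n)$ with distinguished endpoints $x,y$ such that, for every sufficiently small $U \subseteq V(\mathcal{H})\setminus V(P_{\mathrm{abs}})$ of the correct parity, the induced 3-graph $\mathcal{H}[V(P_{\mathrm{abs}}) \cup U]$ contains a loose path from $x$ to $y$ spanning $V(P_{\mathrm{abs}}) \cup U$. The basic building block is a \emph{$v$-absorber}: a small loose configuration that can be rewritten to insert $v$ without breaking the loose-path structure. Under $\delta_2(\mathcal{H}) \geq (1/4+\sigma)n$, a double counting argument (fixing two common neighbours of $v$ and extending by the codegree bound) shows every $v$ admits $\Omega(n^{c})$ absorbers for some absolute constant $c>0$; a random selection followed by greedy linking through $\mathcal{H}$ then produces a single loose path containing, for every vertex $v$, sufficiently many disjoint $v$-absorbers.

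Next, I would set aside a small random reservoir $R$ with the property, verified again via the codegree bound, that any two vertices of $\mathcal{H}-V(P_{\mathrm{abs}})$ are joined by many short loose paths whose internal vertices lie in $R$. Applying the weak 3-graph regularity lemma to $\mathcal{H}-V(P_{\mathrm{abs}})-R$, I would pass to the reduced cluster 3-graph, find a near-perfect cover by loose paths there, and inside each regular triple use a standard blow-up style argument to build a loose path covering all but a tiny fraction of its vertices. Concatenating these loose paths, together with $P_{\mathrm{abs}}$, via short connections through $R$, I obtain a loose cycle missing only an $o(n)$-set $U$. The absorbing property of $P_{\mathrm{abs}}$ then swallows $U$ and produces a loose Hamilton cycle.

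The main obstacle is the design of the $v$-absorber and the linking argument. In 3-graphs the absorber must simultaneously (i) exist in abundance under the tight bound $(1/4+\sigma)n$, (ii) perform its swap while preserving the \emph{loose} (not tight) path structure, and (iii) permit many disjoint absorbers to be chained into a single loose path without destroying the codegree condition on the residual graph. Checking that the slack $\sigma n$ is sufficient to absorb the cumulative error from the regularity lemma, the random reservoir, the connecting lemma, and the final absorption step is where most of the technical work will go.
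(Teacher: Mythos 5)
The paper does not prove Theorem~\ref{ko} at all: it is imported verbatim from K\"uhn and Osthus \cite{kuhn2006loose}, so the only meaningful comparison is with the original proof in that reference and with how the present paper reuses its machinery. Your absorption-based plan is a genuinely different route. K\"uhn and Osthus's argument predates the absorbing method: they apply the hypergraph regularity lemma, cover almost all vertices by loose paths built inside regular triples, and then insert the exceptional vertices \emph{directly}, using the codegree condition to find, for each exceptional pair, many common neighbours spread over many clusters --- exactly the strategy the present paper imitates in its Steps 1--2 (cf.\ Claim~\ref{commonneighbor} and the handling of $V_0$), with no absorbing path and no reservoir. Your approach instead modularizes the leftover/parity issues into an absorbing path plus a connecting reservoir; this is the standard modern skeleton, it is known to work for loose Hamilton cycles at codegree $(1/4+\sigma)n$, and it is more robust (absorption-type ideas are what later yielded the exact threshold $n/4$ of Czygrinow and Molla and vertex-degree analogues). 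The trade-off is that you must design loose-path absorbers and verify their abundance, whereas the direct approach pays with more delicate bookkeeping of exceptional vertices and cluster sizes. Two details you should make explicit: $n$ must be even (implicit in the statement, since a loose cycle on $n$ vertices has $n/2$ edges), so your absorbers should insert vertices in pairs or otherwise respect the parity of the absorbed set $U$; and the greedy linking of absorbers and of the almost-spanning paths through the reservoir must keep all pieces vertex-disjoint, which the codegree bound supports but which needs to be stated. Neither point is a gap in the strategy, only in the level of detail.
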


Czygrinow and Molla \cite{czygrinow2014siam} proved that a minimum codegree of $n/4$ suffices for the loose Hamilton cycle.
In \cite{kuhn2006loose}, K\"{u}hn and Osthus further showed that the codegree condition in Theorem \ref{ko} guarantees a $C_4$-factor, while Czygrinow, DeBiasio and Nagle \cite{CDN2014JGT} determined the exact codegree threshold.
Gao and Han \cite{Hanjie2017CPC} determined that the tight minimum codegree forcing a $C_6$-factor is $n/3$.
Answering a question of R\"{o}dl and Ruci\'{n}ski (\cite{2010Rodl}, Problem 3.15), Czygrinow \cite{Czy2016JGT} established the tight codegree threshold for $C_s$-factors in $3$-graphs for all even integer $s \geq 6$.

\begin{theorem}[\cite{Czy2016JGT}]\label{MYC}
For every even integer $s \geq 6$, there is $n_0$ such that every $3$-graph $\mathcal{H}$ on $n \in s \mathbb{Z}, n \geq n_0$ vertices with $\delta_2(\mathcal{H}) \geq \lceil s / 4\rceil n/s$ has a $C_s$-factor.
\end{theorem}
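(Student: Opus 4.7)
The plan is to adopt the now-standard absorbing-plus-regularity framework combined with a stability/extremal case split. First, I would identify the tight construction underlying the bound $\lceil s/4 \rceil n/s$: for $s \equiv 0 \pmod 4$, the $n/4$ barrier is realized by a balanced bipartition in which no hyperedge lies entirely on one side, forcing half the vertices of every loose $C_s$ to the small side; for $s \equiv 2 \pmod 4$, a slightly skewed variant gives the $(s+2)n/(4s)$ extremal example. Fix a small parameter $\alpha$ and call $\mathcal{H}$ $\alpha$-\emph{extremal} if its vertex set admits a partition within $\alpha n$ of the extremal structure, and $\alpha$-\emph{non-extremal} otherwise.

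For the non-extremal case I would first prove an absorbing lemma: there exists a set $A \subseteq V(\mathcal{H})$ with $|A| = o(n)$ such that $\mathcal{H}[A]$ has a $C_s$-factor, and for any $V_0 \subseteq V(\mathcal{H}) \setminus A$ with $|V_0| \leq \beta n$ and $s \mid |V_0|$, $\mathcal{H}[A \cup V_0]$ has a $C_s$-factor. The construction is probabilistic: design an absorber gadget that can locally realize a $C_s$-tiling both containing and avoiding each target $s$-set, show via the codegree bound and the non-extremal assumption that each such $s$-set admits many absorbers, then take a random subfamily and apply a union bound followed by a greedy cleanup. Next, invoke the weak regularity lemma for $3$-graphs, work in the reduced cluster $3$-graph, locate an almost $C_s$-factor by building loose-cycle-like structures inside quasi-random tuples, and lift this back to an almost $C_s$-tiling in $\mathcal{H} \setminus A$ covering all but $o(n)$ vertices. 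The leftover is then absorbed by $A$, yielding the $C_s$-factor.

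For the extremal case I would argue directly on the nearly-bipartite structure. After identifying the approximate partition, first cover the atypical vertices — those whose codegrees into each part deviate from the extremal prediction — greedily by individual $C_s$'s using freely available typical vertices; then partition the remaining typical vertices into groups whose intersections with each part match the ratio dictated by the extremal example, and tile each group with a single $C_s$, using that each loose $C_s$ needs roughly half of its vertices on the small side. The main obstacle is the absorbing lemma: for general even $s$, constructing the loose-cycle absorber gadget and proving its abundance under only the codegree lower bound $(1/4 + o(1))n$ requires a careful chaining of short loose paths and delicate degree counting, and the argument becomes tightest when $s \equiv 2 \pmod 4$, where the extremal configuration is unbalanced rather than the clean $n/4$ bipartition and every estimate must track the small additive slack $2/(4s)\cdot n$.
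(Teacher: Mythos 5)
This statement is not proved in the paper at all: it is Czygrinow's theorem, quoted from \cite{Czy2016JGT} and used as a black box, so there is no in-paper argument to compare yours against. Your plan does follow the general framework of the known proof in the literature (absorption plus weak hypergraph regularity in the non-extremal case, and a separate analysis near the extremal configuration to get the exact threshold), but as written it is only an outline: the absorbing lemma, the almost-perfect tiling step and the extremal-case tiling are stated as goals rather than proved, and those are precisely where the content of the theorem lies.

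There is also a concrete error that would derail the extremal case. The tight configuration for $\delta_2(\mathcal{H}) \geq \lceil s/4\rceil n/s$ is the cover (space) barrier: $V = A \cup B$ with $|A| = \lceil s/4\rceil n/s - 1$ and exactly those triples meeting $A$ as edges --- the same construction as in Proposition \ref{extremal} of this paper. Since the vertex cover number of a loose $C_s$ is $\lceil s/4\rceil$, each copy must place at least $\lceil s/4\rceil$ of its $s$ vertices (about a quarter, not half) in $A$, which is what forbids a factor; a balanced bipartition with no hyperedge inside one part has codegree about $n/2$ and is not extremal for any $s$, and for $s \equiv 0 \pmod 4$ the barrier is the same unbalanced cover construction with $|A| \approx n/4$, not a ``clean $n/4$ bipartition.'' Because your extremal-case routine partitions the typical vertices into groups ``matching the ratio dictated by the extremal example'' and asserts that each loose $C_s$ ``needs roughly half of its vertices on the small side,'' you would tile with the wrong ratio ($1{:}1$ instead of $\lceil s/4\rceil : \left(s-\lceil s/4\rceil\right)$), so the two sides cannot be exhausted simultaneously. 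Correcting the extremal structure, and then actually constructing the loose-cycle absorbers and proving their abundance at the exact (not merely asymptotic) threshold, is the substance still missing from the proposal.
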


Mycroft \cite{Myc2016JCTA} generalized the above result to $k$-graphs and gave an asymptotic minimum codegree condition for such factors. 
In this paper, we consider an analogue of El-Zahar's conjecture in $3$-graphs as follows.

\begin{theorem}[Main result]\label{oddandeven}
For every $\eta>0$, there exists an integer $n_0=n_0(\eta)$ such that the following holds for any $n\geq n_0$. Let $\mathcal{C}$ be a $3$-graph consisting of vertex-disjoint loose cycles $C_{n_1}, C_{n_2}, \ldots, C_{n_r}$, where $\sum_{i=1}^{r}n_i=n$. Let $k$ denote the number of loose cycles with odd lengths. If $\mathcal{H}$ is an $n$-vertex $3$-graph with $\delta_2(\mathcal{H})\geq(n+2k)/4+\eta n$, then $\mathcal{H}$ contains $\mathcal{C}$ as a spanning subhypergraph.
\end{theorem}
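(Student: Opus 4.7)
The plan is to follow the standard absorption--regularity--blow-up scheme, with the recently developed transversal blow-up lemma of the first author and Staden providing the spanning embedding step.

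Fix constants $0 < 1/n_0 \ll \mu \ll \epsilon \ll d \ll \eta$. First, for each target cycle $C_{n_i}$, we construct in $\mathcal{H}$ an \emph{absorbing loose path} $A_i$ of length $\Theta(\mu n)$, with two prescribed endpoints, capable of absorbing any balanced leftover set of up to $\mu^2 n$ vertices into a longer loose path with the same endpoints; this is standard given $\delta_2(\mathcal{H}) \ge n/4 + \eta n$. We also reserve a small \emph{reservoir} $R_0$ of $o(n)$ vertices for later connections. We then apply a $3$-graph regularity lemma to $\mathcal{H} \setminus (\bigcup_i V(A_i) \cup R_0)$, obtaining an $\epsilon$-regular partition into clusters $V_1,\ldots,V_t$ of common size $m$, and form the reduced $3$-graph $R$ on $[t]$ keeping triples that are $\epsilon$-regular with density at least $d$. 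A standard slicing argument transfers the codegree condition: $\delta_2(R) \geq \left(\tfrac{n+2k}{4n}+\tfrac{\eta}{2}\right) t$.

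The combinatorial heart of the proof is to find, in the reduced $3$-graph $R$, a partition of (almost all of) $V(R)$ into $r$ groups $W_1,\ldots,W_r$ with $\sum_{j\in W_i} m \approx n_i - \Theta(\mu n)$, and inside each $R[W_i]$ a \emph{template loose path} $T_i$ whose length has the parity dictated by $C_{n_i}$. The codegree bound is precisely tuned for this: an odd-length loose cycle forces its template to use cluster-triples that must escape any underlying $3$-partition structure of $R$, and this is exactly the obstruction the extra $2k/(4n)$ term neutralises; tightness is witnessed by a natural $3$-partite extremal construction. With the templates in hand, we apply the transversal blow-up lemma of Cheng--Staden to the family $T_1,\ldots,T_r$, producing vertex-disjoint loose paths $P_1,\ldots,P_r \subseteq \mathcal{H}$ that between them span all clusters. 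The transversal formulation is essential: the $r$ templates share the host cluster partition and each cluster is used with a prescribed multiplicity across the templates, forcing a coordinated embedding.

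Finally, we connect each $P_i$ to the corresponding absorbing path $A_i$ via a short loose path through $R_0$, forming $r$ vertex-disjoint loose cycles of near-correct lengths, and then use the absorbing property of each $A_i$ to swallow the $o(n)$ uncovered vertices, adjusting each cycle to the exact prescribed length $n_i$. The main obstacle is the template step: proving that under only a codegree condition the reduced $3$-graph $R$ admits an $r$-part partition carrying loose-path templates of prescribed parities and sizes. This is an El-Zahar-type problem on $R$ itself---the extremes $r=1$ (Hamilton cycle) and uniform $n_i$ (factor) are handled by Theorems \ref{ko} and \ref{MYC} respectively, but mixed lengths and parities seem to demand new combinatorial arguments, and the parity handling for the $k$ odd cycles, with its sharp $2k$ correction, is the subtle point where the El-Zahar flavour of the result really enters.
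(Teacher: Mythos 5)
Your plan has a genuine gap at its core, and you essentially acknowledge it: the ``template step'' --- finding in the reduced $3$-graph an $r$-part partition carrying loose-path templates of prescribed sizes and parities under only the codegree condition --- is exactly the El-Zahar-type difficulty the theorem is about, and you do not give an argument for it. The paper does not solve this problem on the reduced graph at all. Instead it first reduces (by random partition and iteration, in the appendix) to the case where every cycle has at most $\log n$ vertices, then uses Mycroft's theorem to find an almost-perfect $\mathcal{A}^3_{a,2a+b}$-tiling of the reduced hypergraph, where the ratio $b/a$ is chosen as a function of $k/n$ (the $(n,k,\eta)$-good condition). Each tile is sliced into half-superregular triples $(X_i,Y_i,Z_i)$ with $|X_i|:|Y_i|:|Z_i|=a:a:b$; the cycles are then distributed among the triples by a balancing argument, and inside each triple the assigned cycles are embedded by combining a purely $2$-graph lemma (a tripartite El-Zahar statement for complete tripartite graphs with minimum degree $n/2+k/2$, proved via explicit proper $(a,b,c)$-colourings of cycles) with the transversal blow-up lemma. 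In this scheme the sharp $2k$ correction enters through the ratio $a:b$ and the vertex-cover count of odd loose cycles, not through any parity bookkeeping on cluster-triples of $R$; so the ``subtle point'' you defer is replaced by concrete, already-available tools.

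A second, independent problem is your absorption framework. The number of cycles $r$ can be as large as $n/6$ (all cycles of order $6$), so you cannot build one absorbing loose path $A_i$ of length $\Theta(\mu n)$ per target cycle: the absorbers alone would occupy $r\cdot\Theta(\mu n)\gg n$ vertices, and a constant-order cycle $C_{n_i}$ cannot contain a path of length $\Theta(\mu n)$ in the first place. Any repair would have to either absorb globally (which conflicts with the requirement that leftover vertices be assigned to specific cycles of exact prescribed lengths and parities) or restrict absorption to the few long cycles --- which is, in effect, what the paper does differently: long cycles (length $>\log n$) are split off by random partition, and the leftover/exceptional vertices are consumed greedily by short cycles using the codegree condition (the common-neighbour claim), with all exactness handled inside the triples by the blow-up step rather than by absorbers.
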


Note that the result of K\"{u}hn and Osthus \cite{kuhn2006loose} for loose Hamilton cycles corresponds to the special case $r=1$. The bound in Theorem \ref{MYC} is obtained by taking $n_1=n_2=\cdots=n_r=n/s$, with an error term of $o(n)$.
The bound in Theorem \ref{oddandeven} is asymptotically tight, which follows from the extremal construction in the following proposition.

\begin{proposition} \label{extremal}
Let $n,k$ be positive integers. Suppose that $\mathcal{C}$ is a 3-graph consisting of vertex-disjoint loose cycles $C_{n_1}, C_{n_2}, \ldots, C_{n_r}$ with $\sum_{i=1}^{r}n_i=n$, and let $k$ be the number of loose cycles with odd lengths. There exists a 3-graph $\mathcal{H}$ with $n$ vertices and minimum codegree $\lfloor\frac{n+2k}{4}\rfloor-1$ that does not contain $\mathcal{C}$ as a spanning subhypergraph. \end{proposition}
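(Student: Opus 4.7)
The plan is to build $\mathcal{H}$ from a \emph{vertex cover barrier}. Partition an $n$-element vertex set as $V = A \sqcup B$ with $|A| = \lfloor (n+2k)/4 \rfloor - 1$, and declare $\mathcal{H}$ to consist of every triple that meets $A$. For any pair $\{u,v\}$ with $u\in A$ or $v\in A$, every third vertex extends it to an edge, giving codegree $n-2$; for a pair $\{u,v\}\subseteq B$, the third vertex must lie in $A$, giving codegree $|A|$. Hence $\delta_2(\mathcal{H}) = |A| = \lfloor (n+2k)/4 \rfloor - 1$, provided $|B|\ge 2$, which holds whenever $n$ is reasonably large.

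The remaining task is to rule out $\mathcal{C}$ as a spanning subhypergraph of $\mathcal{H}$. If $\mathcal{H}$ contained such a copy, every hyperedge of $\mathcal{C}$ would meet $A$, so the preimage of $A$ would be a vertex cover of $\mathcal{C}$ of size $|A|$. It therefore suffices to compute the vertex cover number of $\mathcal{C}$ and check that it strictly exceeds $|A|$. For a single loose cycle $C_s$ with $s=2m$, I claim this number equals $\lceil m/2 \rceil = \lceil s/4 \rceil$. The lower bound is a simple double count: each vertex lies in at most two hyperedges (shared vertices lie in two, middle vertices in one), so any cover $S$ of the $m$ hyperedges satisfies $2|S|\ge m$, giving $|S|\ge \lceil m/2\rceil$. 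For the matching upper bound, write the cyclic vertex order as $v_0, \ldots, v_{s-1}$ with hyperedges $e_i=\{v_{2i},v_{2i+1},v_{2i+2}\}$ (indices mod $m$). Then $v_{4j}$ covers exactly $e_{2j-1}$ and $e_{2j}$, and so the set $\{v_{4j} : 0 \le j < m/2\}$ when $m$ is even, respectively $\{v_{4j} : 0 \le j \le (m-1)/2\}$ when $m$ is odd, exhausts $e_0,\ldots,e_{m-1}$ and has the desired size $\lceil m/2 \rceil$.

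Summing over the components of $\mathcal{C}$, the vertex cover number is $\sum_{i=1}^{r}\lceil n_i/4\rceil$. The $r-k$ cycles with $n_i\equiv 0 \pmod 4$ each contribute $n_i/4$, while the $k$ cycles with $n_i\equiv 2 \pmod 4$ each contribute $(n_i+2)/4$, for a total of $n/4 + k/2 = (n+2k)/4$. Since $n\equiv 2k \pmod 4$, this value is an integer, so $\lfloor (n+2k)/4\rfloor = (n+2k)/4$, and consequently $|A| = (n+2k)/4 - 1$ is strictly less than the vertex cover number of $\mathcal{C}$. This contradicts the assumed existence of a spanning copy of $\mathcal{C}$ in $\mathcal{H}$, and the construction is complete.

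The only step requiring genuine care is the vertex cover computation for loose cycles of odd length, where the alternating-shared-vertex construction closes up with one extra vertex; once that is in hand, the rest is routine bookkeeping, and I do not foresee any substantive obstacle.
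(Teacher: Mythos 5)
Your proposal is correct and follows essentially the same route as the paper: the same construction (all triples meeting a set $A$ of size $\lfloor(n+2k)/4\rfloor-1$) combined with the observation that a spanning copy of $\mathcal{C}$ would force $A$ to be a vertex cover of $\mathcal{C}$, whose cover number is $(n+2k)/4$. The only difference is that you explicitly verify the vertex cover number $\lceil n_i/4\rceil$ of each loose cycle, which the paper simply asserts.
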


\begin{proof} 
We construct the 3-graph $\mathcal{H}$ as follows.
The vertex set of $\mathcal{H}$ is the disjoint union of two sets $A$ and $B$, where $|A|=\lfloor\frac{n+2k}{4}\rfloor-1$ and $|B|=n-|A|$. 
$\mathcal{H}$ contains exactly those hyperedges whose intersection with $A$ is nonempty. 
For any cycle $C_{n_i}$, if it is an even cycle, then its vertex cover number is $n_i/4$, and if it is an odd cycle, then its vertex cover number is $(n_i+2)/4$. Thus, the vertex cover number of $\mathcal{C}$ is $(n+2k)/4$. 
This leads to a contradiction since $A$ is a vertex cover of $\mathcal{H}$. Consequently, $\mathcal{H}$ does not contain $\mathcal{C}$ as a spanning subhypergraph.
\end{proof}

This paper is organized as follows.
We end this section with a proof sketch of the main theorem.
In Section \ref{ES}, we give the necessary definitions and the probabilistic tool.
Section \ref{section-preliminaries} collects the hypergraph regularity method, and the transversal blow-up lemma used in our proof.
In section \ref{section-proof}, we complete the proof of Theorem \ref{oddandeven}.
Section \ref{concluding} contains some open problems.

\textbf{Proof sketch of the main theorem.}\label{sketch}
The proof of Theorem \ref{oddandeven} uses the weak hypergraph regularity method and the blow-up lemma for transversals. We first reduce the problem to the case where the length of each loose cycle is at most $O (\log n)$ through the proof in Appendix \ref{appendix}, and divide the remaining proof into the following three steps.

\textbf{Step 1:} Apply the weak hypergraph regularity lemma to obtain the reduced hypergraph and find an almost perfect tiling of some defined $3$-graph in the reduced hypergraph. Next, we recover each component of this tiling back to $\mathcal{H}$, and perform a random slicing to obtain some regular triples $T(X, Y, Z)$ that cover almost all of the vertex set, where the sizes of $X$, $Y$, and $Z$ satisfy a specific proportional relationship. Using the slicing lemma, we can ensure that these triples are half-superregular.

\textbf{Step 2:} Embed some cycles from the list to use up the vertices not contained in any of the triples. Embed more cycles in the list to consume some of the vertices in the triples such that each tripe has only been consumed a little fraction of vertices, and the remaining cycles to be embedded could be partitioned into pieces that allows us to build an injection from the piece set to the triple set.

\textbf{Step 3:} For each triple, apply the transversal blow-up lemma to embed the set of cycles from the corresponding piece.

\section{Preliminaries}\label{ES}
\subsection{Definitions and notations}
We simply refer to “graphs” when we consider simple, undirected and finite 2-graphs.
Given a graph $G$,
a 1\emph{-expansion} $G^+$ of $G$ is a $3$-graph obtained by replacing every edge $e=\{x,y\}\in E(G)$ with a hyperedge $(xyt_{e})$, where all the vertices $t_e (e\in E(G))$ are distinct and they are disjoint from $V(G)$.
We call these added vertices $t_e$ $(e\in E(G))$ \emph{new vertices} of the 3-graph $G^+$. 
Note that a \emph{loose cycle} is a $1$-expansion of a cycle and a \emph{loose path} is a $1$-expansion of a path.
We use $P_t$ to denote a loose path on $t$ vertices. 
Once the order of $P_t$ is determined, we call a vertex that lies in its first hyperedge but not in the second one the \emph{endpoints} of $P_t$. 
The \emph{length} of $P_t$ (or $C_t$), denoted by $\ell(P_t)$ (or $\ell(C_t)$), is the number of its hyperedges. 
A 3-graph on vertex sets $V_1, V_2$ and $V_3$ is called a 3-\emph{partite} 3-\emph{graph} if each hyperedge contains exactly one vertices of $V_i$ for each $i\in[3]$.
Let $\mathcal{H}$ be a $3$-graph, we use $|\mathcal{H}|$ to denote the order of $\mathcal{H}$ and $e(\mathcal{H})$ to denote the number of hyperedges of $\mathcal{H}$. 
For $S\subseteq V(\mathcal{H})$, we use $\mathcal{H}[S]$ to denote the subgraph of $\mathcal{H}$ induced by $S$.
For $u, v\in V(\mathcal{H})$, let $N_{S}(u):=\{\{v,w\}\in \binom{S}{2}: (uvw)\in E(\mathcal{H})\}$ and $N_{S}(u,v):=\{w\in S: (uvw)\in E(\mathcal{H})\}$.


Throughout this paper, we use $\log$ to denote the logarithm with base 2. 
For a set $X$, we set $\binom{X}{k}:=\{A\subset X: |A|=k\}$. 
In addition, we write $[t]:=\{1, 2, ...,t\}$ for any integer $t\geq1$.
For any two constants $a,b \in (0, 1)$, we write $a \ll b$ if there exists a function $a_0 = a_0(b)$ such that the subsequent arguments hold for all $0 < a \leq a_0$.
In order to simplify the presentation, we omit floors and ceilings and treat large numbers as integers whenever this does not affect the argument.

For two $k$-graphs $\mathcal{H}_1$ and $\mathcal{H}_2$, an \emph{embedding} from $\mathcal{H}_1$ into $\mathcal{H}_2$ is an injective edge-preserving mapping $\psi: V( \mathcal{H}_1)\to V(\mathcal{H}_2)$. Given a such mapping $\psi$, we write $\psi (X):=\{\psi (x): x\in X\}$ for $X\subseteq V(\mathcal{H}_1)$. Moreover, if $|\mathcal{H}_1|=|\mathcal{H}_2|$, we say the embedding is a \emph{perfect embedding}.

Now we present a key definition that will be essential for the proof.

\begin{definition}
Let $n, k$ be positive integers with $k<n$, and let $\eta$ be a constant satisfying $0<\eta<1$. 
Two positive integers $a, b$ are called $(n, k, \eta)$-good if the following holds.
\begin{enumerate}[$(i)$]
    \item $\frac{2n-4k-0.08\eta n}{n+2k+0.04\eta n}\leq\frac{b}{a}\leq \frac{2n-4k}{n+2k}-\eta^2$;
    \item $a,b\leq \lceil\frac{200}{\eta}\rceil$.
\end{enumerate}
\end{definition}

\begin{fact}\label{goodrange}
Let $n, k$ be two positive integers with $k<n$ and $\eta$ be a constant with $1/n\ll\eta\ll 1$. For $a=\lceil 100/\eta \rceil$, there exists a positive integer $b$ with $1\leq b\leq \lceil 200/\eta \rceil$ such that $a,b$ are $(n, k, \eta)$-good.
\end{fact}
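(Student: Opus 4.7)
Set $L := \frac{2n-4k-0.08\eta n}{n+2k+0.04\eta n}$ and $U := \frac{2n-4k}{n+2k}-\eta^2$. Condition $(i)$ in the definition of $(n,k,\eta)$-good asks for an integer $b$ with $aL \le b \le aU$, while the bound on $a$ in $(ii)$ is immediate from $a=\lceil 100/\eta \rceil \le \lceil 200/\eta \rceil$. So the task reduces to producing an integer $b \in [aL, aU]$ satisfying $1 \le b \le \lceil 200/\eta \rceil$.

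The heart of the argument is a lower bound $U - L \ge \Omega(\eta)$. Writing $U + \eta^2 = \frac{2n-4k}{n+2k}$ and combining $U + \eta^2 - L$ over a common denominator gives the short calculation
\[
U+\eta^2-L=\frac{(2n-4k)\cdot 0.04\eta n+0.08\eta n\cdot (n+2k)}{(n+2k)(n+2k+0.04\eta n)}=\frac{0.16\,\eta n^2}{(n+2k)(n+2k+0.04\eta n)}.
\]
Since $k < n$, the denominator is at most roughly $9.12\,n^2$, so $U + \eta^2 - L \ge 0.017\,\eta$, whence $U - L \ge 0.016\,\eta$ for $\eta$ sufficiently small (so that $\eta^2 \ll \eta$).

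Consequently the interval $[aL, aU]$ has length at least $a \cdot 0.016\,\eta \ge 100 \cdot 0.016 = 1.6 > 1$, so it contains an integer; I would choose $b := \lceil aL \rceil$. The upper bound on $b$ then follows from the identity $L = 2 - \frac{8k+0.16\eta n}{n+2k+0.04\eta n}$, which yields $L \le 2 - c\,\eta$ for an explicit $c > 0$ and hence $b \le aL + 1 \le 2a - 100c + 1 \le \lceil 200/\eta \rceil$ with several units of slack. Positivity $b \ge 1$ reduces to $L > 0$, i.e.\ $k < n(2 - 0.08\eta)/4$, which is comfortably satisfied in the intended regime where each loose cycle has at least $6$ vertices and so $k \le n/6$.

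The essential content is thus the one-line algebraic lower bound on $U - L$; once that is in hand, everything else is constant-chasing. I expect no conceptual difficulty, only the mild book-keeping needed to verify both endpoints $1 \le b$ and $b \le \lceil 200/\eta \rceil$ simultaneously, which the computation above handles with room to spare.
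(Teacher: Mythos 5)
Your proposal is correct and follows essentially the same route as the paper: both lower-bound the gap $U-L$ by a constant multiple of $\eta$ (you compute the difference exactly, obtaining roughly $0.016\eta$, while the paper chains inequalities to get $0.02\eta$), deduce that the interval $[aL,aU]$ has length greater than $1$ and hence contains an integer $b$, and bound $b\le\lceil 200/\eta\rceil$ using $U<2$ (respectively your $L\le 2-c\eta$). Your caveat that positivity of $b$ requires $k<n(2-0.08\eta)/4$ rather than merely $k<n$ is a gap the paper's proof silently shares, and is harmless since the fact is only invoked in the regime $k\le n/6$.
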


\begin{proof}
Note that 
\[ 
\frac{2n - 4k}{n + 2k} - \eta^2 - \frac{2n - 4k -0.0 8\eta n}{n + 2k +0.0 4\eta n}\geq \frac{0.08\eta n}{n + 2k +0.0 4\eta n} - \eta^2\geq \frac{0.08\eta}{3 + 0.04\eta} - \eta^2 \geq 0.02\eta.
\]

Since $\frac{2n-4k}{n+2k}-\eta^2-\frac{2n-4k-0.08\eta n}{n+2k+0.04\eta n}\geq 0.02\eta$, there exists a positive integer $b>0$ such that $b/a\in [\frac{2n-4k-0.08\eta n}{n+2k+0.04\eta n}, \frac{2n-4k}{n+2k}-\eta^2].$ Moreover, $1\leq b\leq a(\frac{2n-4k}{n+2k}-\eta^2)\leq \lceil\frac{200}{\eta}\rceil$.
\end{proof}

\subsection{Basic probabilistic estimates}
In this subsection, we will introduce a probabilistic tool given by K\"{u}hn and Osthus \cite{kuhn2006loose}.
Given a positive number $\gamma$, set $T$ and sets $A,Q\subseteq T$, we say that $A$ is \emph{split $\gamma$-fairly} by $Q$ if:
$$
|\frac{|A\cap Q|}{|Q|}-\frac{|A|}{|T|}|\leqslant\gamma.
$$

Thus, if $\gamma$ is small and $A$ is split $\gamma$-fairly by $Q$, then the proportion of all those elements of $T$ which lie in $A$ is almost equal to the proportion of all those elements of $Q$ which lie in $A$. The following fact implies that if $Q$ is random, it tends to split large sets $\gamma$-fairly.

\begin{lemma}[\cite{kuhn2006loose}, Proposition 4.1]\label{fairly}
For each $0<\gamma<1$, there exists an integer $q_0=q_0(\gamma)$ such that the following holds. Given $t\geqslant q\geqslant q_0$ and a set $T$ of size $t$, let $Q$ be a subset of $T$ obtained by successively selecting $q$ elements uniformly at random without repetitions. Let $\mathcal{A}$ be a family of at most $q^{10}$ subsets of $T$ such that $|A|\geqslant\gamma t$ for each $A\in\mathcal{A}$. Then, with probability at least $1/2$, every set in $\mathcal{A}$ is split $\gamma$-fairly by $Q$.
\end{lemma}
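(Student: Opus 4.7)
The plan is to prove this by a standard concentration-plus-union-bound argument. Fix any $A \in \mathcal{A}$ and let $X_A = |A \cap Q|$ count how many of the $q$ randomly chosen elements of $T$ land in $A$. Since $Q$ is obtained by sampling $q$ elements uniformly without replacement from $T$, the variable $X_A$ is hypergeometrically distributed with mean $\mu_A = q|A|/t$. Unwinding the definition of ``split $\gamma$-fairly'' and multiplying through by $q = |Q|$, the failure event for the set $A$ is precisely $\{|X_A - \mu_A| > \gamma q\}$.

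Next I would apply Hoeffding's concentration inequality for sampling without replacement (which yields the same subgaussian tail as for independent Bernoulli trials, by Hoeffding's reduction theorem) to obtain
\[
\Pr\bigl[|X_A - \mu_A| > \gamma q\bigr] \leq 2\exp(-2\gamma^2 q).
\]
Note that the lower bound $|A| \geq \gamma t$ is not actually used in this concentration step; it appears in the hypothesis because the notion of being split $\gamma$-fairly is only informative when $|A|/t$ is not much smaller than $\gamma$. A union bound over the at most $q^{10}$ sets in $\mathcal{A}$ gives
\[
\Pr\bigl[\text{some } A \in \mathcal{A} \text{ is not split $\gamma$-fairly by } Q\bigr] \leq 2 q^{10} \exp(-2\gamma^2 q),
\]
and I would then choose $q_0 = q_0(\gamma)$ large enough that the right-hand side is at most $1/2$ whenever $q \geq q_0$. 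This is possible because $\exp(-2\gamma^2 q)$ decays exponentially in $q$ while $q^{10}$ grows only polynomially, so any $q_0$ of order $\gamma^{-2}\log(1/\gamma)$ (with a suitable constant) suffices.

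There is no serious obstacle. The only step deserving care is the Hoeffding-type tail bound in the without-replacement setting; if one prefers to avoid invoking Hoeffding's reduction theorem directly, one can instead view the sample as the first $q$ coordinates of a uniformly random permutation of $T$ and apply McDiarmid's bounded-differences inequality to the function counting how many of those coordinates lie in $A$, which has Lipschitz constant $1$ in each coordinate under an adjacent-transposition metric. Either route delivers the same exponential tail, after which the union bound and the choice of $q_0$ are immediate.
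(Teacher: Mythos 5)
This lemma is stated in the paper as a quoted result (Proposition~4.1 of K\"uhn--Osthus) with no proof given, so there is no internal argument to compare against; your proof is correct and is essentially the standard one: the failure event for a fixed $A$ is exactly $\{|X_A-\mu_A|>\gamma q\}$ for a hypergeometric variable, Hoeffding's inequality for sampling without replacement gives the tail $2\exp(-2\gamma^2 q)$, and the union bound over at most $q^{10}$ sets is beaten once $q\geq q_0(\gamma)$. The only tiny caveat is in your closing remark on the size of $q_0$: since $\log(1/\gamma)$ can be arbitrarily small when $\gamma$ is close to $1$, one should take $q_0$ of order $\gamma^{-2}\log(1/\gamma)$ plus an absolute constant (or simply ``sufficiently large in terms of $\gamma$''), which is all the lemma asserts.
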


\setcounter{case}{0}
\subsection{Embedding cycles in tripartite graph}\label{graphcase} 
In this section we give a proof regarding embedding cycles in tripartite graph.
\begin{lemma}\label{colorcycle}
Let $n$ be a positive integer. For any cycle $C$ of length $n$ and any integers $a,b,c$ satisfying $a+b+c=n$ and $0\leq a\leq b\leq c\leq\lfloor n/2\rfloor$, there exists a proper $(a,b,c)$-coloring of $C$, i.e., $C$ has a proper vertex coloring such that the three monochromatic parts have sizes $a,b$ and $c$.
\end{lemma}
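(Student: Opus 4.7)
The plan is to build the coloring in two stages: first place the $c$ vertices of color $3$ as an independent set on $C$, then properly $2$-color the complementary arcs with colors $1$ and $2$ to hit the target counts. In the degenerate case $a = 0$, the hypotheses force $b = c = n/2$ with $n$ even, and the alternating $2$-coloring with colors $2$ and $3$ suffices; so assume $a \ge 1$.

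Choose $c$ pairwise non-adjacent positions on $C$ for color $3$ (possible since $\lfloor n/2 \rfloor$ is the independence number of $C_n$); the remaining $n - c$ vertices form $c$ arcs of sizes $g_1, \ldots, g_c \ge 1$ with $\sum_i g_i = n - c$. Color each arc alternately with colors $1$ and $2$: an arc of length $g$ uses $\lfloor g/2 \rfloor$ of one color and $\lceil g/2 \rceil$ of the other, with the majority color choice free whenever $g$ is odd. Writing $f$ for the number of odd-length arcs, a short count shows that the achievable number of color-$1$ vertices is exactly the integers in $\left[(n - c - f)/2,\ (n - c + f)/2\right]$, so the target $a$ is realizable iff $f \ge |2a - (n - c)| = b - a$, where the last equality uses $a + b = n - c$ and $a \le b$.

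It remains to choose the gap sizes so that $f \ge b - a$. A direct count shows that the feasible values of $f$ are exactly $\max(0,\ 3c - n) \le f \le c$ with $f \equiv n - c \pmod 2$ (take $f$ gaps of size $1$ and $c - f$ of size $2$, then distribute the even surplus $n - 3c + f$ in increments of $2$). Since $b - a \le c$ and $b - a \equiv n - c \pmod 2$ both follow from $a \le b$ and $a + b = n - c$, a valid $f$ always exists. The main (mild) obstacle throughout is this parity accounting between $f$, $n$, and $c$; the only boundary case needing a separate look is $c = n/2$, which forces every gap to have size exactly $1$, but then each color-$3$ vertex is isolated and the $n/2$ singleton arcs admit any independent choice of $1$'s and $2$'s, so the desired split is immediate.
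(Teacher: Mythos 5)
Your proof is correct, and it takes a genuinely different route from the paper's. The paper fixes a concrete placement from the start: it puts the $c$ vertices of the largest colour on odd positions $1,3,\dots,2c-1$ of $\mathbb{Z}_n$ and then distributes the remaining two colours over the leftover odd and the even positions, resolving the possible same-colour adjacencies by a short case analysis. You instead treat the placement of the colour-$c$ class as a free parameter: you characterize exactly which gap structures (number $f$ of odd-length arcs, with $\max(0,3c-n)\le f\le c$ and $f\equiv n-c\pmod 2$) are realizable, observe that alternately $2$-colouring the arcs achieves precisely the colour-$1$ counts in $\bigl[(n-c-f)/2,(n-c+f)/2\bigr]$, and reduce the lemma to the inequality $f\ge b-a$. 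This buys a cleaner, more quantitative argument (it actually describes all achievable splits for a given $f$) at the cost of some parity bookkeeping, whereas the paper's argument is shorter but more ad hoc. One small point: in your last step you justify the existence of a valid $f$ only by $b-a\le c$ and $b-a\equiv n-c\pmod 2$, which ignores the lower bound $\max(0,3c-n)$ of the feasible range; the fix is one line — if $b-a\ge 3c-n$ take $f=b-a$, and otherwise take $f=3c-n$, which has the right parity, lies in $[\max(0,3c-n),c]$ since $c\le\lfloor n/2\rfloor$, and still satisfies $f\ge b-a$. Also, as you note, the boundary case $c=n/2$ needs no separate treatment, since then $f=c\ge b-a$ automatically.
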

\begin{proof}
Without loss of generality, suppose that the vertex set of $C$ is $\mathbb{Z}_n=\mathbb{Z}/n\mathbb{Z}$ and the edge set of $C$ is $\{(i,i+1) \mid i\in \mathbb{Z}_n\}$. We partition the vertex set $V(C)$ into two parts $A$ and $B$, where $A=\{i\in \mathbb{Z}_n \mid 1\leq i\leq n, i\ \text{is odd}\}$ and $B=\{i\in \mathbb{Z}_n \mid 1\leq i\leq n, i\ \text{is even}\}$. 
We will color the vertices of $C$ with red, blue and green as follows. 
Note that $|A|=\lceil n/2\rceil$ and $c\leq\lfloor n/2\rfloor$, thus we have $\{1,3, \ldots, 2c-1\}\subseteq A$ and these vertices are colored red. 
Let $A'=A\backslash \{1,3, \ldots, 2c-1\}$. 
If $|A'|\geq b$, then we have $|B|=n-|A|=n-c-|A'|\leq n-c-b=a$.
Since $|B|=\lfloor n/2\rfloor$ and $a\leq\lfloor n/2\rfloor$, we know that $a=\lfloor n/2 \rfloor$ and $|A'|=b$. 
We color all vertices in $A'$ blue and color all vertices in $B$ green. 
It is easy to check that this vertex coloring is proper.

In the following, we may suppose that $|A'|<b$. 
We color all vertices in $A'$ and the set $\{2,4,\ldots,2(b-|A'|)\}$ blue, and color the rest of the vertices in $B$ green. 
We claim that this vertex coloring of $C$ is proper.
It's easy to see that adjacent vertices colored the same color could only exist in the blue vertex set.  This implies that $2(b-|A'|)\geq 2c$. 
Recall that $|A'|=\lceil n/2 \rceil-c$, we obtain $b\geq \lceil n/2 \rceil$. Thus, we have $b=c=n/2$ and $n$ is even by our assumption. 
If $A'$ is empty, then $n$ is even and $c=n/2$.
We color the vertices in $A$ red and color the set $\{2,4,\ldots,2b\}$ blue. The rest of the vertices $B$ are colored green. Clearly, this vertex coloring is proper, as desired.
\end{proof}

\begin{lemma}\label{step3graph}
Let $n_1, n_2, \ldots, n_r$ and $k$ be positive integers. Let $F$ be a graph consisting of $r$ vertex-disjoint cycles of lengths $n_1, n_2, \ldots, n_r$ with $n_1+n_2+\cdots+n_r=n$ and the number of odd cycles in $F$ be $k$.
Suppose that $H$ is an $n$-vertex complete tripartite graph on vertex sets $V_1, V_2$ and $V_3$. 
If $H$ has minimum degree at least $\sum_{i=1}^{r}\lceil n_i/2\rceil=n/2+k/2$, then $H$ contains $F$ as a subgraph.
\end{lemma}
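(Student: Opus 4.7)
The plan is to reduce the embedding problem to a coloring problem for each individual cycle. Since $H$ is complete tripartite on $V_1, V_2, V_3$, any vertex map $\phi\colon V(F)\to V(H)$ that induces, on every $C_{n_i}$, a proper 3-coloring with color classes lying in the three parts is automatically an embedding of $F$ into $H$. Hence it suffices to find, for each $i\in[r]$, non-negative integers $a_i,b_i,c_i$ with $a_i+b_i+c_i=n_i$ and $\max(a_i,b_i,c_i)\leq\lfloor n_i/2\rfloor$ such that $\sum_i a_i=|V_1|$, $\sum_i b_i=|V_2|$, $\sum_i c_i=|V_3|$. Lemma \ref{colorcycle} then produces the corresponding proper 3-coloring of $C_{n_i}$ (after sorting each triple), and we map each color class injectively into the appropriate $V_j$.

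Assume $|V_1|\leq|V_2|\leq|V_3|$ without loss of generality. The minimum-degree hypothesis $n-|V_3|\geq(n+k)/2$ gives $|V_3|\leq(n-k)/2$; combined with $|V_1|+|V_2|+|V_3|=n$ and $|V_2|\leq|V_3|$, this forces $|V_1|\geq n-2|V_3|\geq k$. Since $\sum_i\lfloor n_i/2\rfloor=(n-k)/2$, each $|V_j|$ lies in the interval $[k,(n-k)/2]$.

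The core step is to realise $(|V_1|,|V_2|,|V_3|)$ as the column sums of a non-negative integer $r\times 3$ matrix with row sums $n_i$ and entries in row $i$ bounded by $\lfloor n_i/2\rfloor$. This is a transportation-polytope feasibility statement. I would prove it by noting that the set of valid triples for a single $C_{n_i}$ is a 2-dimensional triangle inside the plane $a+b+c=n_i$ whose extreme points are the permutations of $(\lfloor n_i/2\rfloor,\lfloor n_i/2\rfloor,n_i-2\lfloor n_i/2\rfloor)$. The Minkowski sum of these triangles over all $i$ is therefore the triangle whose extreme points are the permutations of $((n-k)/2,(n-k)/2,k)$, and the inequalities $k\leq|V_j|\leq(n-k)/2$ that we just verified place $(|V_1|,|V_2|,|V_3|)$ inside this Minkowski sum. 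Integrality of a solution follows from the integrality of transportation polytopes with integer data, or equivalently from an explicit greedy argument that starts from the ``default'' assignment $(n_i/2,n_i/2,0)$ for even $n_i$ and $((n_i-1)/2,(n_i-1)/2,1)$ for odd $n_i$ and repeatedly swaps one unit between two colors inside a single cycle to move toward the target column sums.

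The main obstacle is this feasibility step: one must verify that the prescribed column sums can be met simultaneously by integer triples inside the per-cycle polytopes. The lower bound $|V_j|\geq k$ is indispensable and corresponds exactly to ensuring that each part of $H$ can absorb the forced ``odd-vertex'' contribution from each odd cycle (which has chromatic number three), while the upper bound $|V_j|\leq(n-k)/2$ matches the total per-color slack $\sum_i\lfloor n_i/2\rfloor$. Once the triples are constructed, assembling the explicit colorings from Lemma \ref{colorcycle} into a single embedding of $F$ into $H$ is immediate.
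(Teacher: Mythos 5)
Your proposal is correct, but it takes a genuinely different route from the paper. The paper also reduces everything to Lemma \ref{colorcycle} and to the observation that the degree hypothesis is equivalent to $k\leq|V_1|\leq|V_2|\leq|V_3|\leq(n-k)/2$, but it then argues by lexicographic induction on $(|V_1|,|V_2|,|V_3|)$: the base case $(k,(n-k)/2,(n-k)/2)$ is handled by giving every cycle a $(0,\lfloor|C|/2\rfloor,\lfloor|C|/2\rfloor)$- or $(1,\lfloor|C|/2\rfloor,\lfloor|C|/2\rfloor)$-coloring, and an exchange claim (with a two-case analysis) shows that shifting one vertex from part $V_j$ to part $V_i$ can always be absorbed by re-coloring one or two cycles via Lemma \ref{colorcycle}. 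You instead solve the problem in one shot: you phrase it as finding an integer $r\times 3$ matrix with row sums $n_i$, entries in row $i$ bounded by $\lfloor n_i/2\rfloor$, and column sums $(|V_1|,|V_2|,|V_3|)$, establish real feasibility by the Minkowski-sum picture, and get integrality from the capacitated transportation polytope. This buys a cleaner, case-free existence argument (your parenthetical greedy alternative is essentially the paper's induction in disguise), at the price of invoking standard polyhedral facts; the paper's proof is longer but entirely elementary. Two points in your write-up deserve one line of justification each: the statement that the Minkowski sum of the per-cycle triangles is again a triangle with the summed vertices is not automatic for arbitrary triangles (a Minkowski sum of triangles is generally a hexagon) — it holds here because every per-cycle triangle is a positive homothet of the triangle with vertices $(1,1,0),(1,0,1),(0,1,1)$, namely $s_i(1,1,1)+(\lfloor n_i/2\rfloor-s_i)\Delta$ with $s_i=n_i-2\lfloor n_i/2\rfloor$; and the integrality step should cite total unimodularity (or Gale--Hoffman) for transportation constraints with integer capacities, which indeed survives adding the box constraints $x_{ij}\leq\lfloor n_i/2\rfloor$. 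With those two sentences added, your argument is complete and, if anything, tidier than the paper's.
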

\begin{proof}
\setcounter{case}{0}

Without loss of generality, assume that $|V_1| \leq |V_2| \leq |V_3|$. Note that $n - k$ is even since $n$ and $k$ have the same parity. Owing to the minimum degree condition of $H$ and the fact that $n\geq3k$, we have $k\leq(n+k)/4\leq |V_1| \leq |V_2| \leq |V_3| \leq (n - k)/2$. Besides, $k\leq |V_1| \leq |V_2| \leq |V_3|\leq (n-k)/2$ implies the minimum degree is no less than $n/2+k/2$. Threrfore, the minimum degree condition is equivalent to requiring $k\leq |V_1| \leq |V_2| \leq |V_3|\leq (n-k)/2$.
We proceed by induction on the triple $(|V_1|, |V_2|, |V_3|)$ under lexicographic order. 

\noindent\textbf{Base case}: Suppose $|V_1| = k$ and $|V_2| = |V_3| = (n-k)/2$. For any cycle $C \in F$, by Lemma~\ref{colorcycle}, $C$ admits a $(0, \lfloor |C|/2 \rfloor, \lfloor |C|/2 \rfloor)$-coloring if $|C|$ is even, and a $(1, \lfloor |C|/2 \rfloor, \lfloor |C|/2 \rfloor)$-coloring if $|C|$ is odd. Denote the three parts in the coloring by $D_1, D_2, D_3$, and embed them into $V_1, V_2, V_3$, respectively. Applying this embedding for each $C \in F$ yields an embedding of $F$ into $H$.
\begin{claim}
Let $e_{12} = (1, -1, 0)$, $e_{13} = (1, 0, -1)$, and $e_{23} = (0, 1, -1)$. For any integers $a, b, c$, define $(a(ij), b(ij), c(ij)) = (a, b, c) + e_{ij}$. Suppose that $H$ contains $F$ as a subgraph when $(|V_1|, |V_2|, |V_3|) = (a, b, c)$. Then, for any $i, j \in [3]$ with $i < j$, if $(a(ij), b(ij), c(ij))$ satisfies $a(ij) \leq b(ij) \leq c(ij)$, it follows that $H$ also contains $F$ as a subgraph when $(|V_1|, |V_2|, |V_3|) = (a(ij), b(ij), c(ij))$.
\end{claim}
\begin{proof}
Since $a(ij) \leq b(ij) \leq c(ij)$, when $(|V_1|, |V_2|, |V_3|) = (a, b, c)$, it follows that $|V_j| - |V_i| \geq 2$. Thus, when we embed $F$ into $H$ in this case and color all vertices in $V_i$ with color $c_i$ for all $i \in [3]$, one of the two cases holds:
\begin{case}
There exists at least one cycle $C$ such that the number of its vertices colored by $c_j$ is at least two more than those colored by $c_i$.
\end{case}
Denote by $x_k$ the number of vertices in $C$ colored with color $c_k$ for each $k \in [3]$. Then we have $x_j \geq x_i + 2$. Let $h$ be an integer satisfying $\{h\} = \{1, 2, 3\} \setminus \{i, j\}$. Since $(x_i, x_j, x_h)$ is a proper coloring of $C$, by Lemma~\ref{colorcycle}, $(x_i + 1, x_j - 1, x_h)$ is also a proper coloring of $C$. Denote the three parts in the ordering by $D_i, D_j, D_h$. Thus, when $(|V_1|, |V_2|, |V_3|) = (a(ij), b(ij), c(ij))$, we can embed $F$ into $H$ by embedding $C$ into $H$ via $D_i$, $D_j$, and $D_h$ into $V_i$, $V_j$, and $V_h$ respectively, and embedding the remaining cycles in the same way as when $(|V_1|, |V_2|, |V_3|) = (a, b, c)$.

\begin{case}
There exist at least two cycles, each of which has at least one more vertex colored by $c_j$ than by $c_i$.
\end{case}
Denote these cycles by $C_1$ and $C_2$. For each $l \in [2]$ let $x^l_k$ denote the number of vertices in $C_l$ colored with color $c_k$ for each $k \in [3]$, and observe that $x^l_j \geq x^l_i + 1$. Let $h$ be an integer satisfying $\{h\} = \{1, 2, 3\} \setminus \{i, j\}$. Since for each $l\in [2]$, $(x^l_i, x^l_j, x^l_h)$ is a proper coloring of $C_l$, by Lemma~\ref{colorcycle}, $(x^l_i + 1, x^l_j - 1, x^l_h)$ is also a proper coloring of $C_l$. Denote the corresponding parts by $D^l_i, D^l_j, D^l_h$ for each $l \in [2]$. Then, when $(|V_1|, |V_2|, |V_3|) = (a(ij), b(ij), c(ij))$, we can embed $F$ into $H$ by embedding each $C_l$ into $H$ via $D^l_i$, $D^l_j$, and $D^l_h$ into $V_i$, $V_j$, and $V_h$ respectively, and embedding the remaining cycles as in the case of $(a, b, c)$.
\end{proof}
Thus as long as $k\leq |V_1| \leq |V_2| \leq |V_3|\leq (n-k)/2$, $H$ contains $F$ as a subgraph.
\end{proof}
\setcounter{case}{0}

\section{The regularity method and blow-up lemma for 3-graphs}\label{section-preliminaries}
The well-known weak hypergraph regularity lemma is a straightforward extension of Szemer{\'e}di's regularity lemma for graphs \cite{szemeredi1975regular}. 
Prior to this paper, the strong hypergraph regularity lemma was commonly used in the spanning subgraph embedding problem for hypergraphs, see \cite{2011Keevash, kuhn2006loose, 2011Rodl, 2006Rodl} and the references. 
In this paper, we only need to use the weak hypergraph regularity lemma. 

\subsection{Regularity method for 3-graphs}

Let $\mathcal{H}$ be a $3$-partite $3$-graph with disjoint vertex sets $V_1, V_2$ and $V_3$, which we also denote as $(V_1, V_2, V_3)_{\mathcal{H}}$. Each hyperedge in $\mathcal{H}$ contains exactly one vertex from each set $V_i$ for $i \in [3]$. Let $e(\mathcal{H})$ denote the number of edges in $\mathcal{H}$. The \emph{density} of $\mathcal{H}$ is defined as:
$$
d_{\mathcal{H}}(V_1, V_2, V_3)=\frac{e(\mathcal{H})}{|V_1||V_2||V_3|}.
$$
Given $\varepsilon>0$, we say that $(V_1, V_2, V_3)_{\mathcal{H}}$ is

\begin{itemize}
\item (weakly) \emph{$\varepsilon$-regular} if for every subhypergraph $(V_1', V_2', V_3')_{\mathcal{H}}$ with $V_i' \subseteq V_i$ and $|V_i'|\geq\varepsilon|V_i|$ for each $i\in[3]$, we have
$$
|d_{\mathcal{H}}(V_1', V_2', V_3')-d_{\mathcal{H}}(V_1, V_2, V_3)|<\varepsilon;
$$
\item (weakly) \emph{$(\varepsilon, d)$-regular} if additionally $d_{\mathcal{H}}(V_1, V_2, V_3)\geq d$;
\item (weakly) \emph{$(\varepsilon, d)$-half-regular} if for every subhypergraph $(V_1', V_2', V_3')_{\mathcal{H}}$ with $V_i' \subseteq V_i$ and $|V_i'|\geq \varepsilon|V_i|$ for each $i\in[3],$ we have $d_{\mathcal{H}}(V_1', V_2', V_3')\geq d$;
\item (weakly) \emph{$(\varepsilon, d)$-superregular} if it is $(\varepsilon, d)$-regular and additionally,
    $$d_{\mathcal{H}}(x)\geq \frac{d|V_1||V_2||V_3|}{|V_i|}$$ 
    for each $i\in[3]$ and $x\in V_i$;
\item (weakly) \emph{$(\varepsilon, d)$-half-superregular} if for every subhypergraph $(V_1', V_2', V_3')_{\mathcal{H}}$ with $V_i' \subseteq V_i$ and $|V_i'|\geq \varepsilon|V_i|$ for each $i\in[3],$ we have $d_{\mathcal{H}}(V_1', V_2', V_3')\geq d$ and $$d_{\mathcal{H}}(x)\geq \frac{d|V_1||V_2||V_3|}{|V_i|}$$ for each $i\in[3]$ and $x\in V_i$.
\end{itemize}
We will generally omit the terms weakly when using these definitions in the following.

The following so-called slicing lemma was introduced by the first author and Staden, see Lemma 2.4 in \cite{cheng2023transversals}.  
Note that we use the term ``half-(super)regular", which can be obtained by the original proof with some additional trivial analysis. 

\begin{lemma}[\cite{cheng2023transversals}, Slicing lemma]\label{slicinglemma}
Let $0<1/m\ll\varepsilon\ll\alpha\ll d\leq 1$. Let $\mathcal{H}$ be a $3$-partite $3$-graph with vertex sets $V_1, V_2, V_3$ and $m\leq \min\{|V_1|, |V_2|, |V_3|\}$. Let $V'_i\subseteq V_i$ for each $i\in [3]$ and $\mathcal{H'}$ be the subhypergraph of $\mathcal{H}$ induced on $V_1'\cup V_2'\cup V_3'$. Then the following holds.
\begin{enumerate}[$(i)$]
\item Suppose that $\mathcal{H}$ is $(\varepsilon, d)$-$(half)$-regular, and $|V'_i|\geq \alpha|V_i|$ for each $i\in [3]$. Then $\mathcal{H'}$ is $(\varepsilon/\alpha, d/2)$-$(half)$-regular.
\item Suppose that $\mathcal{H}$ is $(\varepsilon, d)$-$(half)$-superregular, and $|V'_i|\geq (1-\alpha)|V_i|$ for each $i\in [3]$. Then $\mathcal{H'}$ is $(2\varepsilon, d/2)$-$(half)$-superregular.
\item Suppose that $\mathcal{H}$ is $(\varepsilon, d)$-$(half)$-superregular, and $m_i\geq \alpha|V_i|$ for each $i\in [3]$. If $V'_i\subseteq V_i$ is a uniform random subset of size $m_i$ for each $i\in[3]$, then with high probability, $\mathcal{H'}$ is $(\varepsilon/\alpha, d^2/16)$-$(half)$-superregular.
\end{enumerate}
\end{lemma}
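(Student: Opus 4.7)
The plan is to handle the three parts separately, since (i) and (ii) are deterministic inheritance statements while (iii) requires a probabilistic concentration argument. The unifying observation is that any subset $W_i' \subseteq V_i'$ whose relative size inside $V_i'$ is at least some $\varepsilon'$ satisfies $|W_i'| \geq \varepsilon' \cdot (|V_i'|/|V_i|) \cdot |V_i|$, so as long as the ratio $|V_i'|/|V_i|$ is bounded below, the original $\varepsilon$-regularity of $\mathcal{H}$ applies to $(W_1', W_2', W_3')$. Densities and vertex-degrees in $\mathcal{H}'$ differ from those in $\mathcal{H}$ only through the vertices excluded by the restriction, and these discrepancies are controlled quantitatively in each case.

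For part (i), I would take any $W_i' \subseteq V_i'$ with $|W_i'| \geq (\varepsilon/\alpha)|V_i'|$ and use $|V_i'| \geq \alpha|V_i|$ to deduce $|W_i'| \geq \varepsilon|V_i|$, so that $\varepsilon$-regularity of $\mathcal{H}$ gives $|d_\mathcal{H}(W_1', W_2', W_3') - d_\mathcal{H}(V_1, V_2, V_3)| < \varepsilon$. Applying the same bound also with $W_i' = V_i'$ (which is allowed since $\alpha \geq \varepsilon$) and using the triangle inequality produces $|d_{\mathcal{H}'}(W_1', W_2', W_3') - d_{\mathcal{H}'}(V_1', V_2', V_3')| < 2\varepsilon \leq \varepsilon/\alpha$, establishing $(\varepsilon/\alpha)$-regularity; the density bound $d_{\mathcal{H}'}(V_1', V_2', V_3') \geq d - \varepsilon \geq d/2$ follows from $\varepsilon \ll d$. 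For the half-regular variant only the lower density bound is needed, and the same calculation suffices.

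For part (ii), the analogous estimate uses $|W_i'| \geq 2\varepsilon|V_i'| \geq 2\varepsilon(1-\alpha)|V_i| \geq \varepsilon|V_i|$ and delivers $(2\varepsilon, d/2)$-regularity by the argument of (i). For the superregularity part, I would observe that for $x \in V_1'$ the codegree satisfies
\[
d_{\mathcal{H}'}(x) \;\geq\; d_\mathcal{H}(x) - |V_2 \setminus V_2'|\cdot|V_3| - |V_2|\cdot|V_3 \setminus V_3'| \;\geq\; (d - 2\alpha)|V_2||V_3|,
\]
since any edge through $x$ lost by the restriction must use a vertex of $(V_2 \setminus V_2') \cup (V_3 \setminus V_3')$; using $\alpha \ll d$ and $|V_i'| \leq |V_i|$ this is at least $(d/2)|V_2'||V_3'|$, and the analogous bound holds for $x \in V_2' \cup V_3'$.

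Part (iii) is the main obstacle. Since each $V_i'$ is a uniformly random subset of $V_i$ of size $m_i \geq \alpha|V_i|$, I would apply hypergeometric Chernoff--Hoeffding concentration to show that, with high probability, for every large triple $(W_1, W_2, W_3)$ with $|W_i| \geq \varepsilon|V_i|$ both the intersection sizes $|W_i \cap V_i'|$ and the edge count $e_{\mathcal{H}}(W_1 \cap V_1', W_2 \cap V_2', W_3 \cap V_3')$ concentrate around their expectations $m_i|W_i|/|V_i|$ and $d_\mathcal{H}(W_1,W_2,W_3)\prod_i(m_i|W_i|/|V_i|)$ respectively. The delicate point is that one cannot naively union-bound over all $2^{|V_i|}$ subsets; instead I would discretize by a polynomially sized net of size cutoffs and then exploit Lipschitz continuity of densities in $|W_i|$ to upgrade the net control to all large triples, absorbing the $\varepsilon/\alpha$ slack. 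Combined with the original $\varepsilon$-regularity of $\mathcal{H}$ this yields $(\varepsilon/\alpha)$-half-regularity of $\mathcal{H}'$ with density close to $d$. For the codegree condition I would apply the same concentration argument to the bipartite link of each $x$, which is a graph of density at least $d$; the weaker constant $d^2/16$ appears in place of $d/2$ because the random sampling of two parts independently introduces a multiplicative loss of order $d$ before concentration, and the $1/16$ provides a safety margin absorbing lower-order errors and the discretization slack.
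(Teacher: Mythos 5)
The paper does not prove this lemma at all: it is quoted from Cheng--Staden (Lemma 2.4 of \cite{cheng2023transversals}), with only the remark that the ``half'' variants follow from the original proof. So your proposal can only be judged on its own merits. Parts (i) and (ii) are correct and are the standard inheritance computations: the key inequalities $|W_i'|\geq(\varepsilon/\alpha)|V_i'|\geq\varepsilon|V_i|$ (resp.\ $2\varepsilon(1-\alpha)|V_i|\geq\varepsilon|V_i|$), the triangle inequality through $d_{\mathcal H}(V_1,V_2,V_3)$, and the degree estimate $d_{\mathcal H'}(x)\geq d_{\mathcal H}(x)-|V_2\setminus V_2'||V_3|-|V_2||V_3\setminus V_3'|\geq(d-2\alpha)|V_2||V_3|\geq (d/2)|V_2'||V_3'|$ all check out under the stated hierarchy.

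In part (iii), however, the route you describe for the density condition contains a step that would fail: you propose to control $e_{\mathcal H}(W_1\cap V_1',W_2\cap V_2',W_3\cap V_3')$ for \emph{every} triple with $|W_i|\geq\varepsilon|V_i|$ by concentration plus ``a polynomially sized net of size cutoffs'' and ``Lipschitz continuity of densities in $|W_i|$''. Density is not a function of the part sizes, so a net indexed by sizes cannot represent the exponentially many subsets $W_i$, and a union bound over all of them is unavailable; as written this step does not go through. Fortunately it is also unnecessary: since $|V_i'|=m_i\geq\alpha|V_i|$ holds deterministically, the (half-)regularity of $\mathcal H'$ at parameter $\varepsilon/\alpha$ with density at least $d-\varepsilon\geq d^2/16$ follows from part (i) with no randomness at all. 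The only place randomness is needed is the vertex-degree condition, and there your plan is sound: for each $x$ the number of link edges of $x$ inside $V_j'\times V_h'$ has expectation at least $d\,m_jm_h$ and concentrates (hypergeometric/Azuma), and a union bound over the linearly many vertices gives the conclusion with room to spare above $d^2/16$. (Your heuristic that the $d^2$ reflects ``a multiplicative loss of order $d$ from sampling'' is not accurate -- sampling preserves expected density -- but this is harmless since you would in fact be proving a stronger bound than the one stated.) With the density step replaced by the deterministic appeal to part (i), your argument is complete.
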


Using the slicing lemma, it is easy to obtain the following facts. It shows that we can extract a superregular $3$-partite subhypergraph from a given regular $3$-partite $3$-graph by removing a small number of vertices from each vertex set. 

\begin{lemma}\label{regtosupreg}
Let $0<\varepsilon\ll d\leq1$, and let $\mathcal{H}$ be an $(\varepsilon, d)$-$(half)$-regular $3$-partite $3$-graph with vertex sets $V_1, V_2$, and $V_3$. Then there exist subsets $V_i'\subseteq V_i$ with $|V_i'|=(1-\varepsilon)|V_i|$ for each $i\in[3]$ such that $(V_1', V_2', V_3')_{\mathcal{H}}$ is $(2\varepsilon, d/2)$-$(half)$-superregular.
\end{lemma}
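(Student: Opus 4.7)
The plan is to prune a small set of low-degree vertices from each $V_i$ and then invoke the slicing lemma (Lemma \ref{slicinglemma}(i)) to push (half-)regularity down to the pruned subhypergraph; the pruning automatically supplies the extra vertex-degree bound needed to upgrade (half-)regularity to (half-)superregularity.

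Concretely, for each $i \in [3]$ with $\{j,k\} = [3] \setminus \{i\}$, set
\[
B_i := \{x \in V_i : d_\mathcal{H}(x) < (d-\varepsilon)|V_j||V_k|\}.
\]
A standard averaging argument gives $|B_i| < \varepsilon|V_i|$: otherwise, applying the $(\varepsilon, d)$-(half-)regularity to the triple $(B_i, V_j, V_k)$ yields $d_\mathcal{H}(B_i, V_j, V_k) \geq d-\varepsilon$ (in the half-regular case one even has $\geq d$; in the regular case one uses $d_\mathcal{H}(V_1,V_2,V_3) \geq d$ together with the defect inequality), so the average degree of vertices in $B_i$ into $V_j \times V_k$ is at least $(d-\varepsilon)|V_j||V_k|$, contradicting the definition of $B_i$. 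Thus $|V_i \setminus B_i| > (1-\varepsilon)|V_i|$, so we may select $V_i' \subseteq V_i \setminus B_i$ with $|V_i'| = (1-\varepsilon)|V_i|$, and we let $\mathcal{H}'$ denote the induced 3-partite subhypergraph on $(V_1', V_2', V_3')$.

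Applying Lemma \ref{slicinglemma}(i) with $\alpha = 1-\varepsilon$ gives that $\mathcal{H}'$ is $(\varepsilon/(1-\varepsilon), d/2)$-(half-)regular, and hence $(2\varepsilon, d/2)$-(half-)regular since $\varepsilon$ is small. For the vertex-degree condition, fix $x \in V_i' \subseteq V_i \setminus B_i$; restricting its co-neighbourhood from $V_j \times V_k$ to $V_j' \times V_k'$ discards at most $|V_j \setminus V_j'|\cdot|V_k| + |V_j|\cdot|V_k \setminus V_k'| = 2\varepsilon|V_j||V_k|$ pairs, so
\[
d_{\mathcal{H}'}(x) \geq (d-\varepsilon)|V_j||V_k| - 2\varepsilon|V_j||V_k| = (d-3\varepsilon)|V_j||V_k| \geq \frac{d}{2}|V_j'||V_k'|,
\]
where the last inequality uses $\varepsilon \ll d$ together with $|V_j'||V_k'| = (1-\varepsilon)^2|V_j||V_k|$. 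This is precisely the $(d/2)|V_1'||V_2'||V_3'|/|V_i'|$ lower bound required by (half-)superregularity. No genuine obstacle arises; the only point of care is that Lemma \ref{slicinglemma}(ii) requires a (half-)superregular input, so the pruning step is essential to manufacture the vertex-degree lower bound by hand before passing through the (half-)regular slicing in part (i).
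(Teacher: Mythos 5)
Your proof is correct and takes essentially the same route as the paper: prune the few low-degree vertices (the paper uses the threshold $(d-3\varepsilon)$ instead of your $(d-\varepsilon)$, with the same density-versus-average-degree contradiction showing the bad set has size less than $\varepsilon|V_i|$), then obtain (half-)regularity of the induced subhypergraph via Lemma \ref{slicinglemma}(i) and verify the vertex-degree bound directly, just as you do.
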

\begin{proof}
Let $V_i'':=\{v\in V_i: d_{\mathcal{H}}(v)<(d-3\varepsilon)|V_j||V_h|\}$ for $\{j, h\}=[3]\setminus\{i\}$ and each $i\in[3]$. We claim that $|V_i''|<\varepsilon|V_i|$ for each $i\in[3]$. Indeed, suppose for contradiction that $|V_1''|\geq\varepsilon|V_1|$. Then we have $d_{\mathcal{H}}(V_1'', V_2, V_3)>d-\varepsilon$. However, by the definition of $V_1''$, we have $e_{\mathcal{H}}(V_1'', V_2, V_3)<(d-3\varepsilon)|V_1''||V_2||V_3|$. This implies that $d_{\mathcal{H}}(V_1'', V_2, V_3)<d-3\varepsilon$, a contradiction.

Let $V_i'\subseteq V_i\setminus V_i''$ such that $|V_i'|=(1-\varepsilon)|V_i|$ for each $i\in [3]$. We show that $\mathcal{H}'=(V_1', V_2', V_3')_{\mathcal{H}}$ is $(2\varepsilon, d-3\varepsilon)$-(half)-superregular. By Lemma \ref{slicinglemma} (i), $\mathcal{H}'$ is $(2\varepsilon, d/2)$-(half)-regular.
Moreover, for each vertex $v\in V_i'$ and $\{j, h\}=[3]\setminus\{i\}$, we have
$$d_{\mathcal{H}'}(v)\geq(d-3\varepsilon)|V_j||V_h|-2\varepsilon |V_i||V_j|\geq d|V'_j||V'_h|/2.$$
Therefore, $\mathcal{H'}$ is $(2\varepsilon, d/2)$-(half)-superregular.
\end{proof}

Now, we present the weak hypergraph regularity lemma, which follows from Chung \cite{chung1991RSA}.

\begin{lemma}[Weak hypergraph regularity lemma]\label{WeakRegLem}
For all integers $L_0\geq 1$ and for every $\varepsilon>0$, there exists $T_0=T_0(t_0, \varepsilon)$ such that for sufficiently large $n$ and every $3$-graph $\mathcal{H}=(V, E)$ on $n$ vertices, there exists a partition $V=V_0\cup V_1\cup\cdots \cup V_L$ satisfying:
\begin{enumerate}[$(i)$]
\item $L_0 \leq L \leq T_0$;
\item $\left|V_1\right|=\cdots=\left|V_L\right|$ and $\left|V_0\right| \leq\varepsilon n$;
\item for all but at most $\varepsilon\binom{L}{3}$ triples $\left\{i, j, k\right\}\subseteq[L]$, we have that $(V_{i}, V_{j}, V_{k})_{\mathcal{H}}$ is $\varepsilon$-regular.
\end{enumerate}
\end{lemma}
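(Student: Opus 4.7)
\medskip

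\noindent\textbf{Proof proposal.} My plan is to follow the original Szemerédi-style energy-increment argument, adapted to $3$-graphs as in Chung's proof. The central object is the \emph{mean-square density} (or index) of a partition $\mathcal{P}=\{V_0,V_1,\ldots,V_L\}$ of $V(\mathcal{H})$, defined by
\[
\mathrm{ind}(\mathcal{P}) \;=\; \sum_{1\le i<j<k\le L} \frac{|V_i||V_j||V_k|}{n^3}\, d_\mathcal{H}(V_i,V_j,V_k)^2,
\]
which always satisfies $0\le \mathrm{ind}(\mathcal{P})\le 1$. I will start with any initial equipartition of $V(\mathcal{H})\setminus V_0$ into $L_0$ parts of equal size (putting the remainder into $V_0$), and iteratively refine whenever the current partition fails the regularity conclusion.

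The key step is a defect Cauchy--Schwarz inequality for $3$-partite densities: if $(V_i,V_j,V_k)_{\mathcal{H}}$ is not $\varepsilon$-regular, then there exist $V_i'\subseteq V_i$, $V_j'\subseteq V_j$, $V_k'\subseteq V_k$ with $|V_i'|\ge\varepsilon|V_i|$ etc.\ and $|d_\mathcal{H}(V_i',V_j',V_k') - d_\mathcal{H}(V_i,V_j,V_k)|\ge\varepsilon$. Splitting each $V_i$ into $V_i'$ and $V_i\setminus V_i'$ (and taking the common refinement over all irregular triples) produces a refinement $\mathcal{P}'$ whose index exceeds that of $\mathcal{P}$ by at least $\varepsilon^{5}$ (up to absolute constants): expanding the squared densities over the refined cells and using convexity (i.e.\ $\mathbb{E}[X^2]\ge(\mathbb{E} X)^2$ with a defect term coming from the regularity witness) yields this gain. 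Since $\mathrm{ind}(\cdot)\le 1$, the refinement process must terminate after at most $\varepsilon^{-5}$ iterations, which bounds $L$ by a tower-type function of $\varepsilon$ and $L_0$.

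After termination I will have a vertex partition whose cells may have wildly differing sizes, so the last step is to post-process it into an equipartition. For this I chop every part into blocks of a common size $m\approx n/L^*$ for suitably large $L^*$, gathering any residual vertices into the exceptional part $V_0$. A standard counting argument shows that this block-partition inherits $\varepsilon$-regularity on all but at most $\varepsilon\binom{L^*}{3}$ triples: only those triples of blocks lying inside a previously irregular or ``boundary'' super-triple can fail, and there are at most $\varepsilon\binom{L^*}{3}$ such blocks. Choosing $L^*$ between $L_0$ and a suitable $T_0(L_0,\varepsilon)$ then fulfills all three clauses of the lemma.

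The main obstacle I anticipate is the bookkeeping in the post-processing step: I must simultaneously (a) preserve the index gain of the refinement (so that the iteration still halts in a bounded number of rounds), (b) keep the exceptional part small enough that $|V_0|\le\varepsilon n$, and (c) guarantee that the proportion of irregular triples among the equitable blocks does not blow up beyond $\varepsilon\binom{L}{3}$. The standard trick is to absorb the ``leftover'' of each part of $\mathcal{P}'$ into $V_0$ after refinement and then cut the remaining portions into equal-size blocks; I would verify that with parameters $\varepsilon'=\varepsilon/C$ at intermediate stages for a large absolute constant $C$, the losses at stages (b) and (c) can be charged against the $\varepsilon^5$ index increment without interfering with termination. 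No other part of the argument is delicate, and the overall structure mirrors Szemerédi's graph proof almost verbatim, with triples playing the role of pairs.
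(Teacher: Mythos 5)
The paper does not actually prove this lemma: it is imported as a known result attributed to Chung \cite{chung1991RSA} (and in the main argument only the degree form, Lemma~\ref{degweakreg}, is ever invoked), so your proposal is supplying the proof that the paper outsources. What you sketch is precisely the standard Szemer\'edi-type energy-increment argument that underlies the cited reference, and it is essentially sound: the index is bounded by $1$ and never decreases under refinement, the witness sets of an irregular triple give a defect Cauchy--Schwarz gain, and the post-processing into an equipartition with a slightly smaller auxiliary parameter (absorbing leftovers into $V_0$ and charging boundary/irregular blocks against $\varepsilon\binom{L}{3}$) is routine. One quantitative slip worth fixing: the gain per refinement round is of order $\varepsilon^{6}$, not $\varepsilon^{5}$ --- an irregular triple deviates by at least $\varepsilon$ in density on a witness box of relative volume at least $\varepsilon^{3}$, so it gains at least $\varepsilon^{5}$ of \emph{its own} weight, and you are only guaranteed an $\varepsilon$-fraction of triples are irregular, giving a total increment of about $\varepsilon^{6}$ and hence $O(\varepsilon^{-6})$ iterations. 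This changes nothing structurally: the number of parts still grows at most exponentially per round, so $L$ is bounded by a tower-type function of $L_0$ and $1/\varepsilon$, exactly as the statement requires.
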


The vertex partition in Lemma \ref{WeakRegLem} is called an \emph{$\varepsilon$-regular partition} of $\mathcal{H}$.
We call $V_0$ the \emph{exceptional set}, and $V_1, \ldots, V_L$ \emph{clusters}.
For our purpose, we will use the degree form of the weak hypergraph regularity lemma \cite{kuhn2014fractional}, which is proved in the same way as the original regularity lemma of graphs (see a proof in \cite{townsend2016extremal}).
\begin{lemma}[Degree form of the weak hypergraph regularity lemma]\label{degweakreg}
For any integer $L_0 \geq 1$ and every $\varepsilon>0$, there is an integer $n_0=n_0(\varepsilon, L_0)$ such that for every $d\in[0,1)$ and for every $3$-graph $\mathcal{H}=(V, E)$ on $n \geq n_0$ vertices, there exists a partition of $V$ into $V_0, V_1, \ldots, V_L$ and a spanning subhypergraph $\mathcal{H'}$ of $\mathcal{H}$ such that the following properties hold:
\begin{enumerate}[$(i)$]
\item $L_0 \leq L \leq n_0$ and $\left|V_0\right| \leq \varepsilon n$;
\item $\left|V_1\right|=\cdots=\left|V_L\right|:=m$;
\item $d_{\mathcal{H'}}(v)>d_{\mathcal{H}}(v)-(d+\varepsilon) n^2$ for each $v \in V$;
\item every edge of $\mathcal{H'}$ with more than one vertex in a single cluster $V_i$ for some $i \in[L]$ has at least one vertex in $V_0$;
\item for all triples $\{i, j, k\} \in\binom{L}{3}$, we have that $\left(V_i, V_j, V_k\right)_{\mathcal{H'}}$ is either empty or $(\varepsilon, d)$-regular.
\end{enumerate}
\end{lemma}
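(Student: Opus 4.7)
The plan is to deduce the degree form from the (non-degree) weak regularity lemma (Lemma~\ref{WeakRegLem}) by a standard clean-up. Choose an auxiliary constant $\varepsilon'>0$ with $\varepsilon'\ll \varepsilon$ and $\varepsilon' T^{2}\ll \varepsilon$, where $T:=T_{0}(\max\{L_{0},1/\varepsilon\},\varepsilon')$; the apparent circularity between $\varepsilon'$ and $T$ is broken by first bounding $T_{0}(L_{0},\varepsilon)$ and then taking $\varepsilon'$ much smaller than $\varepsilon/T_{0}(L_{0},\varepsilon)^{2}$. Apply Lemma~\ref{WeakRegLem} to $\mathcal{H}$ with parameters $\max\{L_{0},1/\varepsilon\}$ and $\varepsilon'$ to obtain a partition $V=V_{0}^{*}\cup V_{1}\cup\cdots\cup V_{L}$ with $L_{0}\le L\le T$, $|V_{0}^{*}|\le \varepsilon' n$, equal cluster sizes $m$, and at most $\varepsilon'\binom{L}{3}$ irregular cluster triples. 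Define $\mathcal{H}''$ by deleting every edge of $\mathcal{H}$ that either (I) has at least two vertices in a single cluster $V_{i}$ and no vertex in $V_{0}^{*}$, (II) lies in an irregular cluster triple, or (III) lies in a regular cluster triple of density less than $d$. Conditions (i), (ii), (iv) are now immediate, and (v) follows because every non-empty cluster triple of $\mathcal{H}''$ is $\varepsilon'$-regular (hence $\varepsilon$-regular) with density at least $d$.

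The substantive step is verifying the per-vertex inequality (iii). For $v\in V_{0}^{*}$, no incident edge is removed, so the bound is trivial. For $v$ in some cluster $V_{i}$, the loss splits into three pieces. Type~(I) contributes at most $(L-1)\binom{m}{2}+(m-1)n\le 2n^{2}/L_{0}\ll \varepsilon n^{2}$. Type~(II) contributes at most $r_{i}m^{2}$, where $r_{i}$ is the number of irregular triples through $V_{i}$; since $\sum_{i}r_{i}\le 3\varepsilon'\binom{L}{3}$, Markov shows that at most $\sqrt{\varepsilon'}\,L$ clusters satisfy $r_{i}>\sqrt{\varepsilon'}\,L^{2}$, and for the remaining ``good'' clusters the Type~(II) loss is at most $\sqrt{\varepsilon'}\,n^{2}$. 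For Type~(III), I would use the following standard consequence of regularity: in any $\varepsilon'$-regular triple $(V_{i},V_{j},V_{k})$ of density $d_{ijk}$, the set $\{v\in V_{i}: d_{\mathcal{H}}(v,V_{j},V_{k})>(d_{ijk}+\varepsilon')m^{2}\}$ has size at most $\varepsilon' m$, since otherwise this sub-triple witnesses a density deviation exceeding $\varepsilon'$, contradicting regularity. Hence any $v$ avoiding all such high-degree exceptional sets across the sparse regular triples through $V_{i}$ has Type~(III) loss at most $\binom{L-1}{2}(d+\varepsilon')m^{2}\le (d+\varepsilon')n^{2}/2$.

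Finally, enlarge $V_{0}^{*}$ to $V_{0}$ by adjoining every vertex in a bad cluster together with every high-degree vertex across all sparse regular triples; the total enlargement is bounded by $\sqrt{\varepsilon'}\,n+3\varepsilon'\binom{L}{3}m\le (\sqrt{\varepsilon'}+\varepsilon' T^{2})n\le \varepsilon n$, after a tiny further adjustment to restore equal cluster sizes. Take $\mathcal{H}':=\mathcal{H}''$ (note that edges incident to $V_{0}$ are never deleted). Every vertex still in some cluster then has loss at most $2n^{2}/L_{0}+\sqrt{\varepsilon'}\,n^{2}+(d+\varepsilon')n^{2}/2<(d+\varepsilon)n^{2}$, while vertices of $V_{0}$ satisfy (iii) trivially. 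The main obstacle is precisely the per-vertex (rather than averaged) control of the sparse-triple contribution, which is resolved by the high-degree-vertex regularity observation above, together with the interleaved choice of $\varepsilon'$ relative to $T=T_{0}(L_{0},\varepsilon')$.
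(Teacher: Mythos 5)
Your overall strategy (clean up the partition from Lemma \ref{WeakRegLem}: delete edges inside clusters, in irregular triples and in sparse triples, then enlarge the exceptional set) is the standard route to the degree form, but the step you yourself single out as the main obstacle --- per-vertex control of the sparse-triple loss --- is not correctly resolved. You move into $V_0$ \emph{every} vertex that is atypical for \emph{some} sparse regular triple, and bound the number of such vertices by $3\varepsilon'\binom{L}{3}m\le \varepsilon' T^2 n$, requiring $\varepsilon' T^2\ll\varepsilon$. This is circular in a way your proposed fix does not repair: once Lemma \ref{WeakRegLem} is applied with parameter $\varepsilon'$, the only available bound on $L$ is $T_0(\max\{L_0,1/\varepsilon\},\varepsilon')$, which depends on $\varepsilon'$ and blows up as $\varepsilon'\to 0$; bounding $T_0(L_0,\varepsilon)$ first and then choosing $\varepsilon'\ll \varepsilon/T_0(L_0,\varepsilon)^2$ gives no control over the $L$ you actually obtain, since nothing forces $L\le T_0(L_0,\varepsilon)$. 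Per cluster your exceptional set can have size up to $3\varepsilon' m\binom{L-1}{2}$, which for large $L$ exceeds $m$ itself, so the bound $|V_0|\le\varepsilon n$ collapses. The repair is the usual Markov-type refinement: only move a vertex to $V_0$ if it is atypical (degree exceeding $(d+\varepsilon')m^2$ into the triple) for \emph{many} sparse triples through its cluster, say at least $\tfrac{\varepsilon}{10}L^2$ of them. Since the number of atypicality incidences within a cluster is at most $\varepsilon' m\binom{L-1}{2}$, at most about $5(\varepsilon'/\varepsilon)m$ vertices per cluster are moved, i.e. at most $5(\varepsilon'/\varepsilon)n$ in total, independently of $L$; and every surviving vertex loses at most $\tfrac{\varepsilon}{10}L^2m^2+\binom{L-1}{2}(d+\varepsilon')m^2\le\bigl(\tfrac{\varepsilon}{10}+\tfrac{d+\varepsilon'}{2}\bigr)n^2$ from sparse triples. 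With $\varepsilon'\le\varepsilon^2/100$ this restores both $|V_0|\le\varepsilon n$ and condition (iii).

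Two smaller points. First, with your choice $L\ge 1/\varepsilon$ the Type (I) loss is about $1.5n^2/L\le 1.5\varepsilon n^2$, which already exceeds the allowed budget when $d$ is tiny (note $n_0$ may not depend on $d$, so $d<\varepsilon$ is possible); take the lower bound on the number of clusters to be $\max\{L_0,\lceil 10/\varepsilon\rceil\}$ instead. Second, condition (v) is stated with respect to the \emph{final} clusters, and after removing individual vertices (and rebalancing sizes) a surviving triple whose density was barely $d$ could drop below $d$; either use a slightly stricter deletion threshold (delete triples of density $<d+\sqrt{\varepsilon'}$, say) or invoke a slicing argument as in Lemma \ref{slicinglemma} to keep $\varepsilon$-regularity and density at least $d$. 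Your regularity observation that each sparse triple has at most $\varepsilon' m$ high-degree vertices per cluster is correct; what needs changing is only the accounting of how many vertices you can afford to discard.
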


When applying the regularity lemma, a crucial point is to consider the reduced hypergraph, defined precisely in Definition \ref{reducegraph}.

\begin{definition}[Reduced hypergraph]\label{reducegraph}
Let $\mathcal{H}=(V, E)$ be a $3$-graph. Given parameters $\varepsilon>0$, $d \in[0,1)$ and $L_0\geqslant1$, we define the reduced hypergraph $\mathcal{R}=\mathcal{R}\left(\varepsilon, d, L_0\right)$ of $\mathcal{H}$ as follows. Apply the degree form of the weak hypergraph regularity lemma to $\mathcal{H}$, with parameters $\varepsilon, d, L_0$ to obtain a spanning subhypergraph $\mathcal{H'}$ and a partition $V_0, \ldots, V_L$ of $V$. Then $\mathcal{R}$ has vertices $V_1, \ldots, V_L$, and there exists an edge between $V_{i}, V_{j}, V_{k}$ precisely when $\left(V_{i}, V_{j}, V_{k}\right)_{\mathcal{H}'}$ is $(\varepsilon, d)$-regular.
\end{definition}

The reduced hypergraph $\mathcal{R}$ is also a $3$-graph. The following lemma is the case $k=3$ and $\ell=2$ of Lemma 5.5 in \cite{kuhn2014fractional}, which shows that the reduced hypergraph almost inherits the vertex degree of the original hypergraph.
\begin{lemma}[\cite{kuhn2014fractional}, Lemma 5.5]\label{degreeinherit}
Suppose $c>0, L_0\geq1$ and $0<\varepsilon\leq d \leq c^3/64$. Let $\mathcal{H}$ be an $n$-vertex $3$-graph with $\delta_{2}(\mathcal{H})\geq cn$. Suppose $\mathcal{R}=\mathcal{R}(\varepsilon, d, L_0)$ be the reduced hypergraph of $\mathcal{H}$. Then at least $\binom{|V(\mathcal{R})|}{2}-36d^{1/3}|\mathcal{R}|^{2}$ of the $2$-tuples of vertices of $\mathcal{R}$ have degree at least $(c-4d^{1/3})|\mathcal{R}|$.
\end{lemma}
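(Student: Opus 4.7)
The plan is to promote the vertex-degree inheritance of Lemma~\ref{degweakreg}(iii) into codegree inheritance for pairs of clusters of $\mathcal{R}$, via a Markov-type averaging that trades off pointwise deficit against the number of exceptional pairs, and then to use properties (iv) and (v) to translate pointwise codegree in $\mathcal{H}'$ into codegree in $\mathcal{R}$.

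First, summing Lemma~\ref{degweakreg}(iii) over $v\in V$ and invoking the identities $\sum_{v} d_{\mathcal{H}}(v) = 3e(\mathcal{H}) = \sum_{\{u,v\}}|N_{\mathcal{H}}(u,v)|$ (valid in every $3$-graph) yields
\[
\sum_{\{u,v\}\in\binom{V}{2}}\bigl(|N_{\mathcal{H}}(u,v)| - |N_{\mathcal{H}'}(u,v)|\bigr) \;\leq\; (d+\varepsilon)\,n^3.
\]
For each pair of distinct clusters $\{V_i,V_j\}$ with $i,j\in[L]$, set $\Delta_{ij}:=\sum_{\{u,v\}:\,u\in V_i,\,v\in V_j}(|N_{\mathcal{H}}(u,v)|-|N_{\mathcal{H}'}(u,v)|)$ and call $\{V_i,V_j\}$ \emph{bad} if $\Delta_{ij}>3 d^{1/3} m^2 n$, where $m=|V_1|$. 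Markov's inequality, together with $d+\varepsilon\leq 2d$, $n/m\leq L/(1-\varepsilon)$, and $d^{2/3}\leq d^{1/3}$ (which follows from $d\leq c^3/64\leq 1$), bounds the number of bad pairs by
\[
\frac{(d+\varepsilon)\,n^3}{3 d^{1/3} m^2 n}\;\leq\;\frac{2 d^{2/3} L^2}{3(1-\varepsilon)^2}\;\leq\; 36\,d^{1/3} L^2.
\]

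For a good pair $\{V_i,V_j\}$, the hypothesis $\delta_2(\mathcal{H})\geq cn$ gives the lower bound
\[
\sum_{\{u,v\}:\,u\in V_i,\,v\in V_j}|N_{\mathcal{H}'}(u,v)| \;\geq\; m^2 cn - \Delta_{ij} \;\geq\; m^2 n\bigl(c - 3 d^{1/3}\bigr).
\]
For the matching upper bound, Lemma~\ref{degweakreg}(iv) rules out every edge of $\mathcal{H}'$ having two vertices in $V_i$ or in $V_j$ (such an edge would require a vertex in $V_0$, which the other two already preclude), and Lemma~\ref{degweakreg}(v) forces every non-empty triple $(V_i,V_j,V_k)_{\mathcal{H}'}$ to be $(\varepsilon,d)$-regular and hence to provide an edge of $\mathcal{R}$. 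Thus every $w\in N_{\mathcal{H}'}(u,v)$ lies in $V_0$ or in some cluster $V_k$ with $\{V_i,V_j,V_k\}\in E(\mathcal{R})$, whence
\[
\sum_{\{u,v\}:\,u\in V_i,\,v\in V_j}|N_{\mathcal{H}'}(u,v)| \;\leq\; d_{\mathcal{R}}(V_i,V_j)\,m^3 + m^2|V_0|.
\]
Combining the two bounds with $|V_0|\leq\varepsilon n$, $n/m\geq L$, and $\varepsilon\leq d\leq d^{1/3}$ (so that $c-3d^{1/3}-\varepsilon>0$ by $d\leq c^3/64$) gives
\[
d_{\mathcal{R}}(V_i,V_j) \;\geq\; (c-3 d^{1/3}-\varepsilon)\,L \;\geq\; (c-4 d^{1/3})\,L,
\]
as required.

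The one delicate point is calibrating the Markov threshold at $3 d^{1/3} m^2 n$: lowering it would tighten the per-pair deficit but loosen the control on how many pairs can be bad, and raising it would have the opposite effect. The choice $d^{1/3}$, roughly the geometric mean of the ambient loss rate $d$ and a natural per-pair scale $m^2 n$, balances both constraints at the same exponent and so produces the factor $d^{1/3}$ appearing in both parts of the statement.
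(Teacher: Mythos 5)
Your proof is correct. Note that the paper does not prove this lemma at all — it is quoted verbatim as Lemma 5.5 of the cited reference [kuhn2014fractional] — so there is no internal proof to compare against; your argument (summing the degree loss from the degree form of the regularity lemma, the identity $\sum_v d_{\mathcal{H}}(v)=3e(\mathcal{H})=\sum_{\{u,v\}}|N_{\mathcal{H}}(u,v)|$, a Markov-type averaging over cluster pairs with threshold $3d^{1/3}m^2n$, and then bounding $\sum_{u\in V_i,v\in V_j}|N_{\mathcal{H}'}(u,v)|$ above via $V_0$ together with properties (iv) and (v), which force every cluster receiving a neighbour to form an edge of $\mathcal{R}$ with $\{V_i,V_j\}$) is essentially the standard proof of such codegree-inheritance statements and matches the argument in the cited source, specialized to $3$-graphs and pairs. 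All the quantitative steps check out: the number of bad pairs is at most $\tfrac{2d^{2/3}L^2}{3(1-\varepsilon)^2}\leq 36d^{1/3}L^2$ since $\varepsilon\leq d\leq c^3/64<1$, and for good pairs $\deg_{\mathcal{R}}(V_i,V_j)\geq (c-3d^{1/3}-\varepsilon)\tfrac{n}{m}\geq (c-4d^{1/3})L$, where positivity of $c-3d^{1/3}-\varepsilon$ follows from $d\leq c^3/64$.
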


\subsection{Weak 3-graph blow-up lemma}
In this subsection, we present the transversal blow-up lemma developed recently by the first author and Staden (Theorem 1.8 in \cite{cheng2023transversals}). Specifically, we will apply the $3$-graph version as follows (Theorem 6.1 in \cite{cheng2023transversals}) to derive a key result for our proof, which we call the path embedding lemma.

An $n$-vertex 2-graph $G$ is \emph{$\mu$-separable} if there is $X\subseteq V(G)$ of size at most $\mu n$ such that the induced subgraph $G[V(G)\setminus X]$ consists of components of size at most $\mu n$. For instance, trees, 2-regular graphs, powers of Hamilton cycles, and graphs with small bandwidths are separable graphs. 

\begin{theorem}[\cite{cheng2023transversals}, Weak $3$-graph blow-up lemma]\label{weakblowup}
Let $0<1/m\ll\varepsilon, \mu, \alpha\ll\nu, d, \delta, 1/\Delta, 1/r\leq1$.
\begin{enumerate}[$(i)$]
\item Let $R$ be a $2$-graph with vertex set $[r]$.
\item Let $\mathcal{H}$ be a $3$-graph with vertex sets $V_1, \ldots, V_r$ and $V_{i j}$ for $ij\in E(R)$ where $m \leq|V_i|\leq m/\delta$ for all $i\in[r]$, and $|V_{ij}|\geq \delta m$ for all $ij\in[r]$. Suppose that $\mathcal{H}[V_i, V_j, V_{i j}]$ is $(\varepsilon, d)$-$(half)$-superregular for all $ij\in E(R)$.
\item Let $G$ be a $\mu$-separable $2$-graph with $\Delta(G)\leq\Delta$ for which there is a graph homomorphism $\phi: V(G)\rightarrow V(R)$ with $|\phi^{-1}(i)|=|V_i|$ for all $i\in[r]$ and $e(G[\phi^{-1}(i), \phi^{-1}(j)])=|V_{i j}|$ for all $ij\in E(R)$.
\item Suppose that for each $i \in[r]$ there is a set $U_i \subseteq \phi^{-1}(i)$ with $|U_i|\leq\alpha m$ and $T_x \subseteq V_{\phi(x)}$ with $|T_x|\geq\nu m$ for all $x\in U_i$.
\end{enumerate}
Then $\mathcal{H}$ contains a copy of $G^+$, where each vertex $x\in V(G)$ is mapped to $V_{\phi(x)}$ and the new vertex $t_{xy} (xy \in E(G))$ is mapped to $V_{\phi(x) \phi(y)}$; and moreover, for every $i\in[r]$ every $x\in U_i$ is mapped to $T_x$.
\end{theorem}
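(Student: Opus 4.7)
My approach follows the standard Komlós--Sárközy--Szemerédi blow-up lemma template, adapted to the transversal setting with its extra ``edge cluster'' layer, and I would split the proof into two passes: first embed the 2-graph $G$ into $\bigcup_i V_i$ respecting $\phi$, and then embed the expansion vertices $t_{xy}$ into the edge clusters $V_{ij}$.

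In Pass 1, I first dispose of the pre-assigned vertices. Build the bipartite auxiliary graph between $\bigcup_i U_i$ and $\bigcup_i V_i$ whose edges encode ``$u\in U_i$ may be placed at $v\in T_u$''; since $|T_u|\geq \nu m$ and $|U_i|\leq \alpha m$ with $\alpha\ll \nu$, a defect-Hall argument produces a partial injection $\psi_0$ placing every $u\in U_i$ inside $T_u$. I then extend $\psi_0$ to the rest of $V(G)$ by a random greedy procedure in a random order: at step $t$ the next vertex $x$ is placed at a uniformly random $v$ in its current candidate set $C_t(x)\subseteq V_{\phi(x)}$, defined as the intersection over already-embedded neighbors $y$ of $x$ of the projected link neighborhoods of $\psi(y)$. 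The $(\varepsilon,d)$-superregularity together with Lemma~\ref{slicinglemma}(iii) keep $|C_t(x)|=\Omega(d^{\Delta} m)$ throughout with high probability, so $\psi$ can be completed.

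In Pass 2, for each $ij\in E(R)$ I form the bipartite ``candidacy'' graph $B_{ij}$ with parts $E(G[\phi^{-1}(i),\phi^{-1}(j)])$ and $V_{ij}$, joining $xy$ to $t$ iff $(\psi(x),\psi(y),t)\in E(\mathcal{H})$. The hypothesis $|V_{ij}|=e(G[\phi^{-1}(i),\phi^{-1}(j)])$ makes both sides equal in size, and superregularity together with an invariant maintained during Pass 1 yields $\delta(B_{ij})\geq (d/4)|V_{ij}|$, so $B_{ij}$ has a perfect matching by König's theorem. Reading off these matchings assigns a distinct $t_{xy}\in V_{\phi(x)\phi(y)}$ for every edge of $G$, producing the required copy of $G^+$.

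The main difficulty is coupling the two passes, since Pass 2 demands that \emph{every} embedded edge pair $(\psi(x),\psi(y))$ have high codegree into $V_{\phi(x)\phi(y)}$, whereas the random greedy of Pass 1 only controls this in expectation. This is exactly where the $\mu$-separability of $G$ is used: decompose $G$ into a small exceptional set $X$ and connected chunks of size at most $\mu|G|$, embed the chunks one at a time so that each placement affects only a bounded portion of the host, and push the few problematic edges into a buffer $B_i\subseteq V_i$ reserved at the start of Pass 1. A final switching round inside $B_i$, using superregularity of $\mathcal{H}[V_i,V_j,V_{ij}]$ to swap candidates along short alternating paths, repairs the last bad pairs. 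Managing this interplay between buffer, separability, and the regularity parameter is the delicate piece and dictates the cascading hierarchy of constants in the hypothesis.
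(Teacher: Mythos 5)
You cannot really be compared against ``the paper's proof'' here: Theorem \ref{weakblowup} is not proved in this paper at all, it is quoted from Cheng and Staden \cite{cheng2023transversals} (Theorem 6.1 there), so your sketch has to stand on its own --- and as written it has a genuine gap at the decisive point, Pass 2. A minimum degree of $(d/4)|V_{ij}|$ on both sides of the balanced bipartite candidacy graph $B_{ij}$ does \emph{not} imply a perfect matching (K\"onig's theorem is not the relevant tool, and a balanced bipartite graph made of two slightly unbalanced complete bipartite blocks has linear minimum degree and no perfect matching). To finish Pass 2 you need Hall's condition, which in blow-up-lemma arguments is secured by making $B_{ij}$ (super)regular; that in turn requires guaranteeing during Pass 1 that \emph{every} $t\in V_{ij}$ has many embedded $G$-edges $(\psi(x),\psi(y))$ in its link, simultaneously for all $t$. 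This is nontrivial precisely because the embedded edges of $G[\phi^{-1}(i),\phi^{-1}(j)]$ occupy only $|V_{ij}|\ge\delta m$ of the roughly $m^2$ pairs in $V_i\times V_j$, so the superregularity of $\mathcal{H}[V_i,V_j,V_{ij}]$ says nothing by itself about the $V_{ij}$-side degrees of $B_{ij}$: a given $t$ could a priori see no embedded edge at all. You acknowledge this coupling problem, but deferring it to an unspecified ``invariant maintained during Pass~1'' plus a ``final switching round'' is exactly where the content of the lemma lies; nothing in your random greedy analysis controls it, so the proposal does not yet constitute a proof.

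A secondary gap of the same nature sits in Pass 1: keeping candidate sets of size $\Omega(d^{\Delta}m)$ does not let a greedy procedure ``be completed'' to a \emph{spanning} embedding of $G$ onto $\bigcup_i V_i$ --- once fewer than $c\,d^{\Delta}m$ vertices of some $V_i$ remain unused, a candidate set of relative size $d^{\Delta}$ may contain no free vertex. The standard Koml\'os--S\'ark\"ozy--Szemer\'edi remedy (buffer vertices fixed in advance and a K\"onig--Hall matching for the leftover vertices, with separability used to order the chunks) is what your last paragraph gestures at, but it is asserted rather than argued, and it must moreover be run with only half-superregularity (one-sided density bounds), which is the regime the cited Theorem 6.1 of \cite{cheng2023transversals} is designed to handle. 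So the architecture you propose is plausible and broadly in the spirit of the transversal blow-up machinery, but the two steps you treat as routine (Hall's condition for the edge-to-$V_{ij}$ matching, and completion of the spanning vertex embedding) are the theorem's actual difficulty and remain unproved in your sketch.
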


Using Lemma \ref{weakblowup}, we prove the following path embedding lemma.

\begin{lemma}[Path embedding lemma]\label{pathembed}
Let $m$ be a integer and $0<1/m\ll\varepsilon\ll d\leq 1$. 
Suppose $G$ is a \((\varepsilon, d)\)-$(half)$-superregular $3$-partite $3$-graph on vertex sets $X_1, X_2$ and $X_3$ that satisfy
$m\leq\left|X_i\right|\leq 2m$ for each $i \in[3]$.
For \( i, j \in [3] \), let \( V_i \subseteq X_i \), \( V_j \subseteq X_j \) be any subsets satisfying \( |V_i|, |V_j| \geq dm \), we have the following holds. For any integer \( m' \) with \( 1 + 0.1|i - j| \leq m' \ll m \), the graph \( G \) contains a loose path of length \( m' \) whose endpoints lie in \( V_i \) and \( V_j \), respectively.
\end{lemma}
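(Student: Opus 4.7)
Our strategy is to apply the weak $3$-graph blow-up lemma (Theorem \ref{weakblowup}) together with the slicing lemma (Lemma \ref{slicinglemma}). The desired loose path of length $m'$ is the $1$-expansion of the underlying $2$-graph path $P := x_0 x_1 \cdots x_{m'}$, whose endpoints (of the loose path) are $x_0$ and $x_{m'}$. Since $P$ has only $\Theta(m')$ vertices---which may be much smaller than $m$---we work instead with a padded $2$-graph $\mathscr{F}$ on $\Theta(m)$ vertices containing $P$ as a component, using an auxiliary matching as ballast so that the blow-up lemma's size hypothesis is met.

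Fix $L = \Theta(m)$ with $L \equiv m'+1 \pmod 2$, and set $\mathscr{F} := P \sqcup M$, where $M$ is a perfect matching on an auxiliary set of $L-(m'+1)$ vertices. We construct a proper $3$-coloring $\phi: V(\mathscr{F}) \to [3]$ with $\phi(x_0) = i$ and $\phi(x_{m'}) = j$ in which every colour class has size $L/3 \pm O(1)$ and every edge colour-type $\{s,t\}$ appears $|E(\mathscr{F})|/3 \pm O(1)$ times. The restriction of $\phi$ to $P$ corresponds to a balanced walk of length $m'$ in $K_3$ from $i$ to $j$, which exists under the hypothesis $1+0.1|i-j| \leq m'$, excluding the vacuous case $m' = 1$, $i = j$ (which is impossible in a $3$-partite host since a single hyperedge cannot contain two vertices in one part). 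The colours of the dummy vertices and edges of $M$ are then assigned to correct any residual imbalance from the $P$-component.

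Let $n_s := |\phi^{-1}(s)|$ for $s \in [3]$ and $e_{st} := |\{e \in E(\mathscr{F}) : \phi(e) = \{s,t\}\}|$ for $st \in \binom{[3]}{2}$. Sample pairwise-disjoint uniformly random subsets $\mathscr{V}_s \subseteq X_s$ of size $n_s$ and $\mathscr{V}_{st} \subseteq X_k$ of size $e_{st}$, where $\{s,t,k\}=[3]$. Each of these subsets occupies a constant fraction (at least $1/6$, say) of its ambient $X_r$, so Lemma \ref{slicinglemma}(iii) applied to each pair $\{s,t\}$, combined with a union bound, shows that with high probability $G[\mathscr{V}_s, \mathscr{V}_t, \mathscr{V}_{st}]$ is $(\varepsilon', d')$-(half)-superregular for some $\varepsilon' = O(\varepsilon)$ and $d' = d^2/16$. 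Simultaneously, Lemma \ref{fairly} applied to the dense sets $V_i, V_j$ ensures $|V_i \cap \mathscr{V}_i|, |V_j \cap \mathscr{V}_j| \geq dL/10$.

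We now invoke Theorem \ref{weakblowup} with reduced graph $R = K_3$ on $[3]$, host $3$-graph formed by the parts $\{\mathscr{V}_s\}_{s\in[3]}$ and $\{\mathscr{V}_{st}\}_{st\in\binom{[3]}{2}}$, base $2$-graph $\mathscr{F}$, homomorphism $\phi$, and anchor sets $U_{\phi(x_0)} \ni x_0$, $U_{\phi(x_{m'})} \ni x_{m'}$ with target sets $T_{x_0} = V_i \cap \mathscr{V}_i$ and $T_{x_{m'}} = V_j \cap \mathscr{V}_j$ (merged into a single $U_i$ when $i=j$). Since $\mathscr{F}$ has maximum degree $2$ and components of size at most $\max(m'+1,2)$, it is $O(1/L)$-separable, so all hypotheses of the blow-up lemma are satisfied. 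The resulting embedding of $\mathscr{F}^+$ contains an embedding of $P^+$, yielding a loose path of length $m'$ with endpoints in $V_i$ and $V_j$. The main technical step is the joint construction of $\mathscr{F}$, $\phi$, and the subsets $\mathscr{V}_s, \mathscr{V}_{st}$ meeting the blow-up lemma's exact-size equalities $|\phi^{-1}(s)| = |\mathscr{V}_s|$ and $e(\phi^{-1}(s), \phi^{-1}(t)) = |\mathscr{V}_{st}|$, for which the auxiliary matching $M$ is essential.
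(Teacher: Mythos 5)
Your proposal is correct in its essentials and runs on the same engine as the paper's proof --- Theorem \ref{weakblowup} applied to a padded, bounded-degree, separable $2$-graph containing the underlying path --- but the implementation is genuinely different. The paper confines the non-expansion vertices of the path to just two parts: it takes a bipartite auxiliary graph $H'$ spanning all of $X_i\cup X_j$ with exactly $|X_k|$ edges and applies the blow-up lemma to the whole triple, which forces the path to alternate between the two sides and hence a four-way case analysis on the parity of $m'$ and on whether $i=j$; the remaining cases are settled by greedily attaching one or two hyperedges using the degree part of half-superregularity. You instead let the path wander through all three parts (a near-balanced walk in $K_3$ with prescribed ends), pad with a dummy matching, and randomly slice the host into six pieces of exactly the right sizes via Lemma \ref{slicinglemma}(iii), with Lemma \ref{fairly} guaranteeing the anchor target sets, so that a single application of the blow-up lemma with $R=K_3$ handles every parity and the case $i=j$ uniformly. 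The paper's route buys a lighter setup (no random re-slicing, no balanced-colouring bookkeeping); yours buys the elimination of the case analysis and of the greedy extension steps. Two small remarks: a component of size $m'+1$ makes $\mathscr{F}$ only $\bigl((m'+1)/L\bigr)$-separable rather than $O(1/L)$-separable, but since $m'\ll m$ this is still below the constant $\mu$ in Theorem \ref{weakblowup}, so nothing breaks; and your exclusion of the configuration $m'=1$, $i=j$ is justified --- it is unrealizable in a $3$-partite host (the paper's own Case 4 implicitly needs $m'\ge 3$ there as well), so this is a defect of the lemma's stated hypothesis rather than of your argument.
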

\begin{proof}
We divide the proof into the following cases according to $m'$ and the values of $i,j$.

\begin{case}
$m'$ is odd and $i\neq j$.    
\end{case}
Let $\{i,j,k\}=\{1,2,3\}.$ Note that there exists a bipartite graph $H'$ with vertex set $X_i$ and $X_j$, such that the number of edges of $H'$ equals to $|X_k|$ and $H'$ contains a path with length $m'$ and endpoints $v_1,v_2$, moreover $H'$ is $\mu$-separable for some $\mu\ll d$. Using Lemma \ref{weakblowup}, we can embed a $1$-expansion of $H'$ into $G$. Moreover, $v_1,v_2$ are embedded into $V_i$ and $V_j$. Thus, we find a loose path with length $m'$, and endpoints $v_1\in V_i$ and $v_2\in V_j$.

\begin{case}
$m'$ is even and $i\neq j$.    
\end{case}
For a vertex $v_1 \in V_i$, we consider the set of vertices in $X_k$ that, together with $v_1$ and some vertex in $V_j$, form a hyperedge. Denote this set by $V_k$. We then embed a loose path of length $m'-1$ whose endpoints $v'_1 \in V_k$ and $v_2 \in V_j$. This is possible since $G$ is $(\varepsilon, d)$-(half)-superregular, implying that $|V_k|\geq d|X_k|$. By the same reasoning as in Case 1, such a path exists. Finally, adding the hyperedge formed by $v_1$, $v'_1$, and a suitable vertex in $V_j$ completes the desired path.

\begin{case}
$m'$ is even and $i=j$.    
\end{case}
Let \( \{i, k, l\} = \{1, 2, 3\} \).  
For a vertex \( v_1 \in V_i \), we consider the vertices in \( X_k \) (with \( k \neq i \)) that, together with \( v_1 \) and some vertex in \( V_l \), form a hyperedge. Denote this set by \( V_k \).  
We then embed a loose path of length \( m'-1 \) with endpoints \( v'_1 \in V_k \) and \( v_2 \in V_j \).  
Since \( G \) is \((\varepsilon, d)\)-(half)-superregular and \( |V_k| \geq d|X_k| \), such a path exists by the same argument as in Case 1.  
Finally, adding the hyperedge formed by \( v_1 \), \( v'_1 \), and a suitable vertex in \( V_j \) completes the required path.

\begin{case}
$m'$ is odd and $i=j$.    
\end{case}
Let \( \{i, k, l\} = \{1, 2, 3\} \).  
For a vertex \( v_1 \in V_i \), we consider a 2-edge loose path of the form \( (v_1yz)(zx'y') \), where \( y, y' \in X_k \), \( x \in X_i \), and \( z \in X_l \).  
Let \( V_k \) be the set of vertices in \( X_k \) (with \( k \neq i \)) that can serve as \( y' \).  
We then embed a loose path of length \( m'-2 \) with endpoints \( v'_1 \in V_k \) and \( v_2 \in V_j \).  
Since \( G \) is \((\varepsilon, d)\)-(half)-superregular and \( |V_k| \geq d|X_k| \), such a path exists by the same reasoning as in Case 1.  
Combining this path with the initial 2-edge loose path yields the desired path.

\end{proof}

\section{Proof of Theorem \ref{oddandeven}}\label{section-proof}
In this section, we present the proof of our main result, Theorem \ref{oddandeven}.


This subsection is devoted to the proof of Theorem \ref{oddandeven}.
Following the strategy of Khan \cite{khan2011spanning}, we first simplify the problem to the case where each loose cycle in $\mathcal{C}$ has at most $\log n$ vertices, as stated in Theorem \ref{reducethm0}. 
The complementary case, where $\mathcal{C}$ contains a loose cycle with more than $\log n$ vertices, is treated separately in Appendix \ref{appendix}.
The main result of this section is the following theorem.

\begin{theorem}\label{reducethm0}
For every $\eta > 0$, there exists an integer $n_0 = n_0(\eta)$ such that the following holds for any $n \geq n_0$.  
Let $\mathcal{C}$ be a $3$-graph consisting of vertex-disjoint loose cycles $C_{n_1}, C_{n_2}, \ldots, C_{n_r}$, where $\sum_{i=1}^{r} n_i = n$ and $n_i \leq \log n$ for each $i \in [r]$. Let $k$ denote the number of loose cycles with odd lengths.  
If $\mathcal{H}$ is an $n$-vertex $3$-graph with $\delta_2(\mathcal{H}) \geq (n + 2k)/4 + \eta n$, then $\mathcal{H}$ contains $\mathcal{C}$ as a spanning subhypergraph.
\end{theorem}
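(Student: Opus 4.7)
The plan is to follow the three-step strategy described in the proof sketch, using the hypothesis $n_i \leq \log n$ to control the bandwidth of the pieces fed to the transversal blow-up lemma. Fix constants $0 < 1/n \ll \varepsilon \ll d \ll 1/L_0 \ll \eta$ and apply the degree form of the weak hypergraph regularity lemma (Lemma~\ref{degweakreg}) to $\mathcal{H}$ to obtain a partition $V_0, V_1, \ldots, V_L$ together with a spanning subhypergraph $\mathcal{H}'$. Let $\mathcal{R}$ denote the reduced $3$-graph. By Lemma~\ref{degreeinherit}, after discarding a vanishing fraction of cluster-pairs, the minimum codegree of $\mathcal{R}$ is at least $\bigl(\tfrac{n+2k}{4n} + \tfrac{\eta}{2}\bigr) L$. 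Choose $(n, k, \eta)$-good integers $a, b$ via Fact~\ref{goodrange}.

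In $\mathcal{R}$ I would find an almost-perfect tiling by a suitable auxiliary $3$-graph whose components, after blow-up, can host loose cycles in the proportion $a : a : b$; existence follows from the inherited codegree condition together with Lemma~\ref{step3graph} applied at the reduced level. For each tiled component, Lemma~\ref{regtosupreg} upgrades the induced triple of clusters to $(2\varepsilon, d/2)$-half-superregularity, and a uniform random slicing (Lemma~\ref{slicinglemma}(iii)) combined with the fair-splitting guarantee of Lemma~\ref{fairly} rescales each triple to integer sizes in the ratio $a : a : b$ while preserving half-superregularity. This produces a family $\{T_j = (X_j, Y_j, Z_j) : j \in [t]\}$ of pairwise vertex-disjoint half-superregular triples covering all but a set $W$ of at most $\varepsilon^{1/2} n$ vertices.

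Step 2 processes $C_{n_1}, C_{n_2}, \ldots, C_{n_r}$ in three phases. Phase~(a) uses $O(|W| / \log n)$ short loose cycles to cover every vertex of $W$; each such cycle is built greedily one edge at a time by applying the codegree condition on $\mathcal{H}$ to extend partial loose paths through the prescribed exceptional vertices. Phase~(b) embeds further cycles inside the triples, consuming a small fraction of each $T_j$ so that the residual class sizes still lie in the proportion $(a : a : b)$. Phase~(c) invokes Lemma~\ref{step3graph} (together with Lemma~\ref{colorcycle}) to partition the remaining cycles into one piece $F_j$ per triple, where each $F_j$ is a vertex-disjoint union of $2$-graph cycles admitting a proper $3$-coloring whose class sizes match the residual sizes $(|X_j|, |Y_j|, |Z_j|)$. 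Step 3 then applies the weak $3$-graph blow-up lemma (Theorem~\ref{weakblowup}) inside each $T_j$, using $F_j$ as the $2$-graph input ($F_j$ has bounded maximum degree and is $O(\log n / n)$-separable, since every component has at most $\log n$ vertices), $\phi$ as the proper $3$-coloring from Lemma~\ref{step3graph}, and the target sets $U_i, T_x$ to align the endpoints of $F_j^+$ with vertices already occupied in phases~(a) and~(b). The union of the resulting copies of $F_j^+$ yields the desired spanning subhypergraph $\mathcal{C}$.

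The main obstacle is the bookkeeping in Step 2. The $(n, k, \eta)$-good definition is calibrated so that the residual class sizes in every triple can be realized as the color classes of a union of cycles from the list, but this requires delicate accounting. In particular, one must simultaneously (i) absorb $W$ without distorting the $(a : a : b)$ proportions, (ii) compensate for the parity constraints created by the $k$ odd cycles (which is precisely what the extra $2k$ term in the codegree hypothesis of Theorem~\ref{reducethm0} provides), and (iii) ensure that every cycle is assigned to a triple whose residual shape it can fit. Once this matching is arranged, the half-superregularity delivered by Step 1 lets the final blow-up go through almost as a black box.
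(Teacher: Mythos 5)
Your outline follows the same three-step strategy as the paper, but two essential steps are missing or mis-justified. First, the almost-perfect tiling of the reduced hypergraph $\mathcal{R}$ by pieces that blow up to triples in ratio $a:a:b$ does not follow from ``the inherited codegree condition together with Lemma~\ref{step3graph} applied at the reduced level'': Lemma~\ref{step3graph} concerns embedding $2$-graph cycles into a complete tripartite $2$-graph and says nothing about tiling a $3$-graph in which only most pairs have large degree. The paper needs Mycroft's tiling theorem (Lemma~\ref{H8exist}) to obtain an $\mathcal{A}^3_{a,2a+b}$-tiling of $\mathcal{R}$ covering all but $\psi L$ clusters, and only then splits each tile into $2(a+b)$ half-regular triples of the desired proportions. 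Without this (or an equivalent tiling result robust to the exceptional pairs allowed by Lemma~\ref{degreeinherit}), your Step 1 does not get off the ground.

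Second, and more importantly, the part you label ``the main obstacle \ldots delicate accounting'' is precisely the mathematical core of the proof, and you do not supply it. Matching the cycle list to the triples requires: degenerate-case preprocessing (Proposition~\ref{SCS}); making every triple order even by embedding cycles that straddle two triples, which in turn needs a connectivity statement between triples (Claims~\ref{connect} and~\ref{turneven}); an approximate pre-partition controlling both the total size and the number of odd cycles assigned to each triple (Claim~\ref{yuhuafen}); and an exact correction phase, carried out either by embedding cycles of length at least $8$ across consecutive triples via Lemma~\ref{pathembed}, or, when all remaining cycles are very short, by the gcd-based ``A-transformations'', which again needs Proposition~\ref{SCS} to exclude the $\{8,12\}$ and single-length cases (Claim~\ref{embedlast}). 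These are exactly the places where the hypothesis $\delta_2(\mathcal{H})\geq (n+2k)/4+\eta n$ is consumed, through the $(n,k,\eta)$-good ratio and the bound $k(1+\eta^3)/N$ on odd cycles per piece that feeds the minimum-degree requirement of Lemma~\ref{step3graph}; deferring all of this as bookkeeping leaves the theorem unproven. Your phase (a) for absorbing the exceptional set has a similar gap: a greedy use of the raw codegree condition does not guarantee that the linking vertices lie inside the sets $Z_i$ and are spread so that no triple loses more than about $\sqrt{\xi}m$ vertices; the paper needs Claim~\ref{commonneighbor} (built from Lemma~\ref{fairly}) precisely for this.
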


In the remainder of the paper, we fix positive constants that satisfy the following hierarchy:  
\begin{equation}\label{equa1}
0 < 1/n \ll 1/L_0 \ll \varepsilon \ll d \ll \psi \ll \eta \ll 1,    
\end{equation} 
where $L_0$ is an integer and we choose these constants successively from right to left.
Let $\mathcal{C}$ be a $3$-graph consisting of vertex-disjoint loose cycles $C_{n_1}, C_{n_2}, \ldots, C_{n_r}$, where $\sum_{i=1}^{r} n_i = n$ and $ n_i \leq \log n$ for each $i \in [r]$. 
The number of loose cycles with odd lengths in $\mathcal{C}$ is denoted by $k$.
Suppose that $\mathcal{H}$ is an $n$-vertex $3$-graph with minimum codegree at least $(n + 2k)/4 + \eta n$.
Before proceeding to the main proof, we first prove three special cases. Let $c_x$ denote the number of loose cycles with order $x$. 

\begin{proposition}\label{SCS}
If one of the following three conditions holds, then Theorem \ref{reducethm0} holds.\\
$(1)$ There exists an integer $s$ such that $c_{s}\geq (1-\eta/2)n/s$;\\
$(2)$ $k\geq (1-\eta/5)n/6$;\\
$(3)$ $8c_8+12c_{12}\geq (1-\eta^2)n$.
\end{proposition}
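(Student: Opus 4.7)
The plan is to reduce each of the three conditions to the known codegree $C_s$-factor theorem of Mycroft (Theorem~\ref{MYC}), after first disposing of a small set of ``atypical'' cycles via a direct greedy embedding. In Case~$(1)$, let $\mathcal{C}_{\mathrm{atyp}}$ collect the non-$C_s$ cycles in $\mathcal{C}$; by hypothesis $|V(\mathcal{C}_{\mathrm{atyp}})|\leq\eta n/2$, and every component has $\leq\log n$ vertices. I would embed them into $\mathcal{H}$ one at a time: since the used set $U$ always has size $\leq\eta n/2+\log n$ while any codegree in $\mathcal{H}$ is at least $(n+2k)/4+\eta n$, a common neighbor outside $U$ always exists, which lets one extend a partial loose path and also form its closing edge. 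After this embedding, the residual hypergraph $\mathcal{H}'=\mathcal{H}[V\setminus U]$ has exactly $c_s s$ vertices and minimum codegree at least $(n+2k)/4+\eta n/2$. A short bookkeeping, handled separately in the regimes $s\equiv 0\pmod 4$ (where $k$ is just the residual count of atypical odd cycles) and $s\equiv 2\pmod 4$ (where $k$ equals $c_s$ plus that residual), shows that in both cases the codegree clears the tight threshold $\lceil s/4\rceil c_s$ of Theorem~\ref{MYC} by $\Omega(\eta n)$, which yields the desired $C_s$-factor of $\mathcal{H}'$.

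Case~$(2)$ follows by counting. Among odd-length loose cycles, the smallest has $6$ vertices and the next-smallest has $10$. Writing $k=c_6+k'$ with $k'$ the number of odd cycles of length at least $5$, one has $6c_6+10k'\leq n$; combined with $6k\geq(1-\eta/5)n$ this gives $k'\leq\eta n/20$, and hence $c_6\geq(1-\eta/5)n/6-\eta n/20=(1-\eta/2)n/6$. So Case~$(2)$ is subsumed by Case~$(1)$ with $s=6$.

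In Case~$(3)$, apply the same greedy embedding of the $\leq\eta^2 n$ atypical vertices to obtain a residual $\mathcal{H}'$ on $8c_8+12c_{12}$ vertices with codegree at least $n/4+\eta n/2$. If $8c_8\leq\eta^2 n$ (resp.\ $12c_{12}\leq\eta^2 n$), absorbing those few $C_8$'s (resp.\ $C_{12}$'s) into the atypical family collapses the problem back into Case~$(1)$ with $s=12$ (resp.\ $s=8$). Otherwise both $|V_8|:=8c_8$ and $|V_{12}|:=12c_{12}$ are $\Omega(\eta^2 n)$, and the key observation is that Mycroft's threshold for both $s=8$ and $s=12$ equals $|V|/4$. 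The plan is to partition $V(\mathcal{H}')$ uniformly at random into $V_8$ and $V_{12}$ of the correct sizes; by hypergeometric concentration and a union bound over $O(n^2)$ pairs, with positive probability each $\mathcal{H}'[V_j]$ has codegree at least $|V_j|/4+\Omega(\eta|V_j|)$, comfortably dominating the $O(\sqrt{|V_j|\log n})$ Chernoff deviation. Theorem~\ref{MYC} applied separately in each part then produces the required $C_8$- and $C_{12}$-factors.

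The main obstacle is the tightness of the codegree bookkeeping in Case~$(1)$ when $s\equiv 2\pmod 4$: there Mycroft's threshold is inflated by $c_s/2$ relative to the even-length regime, while the $k$ in the hypothesis already equals $c_s$ plus a small residual, so one must verify that the $\eta n$ slack survives greedy removal without being absorbed by the $c_s/2$ inflation. A similar balancing act is needed in Case~$(3)$ to ensure both sides of the random partition simultaneously clear their thresholds, and to handle the degenerate regime where one of $c_8,c_{12}$ is too small for concentration by collapsing into Case~$(1)$.
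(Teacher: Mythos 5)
Your proposal is correct and follows essentially the same route as the paper: greedily embed the few atypical short cycles using the codegree condition, reduce $(2)$ to $(1)$ by the same counting $6c_6+10(k-c_6)\leq n$, and in $(3)$ dispose of the degenerate case via $(1)$ and otherwise randomly split into the $C_8$- and $C_{12}$-parts with hypergeometric concentration before invoking Theorem~\ref{MYC} (the paper cites its pancyclicity lemma for the greedy step, while you argue it directly, and your mod-$4$ bookkeeping for the threshold $\lceil s/4\rceil c_s$ matches the paper's use of $sc_s\leq n$ and $c_s\leq k$).
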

\begin{proof}
(1) If there exists an integer $s$ such that $c_s\geq(1-\eta/2)n/s$, then we have \[|V(\mathcal{C})\setminus \cup_{n_i=s}V(C_{n_i})|<
n-s(1-\eta/2)n/s =\eta n/2.\]
We can greedily embed all other loose cycles in $\mathcal{C}$ into $\mathcal{H}$ except those of order $s$ by Lemma \ref{pancyclic}. 
Let $\mathcal{H}'$ denote the resulting graph by removing these vertices from $\mathcal{H}$. 
Then, we have $\delta_{2}(\mathcal{H}')\geq(n+2k)/4+\eta n/2$.
By Theorem \ref{MYC}, $\mathcal{H}'$ contains a $C_s$-factor and thus Theorem \ref{reducethm0} holds.  

(2) If $k\geq(1-\eta/5)n/6$, then we have $c_6 \geq (1-\eta/2)n/6$ as $6c_6+10(k-c_6)\leq n$. Thus Theorem \ref{reducethm0} holds by (1).  

(3) Otherwise, suppose $8c_8+12c_{12}\geq(1-\eta^2)n$.
Then we have $c_8\geq\eta^2 n$ and $c_{12}\geq\eta^2 n$. 
If $c_8<\eta^2 n$ or $c_{12}<\eta^2 n$, then we have $c_8\geq(1-13\eta^2)n/8$ or $c_{12}\geq(1-9\eta^2)n/12$. There is a contradiction with (1).
By Lemma \ref{pancyclic}, we can greedily embed all other loose cycles into $\mathcal{H}$ except those of order 8 and 12. 
Denote the resulting subgraph by $\mathcal{H}_1$.
We randomly partition $V(\mathcal{H}_1)$ into $V_1$ and $V_2$, with $|V_1| = 8c_8$ and $|V_2| = 12c_{12}$. By a similar argument as in Appendix \ref{appendix}, there exists a partition $V = V_1 \cup V_2$ such that  
$$\delta_2(\mathcal{H}[V_1]) \geq (1/4 + \eta/3)|V_1| \quad \text{and} \quad \delta_2(\mathcal{H}[V_2]) \geq (1/4 + \eta/3) |V_2|.$$  
Then, $C_8$ can be embedded into $\mathcal{H}[V_1]$ and $C_{12}$ can be embedded into $\mathcal{H}[V_2]$ by Theorem \ref{MYC}.
Thus Theorem \ref{reducethm0} holds.  
\end{proof}   

\subsubsection{Step 1}
We first apply the weak hypergraph regularity lemma (Lemma \ref{degweakreg}) with parameters $\varepsilon, L_0, d$ satisfying (\ref{equa1}). By Lemma \ref{degweakreg}, we obtain a partition of $V(\mathcal{H})$ into disjoint clusters $V_0, V_1, \ldots, V_L$ such that $\left|V_0\right| \leq \varepsilon n$, and $\left|V_1\right|=\cdots=\left|V_L\right|=: m$. Note that $m$ satisfies $0.9n/L_0<m<1.1n/L_0$ and $L_0\ll n/L_0\ll n$.
Additionally, we obtain a spanning subhypergraph $\mathcal{H}'$ of $\mathcal{H}$ satisfying the following properties:

\begin{enumerate}[$(i)$]
\item $d_{\mathcal{H'}}(v)>d_{\mathcal{H}}(v)-(d+\varepsilon) n^2$ for each $v \in V$;
\item Every edge of $\mathcal{H'}$ with more than one vertex in a single cluster $V_i$ for some $i \in[L]$ has at least one vertex in $V_0$;
\item For all triples $\{i, j, k\} \in\binom{L}{3}$, we have that $\left(V_i, V_j, V_k\right)_{\mathcal{H'}}$ is either empty or $(\varepsilon, d)$-regular.
\end{enumerate}

The reduced hypergraph $\mathcal{R}$ has vertices $V_1, \ldots, V_L$ and hyperedges between $V_{i}, V_{j}, V_{k}$ precisely when $\left(V_{i}, V_{j}, V_{k}\right)_{\mathcal{H}'}$ is $(\varepsilon, d)$-regular for $\{i,j,k\}\in\binom{L}{3}$. By Lemma \ref{degreeinherit}, all but at most $36d^{1/3}L^2$ of the $2$-tuples of vertices in $\mathcal{R}$ have degree at least $\left(\frac{n+2k}{4n}+\eta -4d^{1/3}\right)L$.

Let $q, p$ be positive integers with $q>p$. Define the $3$-graph $\mathcal{A}_{p, q}^3$ as follows. The vertex set $V\left(\mathcal{A}_{p, q}^3\right)$ is the union of disjoint sets $A_1, \ldots, A_{q-p}$ and $B$, where $\left|A_j\right|=2$ for each $j \in[q-p]$ and $|B|=2p$. Any $3$-tuple of the form $\{x\} \cup A_j$ with $x \in B$ is an edge of $\mathcal{A}_{p, q}^3$ for $j \in\left[q-p\right]$. In particular, we have $\left|\mathcal{A}_{p, q}^3\right|=2q$. 
Mycroft \cite{Myc2016JCTA} established a more generalized result holding for all $k$-graphs, but we present here only the specialized version for 3-graphs.

\begin{lemma}[\cite{Myc2016JCTA}]\label{H8exist}
Suppose that $0<1/m\ll\theta\ll\psi\ll 1/q, 1/p, 1/3$ and that $G$ is a $3$-graph on vertex set $[m]$ such that $\operatorname{deg}_G(S)>(p/q+\theta) m$ for all but at most $\theta m^{2}$ sets $S\in\binom{[m]}{2}$. Then $G$ admits an $\mathcal{A}_{p, q}^3$-tiling $\mathcal{F}$ such that $|V(\mathcal{F})|\geq(1-\psi) m$.
\end{lemma}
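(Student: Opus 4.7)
The plan is to reformulate the tiling problem as a near-perfect matching problem in an auxiliary $2q$-uniform hypergraph $\mathcal{G}$, and then apply the R\"odl nibble (or Pippenger--Spencer theorem). Let $\mathcal{G}$ have vertex set $[m]$ and hyperedges consisting of the vertex sets of copies of $\mathcal{A}_{p,q}^3$ in $G$. A matching of $t$ hyperedges in $\mathcal{G}$ corresponds exactly to an $\mathcal{A}_{p,q}^3$-tiling of $G$ covering $2qt$ vertices, so it suffices to exhibit a matching in $\mathcal{G}$ covering at least $(1-\psi) m$ vertices.

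The bulk of the work is verifying the two hypotheses required by the nibble: near-regularity and small pairwise codegree of $\mathcal{G}$. For near-regularity, I count copies of $\mathcal{A}_{p,q}^3$ in $G$ containing a fixed vertex $v$ by specifying the \emph{role} of $v$ (a $B$-vertex or a vertex in some $A_j$), choosing the remaining $2q-1$ vertices, and verifying the $2p(q-p)$ required edges of $G$. Using the codegree hypothesis iteratively together with inclusion--exclusion, the number of valid configurations in each role is $(1 \pm o(1))$ times a product of density factors of order $(p/q + \theta)^{2p(q-p)}$, giving a count of $\Theta(m^{2q-1})$ per vertex per role. The at most $\theta m^2$ exceptional pairs contribute only $O(\theta m^{2q-1}) = o(m^{2q-1})$ copies and can be absorbed into the error. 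Summing over both roles yields that every vertex $v$ satisfies $\deg_{\mathcal{G}}(v) = (1 \pm o(1)) D$ for some common $D = \Theta(m^{2q-1})$. A similar, simpler argument shows that any two vertices $u,v \in [m]$ lie together in $O(m^{2q-2}) = o(D)$ common hyperedges of $\mathcal{G}$.

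With near-regularity and small codegree verified, a standard application of the R\"odl nibble produces a matching in $\mathcal{G}$ of size at least $(1-\psi) m / (2q)$, which furnishes the required tiling $\mathcal{F}$. The main obstacle is executing the near-regularity count with sufficient precision: $B$-vertices and $A$-vertices satisfy structurally different combinatorial constraints (a $B$-vertex must be joined with $q-p$ pairs from its link graph, whereas an $A$-vertex only constrains the $2p$ vertices of $B$ through its codegree with its partner), so $\deg_{\mathcal{G}}(v)$ is a sum of two contributions whose individual sizes depend on $v$. Confirming that the total is $(1 \pm o(1)) D$ uniformly in $v$ requires a careful averaging argument and uses the slack provided by the hierarchy $\theta \ll \psi \ll 1/p, 1/q$; once this is in hand, the nibble step is standard.
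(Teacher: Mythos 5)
This lemma is not proved in the paper at all: it is quoted directly from Mycroft \cite{Myc2016JCTA} (specialized from his $k$-partite $k$-graph packing theorem), so your attempt has to stand on its own, and as written it has a genuine gap at its central step. The hypothesis of the lemma is only a \emph{one-sided} bound, $\deg_G(S)>(p/q+\theta)m$ for most pairs $S$; it gives no upper bound on codegrees and no quasi-randomness. Consequently your claim that every vertex of the auxiliary $2q$-uniform hypergraph $\mathcal{G}$ has degree $(1\pm o(1))D$ for a common $D$ is false in general: take, say, $V=V_1\cup V_2$ with all triples inside $V_1$ present and triples meeting $V_2$ present with density about $p/q+\theta$. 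This $G$ satisfies the codegree hypothesis, yet vertices of $V_1$ lie in more copies of $\mathcal{A}^3_{p,q}$ than vertices of $V_2$ by a constant factor bounded away from $1$. Inclusion--exclusion cannot repair this, because the lower bound on codegrees never yields matching upper bounds, so the ``product of density factors $(p/q+\theta)^{2p(q-p)}$'' computation only bounds the count from below.

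This is fatal for the proposed route, not merely a technicality: Pippenger--Spencer/R\"odl nibble needs degrees $(1\pm o(1))D$ together with $o(D)$ codegrees, and constant-factor regularity is genuinely insufficient for an almost-perfect matching (a $2q$-uniform hypergraph in which every edge uses one vertex from a set of size $m/100$ can have all degrees within constant factors of each other and tiny codegrees, yet no matching covering more than a small constant fraction of the vertices). So the difficulty of the lemma is exactly the inhomogeneity that your averaging step waves away; the slack $\theta\ll\psi$ does not help here. A correct proof has to exploit the codegree structure of $G$ itself rather than only copy counts, e.g.\ Mycroft's actual argument via the (weak) hypergraph regularity lemma combined with fractional $\mathcal{A}^3_{p,q}$-tilings of the reduced hypergraph (the exponent $p/q$ arises as a fractional cover/matching threshold, essentially LP duality as in Proposition \ref{extremal}), after which regular triples are tiled almost perfectly; alternatively an absorption-type argument. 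If you want to salvage a nibble-style finish, you must first produce a near-regular structure (for instance inside regular tuples given by the regularity lemma), which is precisely the part your proposal omits.
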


Applying Lemma \ref{H8exist} to $\mathcal{R}$, we obtain the following result.

\begin{claim}\label{H8tiling}
There exists an $\mathcal{A}_{a, 2a+b}^3$-tiling $\mathcal{M}$ in $\mathcal{R}$ such that all but at most $\psi L$ vertices in $\mathcal{R}$ are covered by $\mathcal{M}$.
\end{claim}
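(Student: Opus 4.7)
The plan is to verify the hypotheses of Lemma \ref{H8exist} for the reduced $3$-graph $\mathcal{R}$ with parameters $p = a$ and $q = 2a + b$, where the pair $(a, b)$ is chosen to be $(n, k, \eta)$-good. Fact \ref{goodrange} allows us to take $a = \lceil 100/\eta \rceil$ together with some integer $b$ with $1 \leq b \leq \lceil 200/\eta \rceil$. Since both $a$ and $b$ are of order $1/\eta$ and the hierarchy in \eqref{equa1} gives $\psi \ll \eta$, we are in the regime $\psi \ll 1/q,\, 1/p,\, 1/3$ required by Lemma \ref{H8exist}.

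The key arithmetic step is to convert the lower bound $\frac{b}{a} \geq \frac{2n-4k-0.08\eta n}{n+2k+0.04\eta n}$ from condition (i) of being $(n, k, \eta)$-good into an upper bound on $a/(2a+b)$. A direct computation yields
\begin{equation*}
2 + \frac{b}{a} \;\geq\; \frac{4n}{n+2k+0.04\eta n}, \qquad \text{hence} \qquad \frac{a}{2a+b} \;=\; \frac{1}{2 + b/a} \;\leq\; \frac{n+2k}{4n} + 0.01\eta.
\end{equation*}
Next I would apply Lemma \ref{degreeinherit} with $c = (n+2k)/(4n) + \eta$; note $d \ll c^3/64$ since $c \geq \eta$ and $d \ll \eta$. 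This gives that all but at most $36 d^{1/3} L^2$ pairs $\{V_i, V_j\}$ in $V(\mathcal{R})$ satisfy
\begin{equation*}
\deg_{\mathcal{R}}(V_i, V_j) \;\geq\; \Bigl(\tfrac{n+2k}{4n} + \eta - 4d^{1/3}\Bigr)L \;\geq\; \Bigl(\tfrac{a}{2a+b} + 0.99\eta - 4d^{1/3}\Bigr)L.
\end{equation*}

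Pick an auxiliary constant $\theta$ with $36 d^{1/3} \leq \theta \ll \psi$ (possible since $d \ll \psi$ by \eqref{equa1}); then the displayed lower bound becomes at least $(a/(2a+b) + \theta)L$, and the number of violating pairs is at most $\theta L^2$. Applying Lemma \ref{H8exist} to $\mathcal{R}$ with the parameters $\theta, \psi, p = a, q = 2a+b$ produces an $\mathcal{A}_{a, 2a+b}^3$-tiling $\mathcal{M}$ covering all but at most $\psi L$ vertices, as required. The only subtle point is the choice of $(a, b)$: Fact \ref{goodrange} is designed precisely so that the target ratio $a/(2a+b)$ sits just slightly below the density implied by the codegree condition, leaving the $\approx 0.99\eta$ slack needed to absorb both the regularity error $O(d^{1/3})$ and the $\theta$-gap demanded by Lemma \ref{H8exist}. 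Beyond this bookkeeping, the claim reduces to a direct invocation of the two cited lemmas.
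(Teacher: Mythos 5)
Your proposal is correct and follows exactly the paper's (largely implicit) argument: the paper obtains Claim \ref{H8tiling} by the same direct application of Lemma \ref{H8exist} with $p=a$, $q=2a+b$, using Lemma \ref{degreeinherit} for the codegree inheritance, and your computation $\frac{a}{2a+b}\leq\frac{n+2k}{4n}+0.01\eta$ from the $(n,k,\eta)$-goodness of $(a,b)$ is precisely the bookkeeping the paper leaves unstated. No issues.
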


By moving at most $\psi Lm\leq \psi n$ vertices into $V_0$ from uncovered clusters, we obtain an $\mathcal{A}_{a, 2a+b}^3$-factor in $\mathcal{R}$, which we still denote by $\mathcal{M}$. Now, we have $|V_0|\leq(\varepsilon+\psi )n$.

For each $\mathcal{A}_{a, 2a+b}^3 \in \mathcal{M}$, we denote the clusters in $\mathcal{A}_{a, 2a+b}^3$ by $A_{i,1}$ and $A_{i,2}$ for each $i \in [a+b]$, and by $B_j$ for each $j \in [2a]$. 
Let $B$ be the union of all $B_j$ for $j \in [2a]$. 
We claim that each 3-partite 3-graph $(A_{i,1}, A_{i,2}, B)_{\mathcal{H}'}$ is $(2a \varepsilon, d/2a)$-half-regular. Since for any subsets $A_1 \subseteq A_{i,1}$, $A_2 \subseteq A_{i,2}$, and $B_0 \subseteq B$ satisfy

\[
|A_1|\geq 2a\varepsilon |A_{i,1}|,\quad|A_2|\geq 2a\varepsilon |A_{i,2}|\quad\text{and}\quad|B_0| \geq 2a \varepsilon |B|,
\]
we have
\[
d(A_1, A_2, B_0) = \frac{\sum_{j \in [2a]} e(A_1, A_2, B_0 \cap B_j)}{|A_1||A_2||B_0|}\geq \frac{d|A_1||A_2|}{2a|A_1| |A_2|} = d/2a.
\]
The inequality holds because for any $j \in [2a]$ if $|B_0 \cap B_j| \geq \varepsilon |B_j|$, then $e(A_1, A_2, B_0\cap B_j)\geq d|A_1||A_2||B_0 \cap B_j|$. When $|B_0| \geq 2a \varepsilon |B|$ there exists $j'\in [2a]$ such that $|B_0\cap B_{j'}|\geq |B_0|/2a\geq\varepsilon |B_{j'}|$, thus $\sum_{j\in [2a]} e(A_1,A_2,B_0\cap B_j)\geq e(A_1,A_2,B_0\cap B_{j'})\geq d|A_1||A_2||B_0\cap B_{j'}|$ and $|B_0\cap B_{j'}|/|B_0| \geq 1/2a$, which implies the inequality.

By moving, if necessary, at most $(a+b)$ vertices from each cluster into $V_0$, we ensure that the number of vertices in each cluster becomes divisible by $(a+b)$.
We define the size of each cluster as $m_1$. 
By Lemma \ref{slicinglemma}, we know that $(A_{i,1}, A_{i,2}, B)_{\mathcal{H}'}$ is $(4a\varepsilon, d/4a)$-half-regular.
Now, we uniformly and randomly partition $B$ into $2(a + b)$ subsets with equal size, labeling them as $Y_{i,1}$ and $Y_{i,2}$ for each $i\in[a+b]$. 
Meanwhile, for each \( i \in [a + b] \), we uniformly and randomly partition every \( A_{i,1} \) into two subsets \( X_{i,1} \) and \( Z_{i,2} \), and partition every \( A_{i,2} \) into two subsets \( X_{i,2} \) and \( Z_{i,1} \) such that \( |X_{i,1}| : |Z_{i,2}| = |X_{i,2}| : |Z_{i,1}| = a : b \).  
For each \( \mathcal{A}^3_{a,2a+b} \in \mathcal{M} \), after the random partitioning, we obtain \( 2(a + b) \) triples \( (X_{i,j}, Y_{i,j}, Z_{i,j}) \) for $i\in [a + b]$ and $j\in[2]$, where  
\[
|X_{i,j}| = |Y_{i,j}| = \frac{a m_1}{a + b} \quad \text{and} \quad |Z_{i,j}| = \frac{b m_1}{a + b}.
\]
Denote $N=2(a+b)|\mathcal{M}|$.
Take all these triples together and relabel them by 
\( (X_i, Y_i, Z_i) \) for \( i \in [N] \). 
By Lemma~\ref{slicinglemma} and the union bound argument, there exists a partition such that the 3-partite 3-graph \( (X_i, Y_i, Z_i)_{\mathcal{H}'} \) is \((8a\varepsilon, d/8a)\)-half-regular for each \( i \in [N] \), and we fix such a partition. Thus, after this random partitioning, we obtain $N$ disjoint \((8a\varepsilon, d/8a)\)-half-regular triples \( (X_i, Y_i, Z_i) \) that cover almost all vertices of \( \mathcal{H} \). In what follows, we denote these triples by $T_i:=(X_i,Y_i,Z_i)$ for $i\in[N]$.

We now establish an essential property regarding common neighbors that will be helpful when incorporating the exceptional set $V_0$, analogous to Proposition 7.1 in \cite{kuhn2006loose}.

\begin{claim}\label{commonneighbor}
Let $\eta_1 = \eta - 4d^{1/3}$ and $\eta'_1 = 0.99\eta_1$. For every pair of vertices $u, v \in V(\mathcal{H})$, there are at least $\frac{\eta'_1 N}{2(2a+b)}$ indices $i$ for which 
\[
|N_{\mathcal{H}}(u, v) \cap Z_i| \geq \frac{\eta'_1 |Z_i|}{2(2a+b)}.
\]
\end{claim}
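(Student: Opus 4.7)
The strategy is to bound $\sum_{i=1}^N |N_{\mathcal{H}}(u,v)\cap Z_i|$ from below using the codegree assumption together with $(n,k,\eta)$-goodness, and to derive a contradiction from the upper bound forced on this sum when the claim fails. Write $A$ for the union of all clusters of $\mathcal{H}$ that play the role of some $A_{i,j}$ inside a copy of $\mathcal{A}^3_{a,2a+b}$ in $\mathcal{M}$, and $B$ for the union of the remaining $B$-clusters; so $|A|=2(a+b)|\mathcal{M}|m_1$, $|B|=2a|\mathcal{M}|m_1$, and $|A|+|B|+|V_0|=n$.

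First I lower-bound $|N_{\mathcal{H}}(u,v)\cap A|$. The lower bound on $b/a$ in the definition of $(n,k,\eta)$-goodness rearranges to $a/(2a+b)\leq (n+2k+0.04\eta n)/(4n)$, whence $|B|\leq (n+2k)/4+0.01\eta n$. Combining with $|N_{\mathcal{H}}(u,v)|\geq (n+2k)/4+\eta n$, $|V_0|\leq(\varepsilon+\psi)n$, and $\varepsilon,\psi\ll\eta$ gives $|N_{\mathcal{H}}(u,v)\cap A|\geq 0.98\eta n$ for every pair $u,v$.

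Next I exploit the randomness of the split of each $A_{i,j}$ into its $X$- and $Z$-pieces. I extend the union-bound selection of the partition already performed in Step~1 to also guarantee, for some small $\gamma$ with $\gamma\ll\eta^2$, the following fair-splitting property: for every cluster $A_{i,j}$ and every pair $u,v\in V(\mathcal{H})$, the $Z$-piece of $A_{i,j}$ splits $N_{\mathcal{H}}(u,v)\cap A_{i,j}$ $\gamma$-fairly in the sense of Lemma~\ref{fairly}. Standard hypergeometric concentration gives an exponentially small failure probability per pair per cluster, which is more than enough to union-bound over the $\binom{n}{2}$ pairs and the $O(L)$ clusters; the bare $1/2$ in Lemma~\ref{fairly} is upgraded along the way. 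Summing the fair-splitting bound over all $A_{i,j}$'s (those with $|N_{\mathcal{H}}(u,v)\cap A_{i,j}|<\gamma m_1$ contribute negligibly) yields
\[
\sum_{i=1}^N |N_{\mathcal{H}}(u,v)\cap Z_i|\geq \frac{b}{a+b}|N_{\mathcal{H}}(u,v)\cap A| - O(\gamma n)\geq 0.97\cdot\frac{b\eta n}{a+b}.
\]

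Finally, suppose for contradiction that fewer than $\eta'_1 N/(2(2a+b))$ indices $i$ satisfy $|N_{\mathcal{H}}(u,v)\cap Z_i|\geq \eta'_1|Z_i|/(2(2a+b))$. Bounding $|N_{\mathcal{H}}(u,v)\cap Z_i|$ by $|Z_i|$ on the good indices and by $\eta'_1|Z_i|/(2(2a+b))$ on the bad, and using $N|Z_i|=2b|\mathcal{M}|m_1=b(n-|V_0|)/(2a+b)$, yields
\[
\sum_{i=1}^N |N_{\mathcal{H}}(u,v)\cap Z_i| < \frac{\eta'_1 N|Z_i|}{2a+b}=\frac{\eta'_1 b(n-|V_0|)}{(2a+b)^2}.
\]
Comparing with the lower bound reduces the contradiction to checking $0.97\eta(2a+b)^2>\eta'_1(a+b)$, which holds since $(2a+b)^2/(a+b)\geq 2a+b\geq 3$ (as $a,b\geq 1$) while $\eta'_1\leq\eta$. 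The main obstacle I anticipate is the fair-splitting step: upgrading Lemma~\ref{fairly}'s probability to high probability via a direct hypergeometric concentration estimate, and weaving it into the partition selection from Step~1; once that is in place, the remaining computation is elementary counting.
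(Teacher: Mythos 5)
Your proposal is correct, and it rests on the same two pillars as the paper's argument: the lower bound in the $(n,k,\eta)$-goodness condition caps the union of the $B$-clusters at $\tfrac{a}{2a+b}n\le (n+2k)/4+0.01\eta n$, so a $\Theta(\eta n)$ portion of $N_{\mathcal{H}}(u,v)$ is forced into the $A$-clusters, and the uniformly random $a:b$ split of each $A$-cluster transfers a proportional share of those neighbours into the $Z$-pieces. The difference is in the bookkeeping. The paper localizes the count to the elements of the tiling $\mathcal{M}$: it calls a cluster $(u,v)$-good when it carries roughly $\eta_1 m/(2(a+b))$ neighbours, shows many tiling elements contain at least $2a+1$ good clusters, and pigeonholes to find a good $A$-cluster in each such element before invoking Lemma~\ref{fairly}. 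You instead run one global double count of $\sum_{i\in[N]}|N_{\mathcal{H}}(u,v)\cap Z_i|$ (the $Z_i$ are disjoint, one per $A$-cluster) and get a contradiction from the assumption that few indices are good; the final check $0.97\eta(2a+b)^2>\eta_1'(a+b)$ holds with an enormous margin since $2a+b\ge 200/\eta$. Your treatment of the probabilistic step is also slightly more careful than the paper's: rather than the probability-$1/2$ statement of Lemma~\ref{fairly} invoked post hoc, you fold an exponentially strong hypergeometric tail bound (essentially Lemma~\ref{pro}) into the Step-1 union bound over all $\binom{n}{2}$ pairs and all clusters, which is legitimate and arguably tidier; note that since $L\le T_0$ is a constant, even the bare Lemma~\ref{fairly} applied independently to each cluster would give a positive success probability, so both routes close. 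The only thing the paper's per-element formulation buys is the explicit count $N'_{uv}$ of tiling elements containing a good $A$-cluster, but the statement you prove is the same and plugs into the rest of the proof identically.
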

\begin{proof}
Recall that $m$ is the size of cluster $V_i$ obtained by the regularity lemma.
Let $u,v$ be any two vertices in $\mathcal{H}$. We define a cluster $V_i$ ($i\in[L]$) to be \textit{$(u,v)$-good} if
\[|N_{\mathcal{H}}(u, v)\cap V_i|\geq \frac{\eta_1 m}{2((2a+b)-a)}.\]
Let $N'_{uv}$ denote the number of elements in the $\mathcal{A}^3_{a,(2a+b)}$-tiling $\mathcal{M}$ which contain at least $2a + 1$ $(u,v)$-good clusters. Then we have
\[
\frac{n + 2k}{4} + \eta_1 n \leq |N_{\mathcal{H}}(u, v)| \leq 2(2a+b)m N'_{uv} + |\mathcal{M}| \left(2am + 2((2a+b)-a) \frac{\eta_1 m}{2(2a + b) - 2a}\right).
\]
Since $n \geq 2(2a+b) |\mathcal{M}| m$, it follows that
\[
N'_{uv} \geq \frac{(2(2a+b) - 1) \eta_1 |\mathcal{M}|}{2(2a+b)}.
\]

Consider any $\mathcal{A}^3_{a,(2a+b)} \in \mathcal{M}$ which contains at least $2a + 1$ (original) clusters $V_t$ such that  
\[
|V_t\cap N_{\mathcal{H}}(u, v)|\geq\frac{\eta_1 m}{2(2a + b) - 2a}.
\]
Thus, for some $A_i$ in this $\mathcal{A}^3_{a,(2a+b)}$, we have
\[
|A_i\cap N_{\mathcal{H}}(u, v)|\geq\frac{\eta_1 m}{2(2a + b) - 2a}-(a+b).
\]
Since $A_i$ contains a subset of the form $Z_i$, applying Lemma \ref{fairly}, we conclude that with high probability, the intersection of $N_{\mathcal{H}}(u, v)$ with $Z_i$ has size at least
\[
\frac{\eta_1 |Z_i|}{2(2a+b)}-(a+b)>\frac{\eta'_1 |Z_i|}{2(2a+b)}.
\]
Thus, there exist at least $N' _{uv}\geq \frac{\eta'_1 N}{2(2a+b)}$ sets $Z_i$, each containing at least $\frac{\eta'_1 |Z_i|}{2(2a+b)}$ neighbors of the pair $u, v$, as required.
\end{proof}

By Lemma \ref{regtosupreg}, after removing at most $8a\varepsilon mL\leq 8a\varepsilon n$ vertices into $V_0$, the $3$-partite $3$-graph $(X_i, Y_i, Z_i)_{\mathcal{H'}}$ becomes $(16a\varepsilon, d/16a)$-half-superregular for each $i\in[N]$. Note that in this progress, $|X_i|:|Y_i|:|Z_i|$ doesn't changes when applying Lemma \ref{regtosupreg}.

Next, for each $i\in [N]$, we remove $\sqrt{m}$ additional vertices from $Z_i$ to $V_0$ (retaining the notation $(X_i, Y_i, Z_i)$ for the resulting triple). By Lemma \ref{slicinglemma} $(ii)$, we conclude that for each $i\in[N]$, the $3$-partite $3$-graph $(X_i, Y_i, Z_i)_{\mathcal{H'}}$ is $(32a\varepsilon, d/32a)$-half-superregular. 
Let $\varepsilon_1:=32a\varepsilon$ and $d_1:=d/32a$.

For each $i\in[N]$, we have
\[
|Z_i|+\sqrt{m} =\frac{b}{a}|X_i|=\frac{b}{a}|Y_i|.
\]
By Proposition \ref{SCS}, We may assume that $k< (1-\eta/5)n/6$. Note that $\frac{2n-4k-0.04\eta n}{n+2k+0.02\eta n}$ is a decrease function of $k$ and $\eta\ll 1$. Then
\[
\frac{b}{a}\geq \frac{2n-4k-0.04\eta n}{n+2k+0.02\eta n}>1+\eta^2.
\] 
Rearranging the terms, we have 
\[
|Z_i|-|X_i|\geq (1-\varepsilon)m_1\left(\frac{b-a}{a+b}\right)-\sqrt{m} \geq \eta^3 m_.
\]

Let $\eta' = \eta'_1 - 2\varepsilon$. Then, for every pair of vertices $u, v \in V(\mathcal{H})$, there are at least $\frac{\eta' N}{2(2a+b)}$ indices $i$ such that 
\[
|N_{\mathcal{H}}(u, v) \cap Z_i| \geq  \frac{\eta'|Z_i|}{2(2a+b)}.\]

Now, the number of vertices in $V_0$ is at most
\[
|V_0|\leq(8a \varepsilon+\psi)n+(a+b)L+N\sqrt{m}.
\]
Defining 
\[
\xi=\frac{(8a \varepsilon+\psi)n+(a+b)L+N\sqrt{m}}{n},
\]
we obtain $|V_0|\leq\xi n$ and $\xi\ll\eta', \eta$. Thus, almost all vertices, except for those in $V_0$, are covered by triples $T_1, T_2, \ldots, T_N$.

\subsubsection{Step 2}
We now proceed with the detailed embedding procedure. First, we embed some loose cycles to exhaust all the vertices in $V_0$. Let $x$ be the smallest integer satisfying
$\sum_{i=1}^{x} n_i/2 \geq |V_0|$.
Next, we remove $\sum_{i=1}^{x}n_i/2- |V_0|$ vertices from some sets $Z_i$ into $V_0$ such that
\[
|V_0| = \sum_{i=1}^{x} \frac{n_i}{2}.
\]

Let $v_1, v_2, \ldots, v_t$ be an enumeration of the vertices in $V_0$. We aim to embed the loose cycles $C_{n_1}, C_{n_2}, \ldots, C_{n_x}$ to cover all the vertices in $V_0$.
We embed the loose cycle $C_{n_1}$ such that it covers the vertices $v_1, v_2, \ldots, v_{n_1/2}$. 
We claim that for every pair $v_i v_{i+1}$ with $1\leq i\leq n_1/2$ (including the pair $v_1 v_{n_1/2}$), there exist distinct vertices $w_i$ such that $v_iw_iv_{i+1}\in E(\mathcal{H})$ (and $v_1 w_{n_1/2} v_{n_1/2}\in E(\mathcal{H})$) and $w_i\in \bigcup_{j=1}^N Z_j$, where $w_i$ are pairwise distinct.
Moreover, this holds for every loose cycle $C_{n_i}$ for $i \in [x]$. 
Note that $|V(T_i)| \leq 3m$ for each $i \in [N]$. Thus, the total number of required vertices $w_i$ is at most
$|V_0| \leq \xi n \leq 4 \xi N m.$
Furthermore, we have
\[
|V_0| \leq 4 \xi N m \leq \left(\sqrt{\xi} m \right) \left(\frac{\eta' N}{2(2a+b)}\right),
\]
where the final inequality holds because $\xi \ll \eta'$ and $a, b \leq 200/\eta +1 \leq 250/\eta'$. 
Hence, by Claim \ref{commonneighbor}, we can greedily select the vertices $w_i$ such that no single $Z_i$ has more than $\sqrt{\xi} m$ vertices consumed.

The following result asserts that after removing these vertices $w_i$, the $3$-partite $3$-graph $(X_i, Y_i, Z_i)_{\mathcal{H'}}$ remains sufficiently half-superregular for each $i \in [N]$.

\begin{claim}\label{extravtx}
Let $\varepsilon^* := 1.01\varepsilon_1$ and $d^* := d_1^2 / 16$. The resulting $3$-partite $3$-graph $(X_i, Y_i, Z_i)_{\mathcal{H}'}$ is $(\varepsilon^*, d^*)$-half-superregular for every $i \in [N]$.
\end{claim}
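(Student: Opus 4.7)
\textbf{Proof plan for Claim \ref{extravtx}.} The plan is to verify the two conditions of $(\varepsilon^*, d^*)$-half-superregularity directly, rather than invoking Lemma~\ref{slicinglemma}(ii), which would only yield $2\varepsilon_1$ in the first coordinate instead of the desired $1.01\,\varepsilon_1$. Let $Z_i^\circ$ denote the third cluster just before the $w_i$ are removed, so that $(X_i, Y_i, Z_i^\circ)_{\mathcal{H}'}$ is the $(\varepsilon_1, d_1)$-half-superregular triple produced at the end of Step~1, and let $Z_i \subseteq Z_i^\circ$ denote the set after removal. The crucial point is that $X_i$ and $Y_i$ are untouched while $|Z_i^\circ \setminus Z_i| \leq \sqrt{\xi}\,m$.

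First, I would control the relative perturbation. From $|Z_i^\circ| \geq \tfrac{b}{a+b}\,m_1 - \sqrt{m} \geq m/(2(a+b))$, the bound $a+b \leq \lceil 200/\eta \rceil$, and the fact that $\xi$ is essentially $\psi$ (the additional terms $(a+b)L/n$ and $N\sqrt{m}/n$ in $\xi$ are $o(1)$ as $n\to\infty$), the hierarchy (\ref{equa1}) chosen with $\psi$ sufficiently small in terms of $\eta$ yields $\alpha_i := |Z_i^\circ \setminus Z_i|/|Z_i^\circ| \leq 2(a+b)\sqrt{\xi} < 10^{-2}$; in particular $\alpha_i \ll d_1$.

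For the density part, I would take any $A \subseteq X_i$, $B \subseteq Y_i$, $C \subseteq Z_i$ with $|A| \geq \varepsilon^*|X_i|$, $|B| \geq \varepsilon^*|Y_i|$, $|C| \geq \varepsilon^*|Z_i|$, and note that $|C| \geq 1.01\,\varepsilon_1(1-\alpha_i)\,|Z_i^\circ| \geq \varepsilon_1\,|Z_i^\circ|$ by the choice $\varepsilon^* = 1.01\,\varepsilon_1$. The original $(\varepsilon_1, d_1)$-half-regularity of $(X_i, Y_i, Z_i^\circ)_{\mathcal{H}'}$ then yields $d_{\mathcal{H}'}(A, B, C) \geq d_1 \geq d^*$.

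For the vertex-degree part, each $z \in Z_i$ retains its codegree into $X_i \times Y_i$ unchanged, so $d(z) \geq d_1\,|X_i||Y_i| \geq d^*\,|X_i||Y_i|$. For $x \in X_i$ the degree decreases by at most $|Y_i|\cdot|Z_i^\circ \setminus Z_i| \leq \sqrt{\xi}\,m\,|Y_i|$, giving $d_{\mathrm{new}}(x) \geq (d_1 - \sqrt{\xi}\,m/|Z_i^\circ|)\,|Y_i||Z_i^\circ| \geq d^*\,|Y_i||Z_i|$ using $\sqrt{\xi}\,m/|Z_i^\circ| \leq \alpha_i \ll d_1$, and the case $y \in Y_i$ is symmetric. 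The only real obstacle is the constant bookkeeping, which is handled entirely by the hierarchy (\ref{equa1}); no new idea is required.
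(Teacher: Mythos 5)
Your density verification and the degree bound for vertices of $Z_i$ are fine and agree with what the paper does, but the degree bound for vertices $x\in X_i$ (and $y\in Y_i$) has a genuine gap, and it is precisely the point the paper's proof is built around. You assert $\alpha_i=|Z_i^\circ\setminus Z_i|/|Z_i^\circ|\ll d_1$, but the hierarchy (\ref{equa1}) has $d\ll\psi$ and $\xi$ is of order $\psi$, so in the worst case $\alpha_i$ is of order $(a+b)\sqrt{\psi}$, which is much \emph{larger} than $d_1=d/32a$: the constants are chosen from right to left, so $d$ is picked after $\psi$ and is much smaller than it, and no choice of ``$\psi$ sufficiently small in terms of $\eta$'' can make $\sqrt{\psi}\ll d$. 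Consequently $d_1-\sqrt{\xi}\,m/|Z_i^\circ|$ can be negative and your estimate $d_{\mathrm{new}}(x)\geq\bigl(d_1-\sqrt{\xi}\,m/|Z_i^\circ|\bigr)|Y_i||Z_i^\circ|\geq d^*|Y_i||Z_i|$ collapses. This is not mere bookkeeping: no argument using only the size bound $|Z_i^\circ\setminus Z_i|\leq\sqrt{\xi}\,m$ can work, because half-superregularity allows a vertex $x\in X_i$ whose link is concentrated on as few as $d_1|Z_i^\circ|\ll\sqrt{\xi}\,m$ vertices of $Z_i^\circ$; an unrestricted greedy choice of the $w_i$ could delete all of them and annihilate $d_{\mathcal{H}'}(x)$, so the claim is simply not true for an arbitrary removal of $\sqrt{\xi}\,m$ vertices per $Z_i$.

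The paper therefore does not treat the removal as arbitrary. It first sets aside, inside each $Z_i$, a uniformly random reserve $Z_i'$ with $|Z_i'|=\eta^4|Z_i|$ and applies Lemma \ref{slicinglemma}(iii) to get that $(X_i,Y_i,Z_i\setminus Z_i')_{\mathcal{H}'}$ is already $(1.01\varepsilon_1,d_1^2/16)$-half-superregular; then, re-running the common-neighbourhood argument (Lemma \ref{fairly} together with Claim \ref{commonneighbor}) inside the reserves, it shows the greedy absorption of $V_0$ can be carried out using vertices of the $Z_i'$ only, with at most $\sqrt{\xi}\,m$ consumed from each. Whatever is consumed, the surviving triple contains the half-superregular $(X_i,Y_i,Z_i\setminus Z_i')$, which is what supplies the vertex degrees for $X_i\cup Y_i$; the value $d^*=d_1^2/16$ (a square) is exactly the loss from the random slicing, something a direct perturbation argument like yours would not produce. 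To repair your proof you would need to import this extra idea — constrain where the $w_i$ are taken from — rather than estimate the damage of an arbitrary $\sqrt{\xi}\,m$-removal.
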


\begin{proof}
For each $Z_i$, we randomly select a subset $Z'_i \subseteq Z_i$ with $|Z'_i| = \eta^4 |Z_i|$. 
By Lemma \ref{slicinglemma} $(iii)$, with high probability, the $3$-partite $3$-graph $(X_i, Y_i, Z_i \setminus Z'_i)_\mathcal{H'}$ is $(1.01 \varepsilon_1, d_1^2 / 16)$-half-superregular for each $i\in [N]$.
By Lemma \ref{fairly} and Claim \ref{commonneighbor}, we know that for every pair of vertices $v, w \in V(H)$, there exist at least $\frac{\eta' N}{2(2a+b)}$ indices $i$ such that
\[
|N_H(v, w) \cap Z'_i| \geq \frac{\eta' |Z'_i|}{3(2a+b)} \gg \sqrt{\xi}m.
\]
This ensures that we can greedily select at most $\sqrt{\xi}m$ vertices from these $Z'_i$ to form loose cycles together with the vertices in $V_0$, then remove the selected vertices from $Z_i'$. 
The remaining unused vertices of $Z_i'$ are returned into $Z_i$, and we continue to denote the triple as $(X_i, Y_i, Z_i)$.

For any subsets $X_i''\subseteq X_i, Y_i''\subseteq Y_i, Z_i''\subseteq Z_i$ satisfying 
\[
|X_i''| \geq 1.01\varepsilon_1|X_i|,  \quad |Y_i''| \geq 1.01\varepsilon_1 |Y_i|,  \quad |Z_i''| \geq 1.01\varepsilon_1 |Z_i| \geq \varepsilon_1 \frac{bm_1}{a+b},
\]
we have $d(X_i'', Y_i'', Z_i'')\geq  d_1\geq d_1^2 / 16$, thereby ensuring that $(X_i, Y_i, Z_i)_\mathcal{H'}$ remains $(1.01 \varepsilon_1, d_1^2 / 16)$-half-regular for every $i\in [N]$. Moreover, for any vertex in $Z_i$, its degree is at least $d_1$, and for any vertex from $X_i$ or $Y_i$, its degree is at least $d_1^2 / 16$ in $(X_i, Y_i, Z_i)_\mathcal{H'}$ by Lemma \ref{slicinglemma}. Thus, the claim holds.
\end{proof}

We still denote the triples as $T_i=(X_i, Y_i, Z_i)$. Note that $m/(a+b)<|T_i|<4m$ for each $i\in [N]$. Thus for every $i\in [N]$, we have $1/|T_i| \ll \varepsilon^*$.

For every $X_i$ and $Y_j$, we aim to find additional vertices to form hyperedges together with one vertex in $X_i$ and another in $Y_j$. The following claim demonstrates that there exists sufficient connectivity between any two triples. While $\sqrt{n}$ is sufficient for our purposes, this bound could be improved to a linear term in $n$.
Given a 3-graph $\mathcal{H}$ and a vertex set $Z\subseteq V(\mathcal{H})$, we write $\deg_{\mathcal{H}}(x, y; Z)$ for the number of vertices $z\in Z$ such that $(xyz)\in E(\mathcal{H})$.

\begin{claim}\label{connect}

For any $X_i$ and $Y_j$ with $i,j\in[N]$, there exists a triple $T_h$ and $X^0_i \subseteq X_i$ with $|X^0_i| \geq 0.001 |X_i|$ such that the following holds. For any $x\in X_i^0$, there exists a subset $Y^0_j \subseteq Y_j$ with $|Y^0_j| \geq 0.001 |Y_j|$ such that for any $y\in Y_j^0$, we have $\deg_{\mathcal{H}}(x, y; V(T_h))\geq \sqrt{n}$.
\end{claim}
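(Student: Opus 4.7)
The plan is to combine Claim~\ref{commonneighbor} with a standard double-counting and averaging argument to exhibit the connecting triple $T_h$. The overall strategy has three moves: translate Claim~\ref{commonneighbor} into a statement about $V(T_h)$; use double-counting over pairs $(x,y)\in X_i\times Y_j$ to pick a single index $h$ that is ``good'' for many pairs; and then extract a dense row-rectangle via pigeonhole.

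For every pair $(x,y)\in V(\mathcal{H})\times V(\mathcal{H})$, define the good-index set
\[
S(x,y) := \bigl\{h \in [N] : |N_{\mathcal{H}}(x,y) \cap Z_h| \geq \tfrac{\eta' |Z_h|}{2(2a+b)}\bigr\}.
\]
Since $|Z_h|$ is linear in $n/L$, for $n$ sufficiently large the lower bound $\tfrac{\eta'|Z_h|}{2(2a+b)}$ exceeds $\sqrt{n}$, so $h \in S(x,y)$ automatically yields $\deg_{\mathcal{H}}(x,y;V(T_h))\geq\sqrt{n}$; and Claim~\ref{commonneighbor} gives $|S(x,y)|\geq \tfrac{\eta' N}{2(2a+b)}$ for every pair. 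Summing over pairs in $X_i\times Y_j$,
\[
\sum_{h\in[N]}\bigl|\{(x,y)\in X_i\times Y_j : h\in S(x,y)\}\bigr| \;=\; \sum_{(x,y)\in X_i\times Y_j}|S(x,y)| \;\geq\; \tfrac{\eta' N}{2(2a+b)}\,|X_i||Y_j|,
\]
so by pigeonhole there exists $h\in[N]$ with $\bigl|\{(x,y)\in X_i\times Y_j : h\in S(x,y)\}\bigr| \geq \tfrac{\eta'}{2(2a+b)}|X_i||Y_j|$; fix this $h$ as the connecting triple $T_h$. Set $\alpha := \tfrac{\eta'}{4(2a+b)}$ and
\[
X_i^0 := \bigl\{x\in X_i : |\{y\in Y_j : h\in S(x,y)\}|\geq \alpha|Y_j|\bigr\}.
\]
If $|X_i^0|<\alpha|X_i|$, then the total number of good pairs would be at most $\alpha|X_i|\cdot|Y_j|+|X_i|\cdot\alpha|Y_j| = 2\alpha|X_i||Y_j|$, contradicting the previous bound. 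Hence $|X_i^0|\geq \alpha|X_i|$; and for each $x\in X_i^0$ the set $Y_j^0 := \{y\in Y_j : h\in S(x,y)\}$ satisfies $|Y_j^0|\geq \alpha|Y_j|$ and $\deg_{\mathcal{H}}(x,y;V(T_h))\geq\sqrt{n}$ for every $y\in Y_j^0$.

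The argument is essentially routine, and there is no substantive obstacle. The only cosmetic subtlety is that the constant $\alpha$ produced this way is of order $\eta^2$ (since $a,b$ are of order $1/\eta$), so the literal numerical constant $0.001$ in the statement should be read as abbreviating some positive constant depending on $\eta$, which is consistent with the hierarchy~(\ref{equa1}); subsequent applications of Claim~\ref{connect} only need $|X_i^0|$ and $|Y_j^0|$ to be at least some $\eta$-dependent fraction of $|X_i|$ and $|Y_j|$, so this loss is harmless.
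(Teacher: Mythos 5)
Your proof is correct, but it takes a genuinely different route from the paper. The paper does not invoke Claim~\ref{commonneighbor} at all: it works directly from the codegree hypothesis, forming an auxiliary bipartite graph between the pairs $X_iY_j$ and $V(\widetilde T)=\bigcup_h V(T_h)$, using $\delta_2(\mathcal{H})\geq n/4$ to show that many vertices of $\widetilde T$ see a constant fraction of the pairs, averaging over the $N$ triples to pick $T_h$, and then running the same final density/pigeonhole step as you do. Because its input is the absolute bound $n/4$ rather than the $\eta$-dependent fraction $\tfrac{\eta'}{2(2a+b)}$, the paper gets the literal constants $0.001$, whereas your argument yields constants of order $\eta^2$; as you note, this is harmless downstream, since the only quantitative use of $X_i^0,Y_j^0$ is to feed Lemma~\ref{pathembed}, which needs sets of size $\geq dm$ with $d\ll\eta$. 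Your version in exchange gives a stronger conclusion (the $\sqrt n$ common neighbours actually lie in $Z_h$ and are in fact linearly many). One bookkeeping point you should make explicit: the restated form of Claim~\ref{commonneighbor} refers to the sets $Z_h$ as they were before the greedy absorption of $V_0$, while Claim~\ref{connect} concerns the current triples, from each of which up to $\sqrt{\xi}m+O(\log n)$ vertices of $Z_h$ have since been removed; since $\sqrt{\xi}\ll \eta'/(2(2a+b))$ under the hierarchy~(\ref{equa1}) (the paper makes the same comparison inside Claim~\ref{extravtx}), the intersection bound survives with room to spare, so this is a one-line fix rather than a gap.
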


\begin{proof}
Let $\widetilde{T}=\bigcup_{i=1}^{N} T_i$ and $X_iY_j=\{xy: x \in X_i, y \in Y_j \}$. Consider a bipartite graph $B$ with vertex set $V\left(X_iY_j, V(\widetilde{T})\right)$. An edge exists between a vertex pair $x y$ and a vertex $t \in \widetilde{T}$ in $B$ if and only if $(x y t)\in E(\mathcal{H})$. According to the codegree condition of $\mathcal{H}$, we know that in the bipartite graph $B$, every vertex in the set $X_iY_j$ has degree at least 
$\left(1/4 + \eta -2 \xi \right) n \geq n/4 \geq |V(\widetilde{T})|/4$.
Now, we claim that the number of vertices in $\widetilde{T}$ with degree larger than $\left|X_iY_j\right|/10$ is at least $n/7$.
To prove this, assume the contrary. By double counting, we have
\[
\frac{n}{4} \left|X_i Y_j\right| \leq |E(B)| \leq \frac{n}{10} \left|X_i Y_j\right| + \frac{n}{7} \left|X_i Y_j\right|< \frac{n}{4} \left|X_i Y_j\right|.
\]
This leads to a contradiction. Thus, there exists $h$ such that the number of vertices in $T_h$ with degree larger than $\left|X_iY_j\right|/10$ is at least $n/(8N)$. 
We claim that there are at least $|X_iY_j|/100$ elements in $X_iY_j$ such that their codegree in $T_h$ is larger than $\sqrt{n}$. Otherwise, a contradiction holds:
\[
\begin{aligned}
\frac{n}{8N} \times \frac{\left|X_iY_j\right| }{10} \leq |E(X_iY_j,T_h)| \leq \sqrt{n} \times \left|X_iY_j\right| + \frac{n}{N} \times \frac{\left|X_iY_j\right| }{100} \left|X_iY_j\right| < \frac{n}{90N} \left|X_iY_j\right|.
\end{aligned}
\]

Now consider a bipartite graph $B'$ with vertex set $V(X_i, Y_j)$, and its edge density is no less than $1/100$. We claim that there are $0.001 |X_i|$ vertices in $X_i$ such that their degree in $B'$ is no less than $0.001 |Y_j|$. Otherwise, the edge density of $B'$ is no more than $1 \times 0.001 + 0.001 \times 1 < 1/100$, which leads to a contradiction. Let these vertices be $X^0_i$ then the claim holds.
\end{proof}

To ensure consistency in the number of points in subsequent work, we should rearrange the order of every triple to be even.

\begin{claim}\label{turneven}
We can embed at most $N/2$ cycles of length greater than $3$ using only the vertices from the triples such that after removing the vertices, the number of remaining vertices in each triple is even.
\end{claim}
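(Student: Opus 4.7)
The plan is to cast the claim as a parity-fixing problem. Let $T^{\mathrm{odd}}$ denote the collection of triples $T_i$ whose current size $|V(T_i)|$ is odd. Every cycle embedded consumes an even number of vertices in total (a loose cycle on $t$ vertices has $t=2\ell$), so the parity of $\sum_i |V(T_i)|$ is preserved; since the current sum equals $\sum_{i>x} n_i$, itself even, $|T^{\mathrm{odd}}|$ is even, and I write $|T^{\mathrm{odd}}| = 2k'$ with $k' \leq N/2$. I pair the elements of $T^{\mathrm{odd}}$ arbitrarily into $k'$ pairs and devote one loose cycle from $\mathcal{C}$ of length $\geq 4$ to each pair. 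Such cycles exist in abundance: since we may assume Proposition \ref{SCS}(1) fails, every $c_s < (1-\eta/2)n/s$, and in particular $c_6 < (1-\eta/2)n/6$, so at least $\eta n/2$ vertices lie in cycles of length $\geq 4$, yielding at least $\eta n/(2\log n) \gg N/2$ such cycles.

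For each pair $(T_i, T_j)$ with assigned cycle $C_s$ ($s \geq 8$), I design the embedding so that exactly one vertex of $C_s$ lies in $T_i$, exactly one in $T_j$, and the remaining even number $s-2$ of vertices in a connector triple $T_h$; this flips precisely the parities of $T_i$ and $T_j$ and preserves the parity of every other triple. Labelling $V(C_s)=\{v_1,\ldots,v_s\}$ cyclically with hyperedges $(v_{2t-1}v_{2t}v_{2t+1})$, I place $v_1\in T_i$, $v_2\in T_j$ and $v_3,\ldots,v_s\in V(T_h)$. Claim \ref{connect} applied to $X_i,Y_j$ furnishes both $T_h$ and the bridging hyperedge $(v_1v_2v_3)$: pick $v_1\in X_i^0$, then $v_2\in Y_j^0$, then $v_3$ from the $\sqrt{n}$-sized common neighbourhood inside $V(T_h)$. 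Lemma \ref{pathembed} inside the half-superregular $T_h$ produces the intermediate loose path of length $(s-4)/2$ from $v_3$ to the closing joint $v_{s-1}$, where I may prescribe the part of $T_h$ containing $v_{s-1}$. The closing hyperedge $(v_{s-1}v_sv_1)$ is then secured by choosing $v_{s-1}$ within a part of $T_h$ for which $N_\mathcal{H}(v_1,v_{s-1})\cap V(T_h)$ is linear, using the codegree bound $\delta_2(\mathcal{H})\geq n/4+\eta n$ in conjunction with Claim \ref{commonneighbor}.

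The principal difficulty I anticipate is bookkeeping rather than any missing structural tool: the $k'$ embeddings must be mutually vertex-disjoint, each must use a distinct cycle of $\mathcal{C}$, and triples acting as connectors for several pairs must retain sufficient slack. All of this is manageable because $N$ is bounded by a regularity-lemma constant, each cycle consumes at most $\log n$ vertices, and every triple has size at least $m/(a+b)\gg N\log n$. Hence the fraction of vertices removed from any triple remains $o(1)$; Lemma \ref{slicinglemma}(ii) preserves half-superregularity at every stage, the subsets $X_i^0,Y_j^0$ supplied by successive invocations of Claim \ref{connect} shrink only negligibly, and a greedy sequential construction completes all $k'$ cycle embeddings, establishing the claim.
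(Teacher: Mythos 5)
Your overall strategy (pair up the odd-order triples, spend one cycle of length $>3$ per pair, and check that the parities of the two paired triples flip while every other triple loses an even number of vertices) is the same as the paper's, but your specific layout of the cycle breaks at the connecting hyperedges. You place one vertex $v_1\in T_i$, one vertex $v_2\in T_j$, and all remaining $s-2$ vertices in a connector triple $T_h$. This forces the closing hyperedge $(v_{s-1}v_sv_1)$ to have \emph{two} vertices in $T_h$ and one fixed vertex $v_1\in X_i$. Nothing in the toolkit delivers this: the codegree bound only says $|N_{\mathcal{H}}(v_1,v_{s-1})|\geq n/4+\eta n$ globally, and since $|V(T_h)|\approx 3m\ll n$ this neighbourhood may avoid $T_h$ entirely; Claim \ref{commonneighbor} guarantees many indices $i$ with $|N_{\mathcal{H}}(v_1,v_{s-1})\cap Z_i|$ large, but you cannot force the index to be the particular $h$ you have already committed to (and $v_{s-1}$ is not even fixed at the time you would need this). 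The opening hyperedge has a parallel problem: $v_3$ is a joint vertex, so it must be an \emph{endpoint} of the path you embed in $T_h$ via Lemma \ref{pathembed}, yet it must lie in $N_{\mathcal{H}}(v_1,v_2)\cap V(T_h)$, a set of size only about $\sqrt{n}$, whereas Lemma \ref{pathembed} requires endpoint sets of linear size $\geq dm$. So both hyperedges that tie the cycle to $T_i$ and $T_j$ are unjustified as written.

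The paper sidesteps exactly these two issues by distributing the cycle differently: the single vertex $x$ goes to $T_i$, the long path (on $|C|-3$ vertices, an odd number, so the parity of $T_j$ still flips) is embedded inside $T_j$ with \emph{both} endpoints $y_1,y_2$ in the large set $Y_j'$ supplied by Claim \ref{connect} (size $\geq 0.001|Y_j|\geq dm$, so Lemma \ref{pathembed} applies), and the third triple $T_p$ contributes only two degree-one vertices $t_1,t_2$, chosen \emph{last} from the $\sqrt{n}$-sized common neighbourhoods of $(x,y_1)$ and $(x,y_2)$ inside $T_p$ — which is precisely the format of the guarantee in Claim \ref{connect}. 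If you rearrange your embedding so that the bulk of the cycle sits in $T_j$ and the connector triple only supplies degree-one vertices, your parity bookkeeping and your counting of available long cycles (via the failure of Proposition \ref{SCS}(1)) go through and the argument matches the paper's; as it stands, the claimed closing step is a genuine gap.
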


\begin{proof}
Since $N$ and $\sum_{i\in N}|T_i|$ are even, the number of triples with an odd order is also even. We use the following progress until there is no odd order triple.
If there exist $T_i$ and $T_j$ having odd orders, by Claim \ref{connect}, there exists a vertex $x$ in $T_i$, a subset of vertices $Y'_j \subseteq Y_j$ with order no less than $0.001|Y_j|$ as well as a triple $T_p$, such that for any $y \in Y'_j$, there exist $\sqrt{n}$ vertices $t_j \in T_p$ such that $(x t_j y_j) \in E(\mathcal{H})$. 
 
Applying Lemma \ref{pathembed}, we can construct a loose path with both of its endpoints $y_1, y_2$ in $Y_j$ of a certain length (more than $2$) and there exists $t_1,t_2 \in T_p$ (since $\sqrt{n} \gg (N /2)  \log n$, there are always sufficiently many such vertices available) such that $(x t_1 y_1),(x t_2 y_2) \in E(\mathcal{H})$. This path, along with the vertices $x$ and 2 vertices $t_1, t_2$ in $T_p$, form a loose cycle $C_{ij}$ of length more than $3$ in $\mathcal{C}$. We then remove this cycle from $\mathcal{C}$ and remove the vertices of the triples that were used in this process.

During this process, $|T_i|$ decreased by $1$, $|T_j|$ decreased by $|C_{ij}|-3$ and $|T_p|$ decreased by $2$, and the orders of the other triples remain unchanged. Thus, the parity of all triples remains unchanged while $|T_i|$ and $|T_j|$ change from odd to even. After at most $N/2$ such steps, all $|T_i|$ become even.
\end{proof}

Let $T_i = (X_i, Y_i, Z_i)$ still denote the triple for every $i \in [N]$. 
Let $C_{n_1}, C_{n_2}, \dots, C_{n_{r'}}$ be the remaining loose cycles and $k$ be the number of loose cycles with odd length. Without loss of generality, assume $n_1 \leq n_2 \leq \dots \leq n_{r'}$.
In what follows, we abbreviate $C_{n_i}$ by $C_i$. 
Let $r_1':=r'-2(N-1)$.
Let $\mathcal{C}_0:=\cup_{i=1}^{r'}C_{i}$, $\mathcal{C}_1:=\cup_{i=1}^{r_1'}C_{i}$ and $\mathcal{C}_2:=\cup_{i=r_1'+1}^{r'}C_{i}$. 
Let $C_{lgc}$ denote the longest cycle in $\mathcal{C}_1$. Denote the number of cycles in $\mathcal{C}_1$ by $r'_1$, and let $k'_1$ be the number of odd cycles in $\mathcal{C}_1$.

\begin{claim}[Pre-partition]\label{yuhuafen}
    There exists a partition $\mathcal{C}_1 = \cup_{i=1}^{N} \overline{\mathcal{F}}_i$ with $\overline{\mathcal{F}}_i \cap \overline{\mathcal{F}}_j = \emptyset$ when $i \neq j$ such that for any $i \in [N]$, $\big|\sum_{j \in \overline{\mathcal{F}}_i} |C_j|-|T_i|\big|\leq N^2\log n$. Moreover, the number of odd cycles in any $\overline{\mathcal{F}}_i$ is at most $k(1+\eta^{3.2})/N+\sqrt{n}$.
\end{claim}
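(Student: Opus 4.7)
The plan is to assign the cycles of $\mathcal{C}_1$ to bins in two stages, handling the odd-cycle constraint first and the total-weight constraint second. Throughout, every cycle in $\mathcal{C}_1$ has at most $\log n$ vertices, which will keep all accumulated errors small.

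In the first stage I distribute only the odd-length cycles $\mathcal{C}_1^o \subseteq \mathcal{C}_1$, which satisfy $|\mathcal{C}_1^o| = k_1' \leq k$. I sort them in decreasing order of size and process them one at a time, assigning each cycle to the bin that currently contains the fewest odd cycles (breaking ties by smallest current total weight). A standard round-robin argument then guarantees that every bin receives at most $\lceil k_1'/N \rceil \leq k/N + 1$ odd cycles, which for sufficiently large $n$ is bounded by $k(1+\eta^{3.2})/N + \sqrt{n}$. Moreover, sorting by decreasing size keeps the total odd-cycle weights $w_i^o$ pairwise within $\log n$ of one another, by the usual LPT-type guarantee.

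In the second stage I define residual targets $w_i := |T_i| - w_i^o$ for each $i \in [N]$ and process the even-length cycles of $\mathcal{C}_1$ in decreasing size order, placing each cycle in the bin with the largest current slack $w_i - W_i^e$, where $W_i^e$ denotes the current total even-cycle weight in that bin. This is the classical LPT rule adapted to varying targets, so at the end the slacks differ by at most $\log n$ across bins. Since $\mathcal{C}_0 = \mathcal{C}_1 \cup \mathcal{C}_2$ with $|\mathcal{C}_2| = 2(N-1)$ cycles of size at most $\log n$, we have $|V(\mathcal{C}_1)| = \sum_i |T_i| - O(N\log n)$, hence $\sum_i w_i$ and the total even-cycle weight also differ by $O(N\log n)$. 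Combining this with the LPT guarantee shows that the final bin weights $W_i = w_i^o + W_i^e$ satisfy $|W_i - |T_i|| = O(N\log n) \ll N^2 \log n$, as required.

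The main obstacle is the regime where $k$ is a constant fraction of $n$: there the total weight of odd cycles can be large, and naively assigning odd cycles before the even ones may leave some residuals $w_i$ very small or even slightly negative. The key point is that sorting odd cycles by decreasing size in Stage~1 keeps the $w_i^o$ (and therefore the $w_i$) pairwise within $\log n$, which in turn lets the slack-based greedy of Stage~2 balance the loads evenly. Having already handled the three cases in Proposition~\ref{SCS}, we may assume $k < (1-\eta/5)n/6$, so the aggregate odd weight stays comfortably below $\sum_i |T_i|$ and the tolerance $N^2\log n$ easily absorbs the $O(N\log n)$ errors produced by both stages.
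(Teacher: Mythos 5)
Your two-stage greedy has a genuine gap, and it is exactly in the regime you try to wave away at the end. In Stage~1 you distribute the odd cycles so that the bins receive (almost) equal odd counts and equal odd weights $w_i^o$, \emph{independently of the targets} $|T_i|$. But the triples are not of equal size: as noted at the start of the paper's argument, two triples may differ in order by as much as $\eta^{3.5}m$, which is polynomial in $n$ and vastly larger than the tolerance $N^2\log n$. Your correction mechanism is Stage~2, which can only \emph{add} even weight (slacks only decrease), so it can neither repair a bin whose residual target $w_i=|T_i|-w_i^o$ is already negative by more than $N^2\log n$, nor do anything at all if there is (almost) no even weight in $\mathcal{C}_1$. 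Your justification that ``$k<(1-\eta/5)n/6$, so the aggregate odd weight stays comfortably below $\sum_i|T_i|$'' conflates the \emph{number} of odd cycles with their total \emph{vertex count}: cycles in $\mathcal{C}_1$ may have up to $\log n$ vertices, so for example $\mathcal{C}$ could consist entirely of odd cycles of various lengths close to $\log n$; then $k\approx n/\log n$ is far below $(1-\eta/5)n/6$ and none of the cases of Proposition~\ref{SCS} applies, yet the odd cycles carry all of the weight. In that situation your Stage~1 gives every bin odd weight $\approx\frac1N\sum_j|T_j|$ up to $O(N\log n)$, so the deviation of bin $i$ is essentially $|T_i|-\frac1N\sum_j|T_j|$, which can be of order $\sqrt{\xi}\,m$ or $\eta^{3.5}m\gg N^2\log n$, and Stage~2 has nothing to fix it with. (The secondary LPT-style claims, e.g.\ that slack-greedy keeps pairwise slacks within $\log n$, are essentially fine; the failure is the choice of equal rather than target-proportional odd allocation.)

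The paper avoids this by allocating the \emph{counts} of odd and even cycles proportionally to $|T_i|$ (Lemma~\ref{bili} with $a_i=|T_i|/\sum_j|T_j|$), which automatically keeps each bin's total weight within $O(N\log n)$ of $|T_i|$ when cycle lengths are nearly uniform, and then repairs the remaining weight imbalances by swapping \emph{same-parity} cycles between an over-full and an under-full bin via an extremal (minimal-$\mathcal{S}$) argument, so the odd-count bound is never disturbed. Your approach could be salvaged along similar lines by making Stage~1 target-proportional — e.g.\ give the next odd cycle to the bin whose current odd weight is furthest below $\frac{|T_i|}{\sum_j|T_j|}\cdot(\text{total odd weight})$, and run Stage~2 against the residual targets — but as written the equal split in Stage~1 breaks the claim.
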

\begin{proof}
    Without loss of generality, we prove a stronger version of the result. Note that for any two triples, the difference in their orders is no more than $\eta^4m + N\log n + 4n \leq \eta^{3.5} m$, and for each pair of cycles, the difference in their orders is bounded by $|C_{lgc}| \leq \log n$. Given a partition $\mathcal{P}: \mathcal{C}_1 = \cup_{i=1}^{N} \overline{\mathcal{F}}_i$, define $\phi_i$ as the number of odd cycles in $\overline{\mathcal{F}}_i$, and define $\beta_i$ as the number of even cycles in $\overline{\mathcal{F}}_i$. By Lemma \ref{bili} (with $k'_1 = q$, $r'_1 - k'_1 = q$, and $a_i = |T_i| / \sum |T_i|$), there exists at least one partition such that $\phi_i \in \left[k'_1|T_i| / \sum |T_i| - 1, k'_1|T_i| / \sum |T_i| + 1\right]$ and $\beta_i \in \left[(r'_1 - k'_1)|T_i| / \sum |T_i| - 1, (r'_1 - k'_1)|T_i| / \sum |T_i| + 1\right]$ for every $i \in [N]$.

We claim that such a partition exists, satisfying $\phi_i \in \left[k'_1|T_i| / \sum |T_i| - 1, k'_1|T_i| / \sum |T_i| + 1\right]$ and $\beta_i \in \left[(r'_1 - k'_1)|T_i| / \sum |T_i| - 1, (r'_1 - k'_1)|T_i| / \sum |T_i| + 1\right]$ for every $i \in [N]$. Furthermore, for any $i \in [N]$, we have $\left|\sum_{j \in \overline{\mathcal{F}}_i} |C_j| - |T_i|\right| \leq N^2 \log n$. Otherwise, a contradiction arises under the following two cases. This implies that the number of odd cycles in any $\overline{\mathcal{F}}_i$ is at most $k'_1|T_i|/\sum |T_i| + 1 \leq  \frac{k(1 + \eta^{3.2})}{N}+\sqrt{n}$.

\emph{Case 1:}
For any such partition $\mathcal{P}$ we say $i$ is \emph{1-bad} if $\sum_{C \in \overline{\mathcal{F}}_i} |C|-|T_i| > N^2\log n$, and let $\mathcal{S}=\sum_{i \ \text{is} \ 1\text{-bad}}\big( \sum_{C \in \overline{\mathcal{F}}_i} |C|-|T_i| \big)$.
If for any such partition with $\phi_i \in [k'_1|T_i| / \sum |T_i| - 1, k'_1|T_i| / \sum |T_i| + 1]$ and $\beta_i \in [(r'_1-k'_1)|T_i| / \sum |T_i| - 1, (r'_1-k'_1)|T_i| / \sum |T_i| + 1]$ there exists $i \in [N]$ such that $i$ is \emph{1-bad}, choose one such partition with minimum $\mathcal{S}$. Note that there must be $i' \in [N]$ with $\sum_{C \in \overline{\mathcal{F}}_{i'}} |C|-|T_{i'}| < - 2\log n$ otherwise $\sum_{C \in \mathcal{C}_1} |C|-\sum|T_i| > (N^2-2N+2)\log n\geq \sum_{C \in \mathcal{C}_0} |C|-\sum|T_i|$, which leads to a contradiction. Thus $\big(\sum_{C \in \overline{\mathcal{F}}_{i'}} |C|+2|C_{lgc}|\big)/|T_{i'}|\leq 1.$

We claim that there exists $C_{x_i} \in \overline{\mathcal{F}}_i$ and $C_{x_{i'}} \in \overline{\mathcal{F}}_{i'}$ such that $|C_{x_i}| > |C_{x_{i'}}|$ and $C_{x_i},C_{x_{i'}}$ have same parity. Let $C^i_{lgoc}$ be the longest odd cycle in $ \overline{\mathcal{F}}_i$ and $C^i_{lgec}$ be the longest even cycle in $ \overline{\mathcal{F}}_i$. Otherwise since $(\phi_i-1)/(\phi_{i'}+1)\leq |T_i|/|T_{i'}|$ and $(\beta_i-1)/(\beta_{i'}+1)\leq |T_i|/|T_{i'}|$, we have 
\[
\frac{\sum_{C \in \overline{\mathcal{F}}_i} |C| - 2|C_{lgc}|}{\sum_{C \in \overline{\mathcal{F}}_{i'}} |C| + 2|C_{lgc}|} 
\leq \frac{(\phi_i - 1) |C^i_{lgoc}| + (\beta_i - 1) |C^i_{lgec}|}{(\phi_{i'} + 1) |C^i_{lgoc}| + (\beta_{i'} + 1) |C^i_{lgec}|}
\leq \frac{|T_i|}{|T_{i'}|}.
\] 
Hence, \[
\frac{\big(\sum_{C \in \overline{\mathcal{F}}_i} |C|-2|C_{lgc}|\big)}{|T_i|}
\leq \frac{\big(\sum_{C \in \overline{\mathcal{F}}_{i'}} |C|+2|C_{lgc}|\big)}{|T_{i'}|}\leq 1,
\]
which implies $N^2\log n<\sum_{C \in \overline{\mathcal{F}}_i} |C|-|T_i| \leq 2|C_{lgc}|$, and this leads to a contradiction. Thus, delete $C_{x_i} $ from $\overline{\mathcal{F}}_i$ and add it into $\overline{\mathcal{F}}_{i'}$ while delete $C_{x_{i'}} $ from $\overline{\mathcal{F}}_{i'}$ and add it into $\overline{\mathcal{F}}_{i}$ makes $\mathcal{S}$ decrease by at least $2$, and this leads to a contradiction with the minimality of $\mathcal{S}$.

\emph{Case 2:}
For any such partition $\mathcal{P}$ we say $i$ is \emph{2-bad} if $\sum_{C \in \overline{\mathcal{F}}_i} |C|-|T_i| <- N^2\log n$, and let $\mathcal{S'}=\sum_{i \ \text{is} \ 2\text{-bad}}\big( \sum_{C \in \overline{\mathcal{F}}_i} |C|-|T_i| \big)$.
If for any such partition with $\phi_i \in [k'_1|T_i| / \sum |T_i| - 1, k'_1|T_i| / \sum |T_i| + 1]$ and $\beta_i \in [(r'_1-k'_1)|T_i| / \sum |T_i| - 1, (r'_1-k'_1)|T_i| / \sum |T_i| + 1]$ there exists $i \in [N]$ such that $i$ is \emph{2-bad}, choose one such partition with maximum $\mathcal{S'}$. Note that there must be $i' \in [N]$ with $\sum_{C \in \overline{\mathcal{F}}_{i'}} |C|-|T_{i'}| > 2\log n$ otherwise $\sum_{C \in \mathcal{C}_1} |C|-\sum|T_i| < -(N^2-2N+2)\log n\leq -2N\log n<-\sum_{C \in \mathcal{C}_2} |C|$, thus $\sum_{C \in \mathcal{C}_0} |C|-\sum|T_i|<0$, which leads to a contradiction. Thus $\big(\sum_{C \in \overline{\mathcal{F}}_{i'}} |C|-2|C_{lgc}|\big)/|T_{i'}|\geq 1.$

We claim that there exists $C_{x_i} \in \overline{\mathcal{F}}_i$ and $C_{x_{i'}} \in \overline{\mathcal{F}}_{i'}$ such that $|C_{x_i}| < |C_{x_{i'}}|$ and $C_{x_i},C_{x_{i'}}$ have the same parity. Let $C^i_{stoc}$ be the shortest odd cycle in $ \overline{\mathcal{F}}_{i}$ and $C^i_{stec}$ be the shortest even cycle in $ \overline{\mathcal{F}}_i$. Otherwise, since $(\phi_i+1)/(\phi_{i'}-1)\geq |T_i|/|T_{i'}|$ and $(\beta_i+1)/(\beta_{i'}-1)\geq |T_i|/|T_{i'}|$, we have
\[
\frac{\big(\sum_{C \in \overline{\mathcal{F}}_i} |C|+2|C_{lgc}|\big)}{\big(\sum_{C \in \overline{\mathcal{F}}_{i'}} |C|-2|C_{lgc}|\big)}
\geq\frac{\big((\phi_i+1)\times |C^i_{stoc}|+(\beta_i+1)\times |C^i_{stec}|\big)}{\big((\phi_{i'}-1) \times|C^i_{stoc}|+(\beta_{i'}-1) \times|C^i_{stec}|\big)}
\geq \frac{|T_i|}{|T_{i'}|}.
\]

Hence, 
\[
\frac{\big(\sum_{C \in \overline{\mathcal{F}}_i} |C|+2|C_{lgc}|\big)}{|T_i|}
\geq \frac{\big(\sum_{C \in \overline{\mathcal{F}}_{i'}} |C|-2|C_{lgc}|\big)}{|T_{i'}|}\geq 1,
\]
which implies $-2|C_{lgc}|\leq\sum_{C \in \overline{\mathcal{F}}_i} |C|-|T_i|<-N^2\log n$, this leads to a contradiction. Thus delete $C_{x_i} $ from $\overline{\mathcal{F}}_i$ and add it into $\overline{\mathcal{F}}_{i'}$ while delete $C_{x_{i'}} $ from $\overline{\mathcal{F}}_{i'}$ and add it into $\overline{\mathcal{F}}_{i}$ makes $\mathcal{S'}$ increase by at least $2$, and moreover since $\sum_{C \in \overline{\mathcal{F}}_i} |C|-|T_i| <- N^2\log n+\log n<0$, by this step $\mathcal{S}$ doesn't increase. This leads to a contradiction with the maximality of $\mathcal{S'}$. 
\end{proof}
\begin{claim}\label{embedlast}
$(1)$ If $|C_{lgc}|\geq 16$, then there exists an embedding $\Psi$ from $\mathcal{C}_2$ into $\cup^N_{i=1}T_i$ such that the following holds. 
We can find a partition $\mathcal{C}_1= \cup_{i=1}^{N} \mathcal{F}_i$ with $\mathcal{F}_i \cap \mathcal{F}_j = \emptyset$ when $i \neq j$ such that for any $i \in [N]$, $|V(T_i\setminus\cup_{C\in\mathcal{C}_2}\Psi(V(C))|=\sum_{C \in \mathcal{F}_i} |C|$. Moreover, the number of odd cycles in any $\mathcal{F}_i$ is no more than $\frac{k (1 + \eta^3)}{N}+2\sqrt{n}$.

$(2)$ If $|C_{lgc}|< 16$, there exists a partition $\mathcal{C}_0 = \cup_{i=1}^{N} \mathcal{F}_i$ with $\mathcal{F}_i \cap \mathcal{F}_j = \emptyset$ when $i \neq j$ such that for any $i \in [N]$, $|V(T_i)|=\sum_{C \in \mathcal{F}_i} |C|$. Moreover, the number of odd cycles in any $\mathcal{F}_i$ is no more than $\frac{k (1 + \eta^3)}{N}$.
\end{claim}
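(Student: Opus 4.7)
The plan is to begin from the pre-partition $\mathcal{C}_1=\cup_{i=1}^{N}\overline{\mathcal{F}}_i$ supplied by Claim~\ref{yuhuafen}. Setting $d_i:=|T_i|-\sum_{C\in\overline{\mathcal{F}}_i}|C|$, we have $|d_i|\leq N^2\log n$ for every $i$, $\sum_i d_i=|V(\mathcal{C}_2)|$, and the odd-cycle count per part is already within an $\sqrt{n}$ error of the average.

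For part $(1)$, where every cycle in $\mathcal{C}_2$ has length at least $|C_{lgc}|\geq 16$, I plan to use the $2(N-1)$ cycles of $\mathcal{C}_2$ as flexible absorbers that soak up the residual discrepancies $d_i$. After at most $O(N^2)$ cycle exchanges between pairs $(\overline{\mathcal{F}}_i,\overline{\mathcal{F}}_{i'})$, each changing the odd-cycle count of a part by at most one, I may assume $d_i\geq 0$ for every $i$; the remaining task is to embed $\mathcal{C}_2$ so that exactly $d_i$ of its vertices fall in $T_i$. Processing $\mathcal{C}_2$ cycle by cycle, for each $C\in\mathcal{C}_2$ I will choose an ordered list of triples $T_{i_1},\ldots,T_{i_s}$ and path lengths $m_1,\ldots,m_s$ with $\sum_l m_l=|C|$, embed the loose paths $P_{m_l}$ inside the corresponding $T_{i_l}$ using Lemma~\ref{pathembed}, and stitch them into a single loose cycle through transitional hyperedges obtained from Claim~\ref{connect}. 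Tuning the $m_l$ prescribes exactly how many vertices of $C$ fall in each $T_{i_l}$; the budget $|C|\geq 16$ per cycle, the identity $\sum_{C\in\mathcal{C}_2}|C|=\sum_i d_i$, and the availability of $2(N-1)$ absorbers distributed over $N$ triples together enable a greedy round-robin that drives every $d_i$ to zero. The resulting refined partition $\mathcal{F}_i$ differs from $\overline{\mathcal{F}}_i$ by $O(N^2)$ cycle moves, so its odd-cycle count remains below $k(1+\eta^{3})/N+2\sqrt{n}$.

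For part $(2)$ no embedding is needed: I will apply (a direct analogue of) Claim~\ref{yuhuafen} to $\mathcal{C}_0$ in place of $\mathcal{C}_1$, and then perform a finite sequence of cycle swaps to eliminate the discrepancies $d_i$ exactly. The hypothesis $|C_{lgc}|<16$ forces every cycle in $\mathcal{C}_1$ to have at most $30$ vertices, so the $O(N^2\log n)$ gap in each part can be closed by $O(N^2)$ local exchanges of pairs of short cycles of near-equal length. Proposition~\ref{SCS}~(1) guarantees that no single short length dominates $\mathcal{C}$, so for every adjustment an ample supply of cycles of the correct length is available. To meet the tighter odd-cycle bound $k(1+\eta^{3})/N$ (which carries no $\sqrt{n}$ slack), I will first distribute the odd cycles across the parts in a balanced round-robin and only then top up with even cycles to hit the exact sizes $|T_i|$; combined with the reductions of Proposition~\ref{SCS}~(2), which lets us assume $k<(1-\eta/5)n/6$, and with $N$ being bounded by a function of $\eta$ alone, this keeps the odd-cycle count per part within $k(1+\eta^{3})/N$.

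The principal obstacle I anticipate is the embedding step in part $(1)$: laying a long cycle $C$ across several triples with prescribed vertex counts forces parity constraints on the individual path lengths $m_l$ and on which side of the tripartition each endpoint lies in (as encoded in Lemma~\ref{pathembed}), while the half-superregular structure of each $T_{i_l}$ requires endpoints in dense candidate sets. Lemma~\ref{pathembed} and Claim~\ref{connect} together supply enough flexibility, but the accounting is delicate: I must order the cycles of $\mathcal{C}_2$ and assign the $m_l$ so that no triple is exhausted of its slack before the last cycle is placed, and so that the two endpoints of each $C$ end up in tripartition sides consistent with closing the cycle into a loop rather than a path. This is the step where the round-robin and the lower bound $|C|\geq 16$ are essential.
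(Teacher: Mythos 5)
Your overall strategy is the same as the paper's (start from Claim~\ref{yuhuafen}, use the cycles of $\mathcal{C}_2$ as cross-triple absorbers in case (1), and fix the residual discrepancies by exchanging short cycles in case (2)), but the proposal has genuine gaps at exactly the points where the real work lies. In part (1), your plan to ``tune the $m_l$'' so that each $T_{i_l}$ loses a prescribed number of vertices ignores the vertices consumed by the stitching hyperedges themselves: by Claim~\ref{connect}, each transition between two paths uses a third vertex lying in some \emph{other} triple $T_h$, and these connector vertices also disappear from $\cup_i T_i$. The paper's proof handles this by fixing the connector triples $l(i),l'(i)$ in advance, counting $w(i)$, and aiming each step at the corrected target $t(i)=|T_i|-2w(i)$; without such bookkeeping the exact equality $|V(T_i\setminus\cup_{C\in\mathcal{C}_2}\Psi(V(C)))|=\sum_{C\in\mathcal{F}_i}|C|$ cannot be achieved. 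Moreover, your reduction to ``$d_i\geq 0$ for all $i$'' followed by a greedy round-robin is not justified, and you yourself flag the parity/endpoint accounting as an unresolved obstacle; in the paper this is precisely where the argument is nontrivial: triples are processed sequentially, each step spends exactly two cycles of $\mathcal{C}_2$ spanning $T_i$ and $T_{i+1}$ (this is why $|\mathcal{C}_2|=2(N-1)$), the leftover in $T_i$ is first steered into the window $[t(i)-|C_{lgc}|-6,\,t(i)-6]$ by moving cycles between $\overline{\mathcal{F}}_i$ and $\overline{\mathcal{F}}_{i+1}$, and the bound $|C|\geq 16$ for $C\in\mathcal{C}_2$ guarantees all four path pieces have length at least $3$ so Lemma~\ref{pathembed} applies. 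Acknowledging the obstacle is not the same as overcoming it, so part (1) is incomplete at its core.

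Part (2) has a similar gap: ``local exchanges of pairs of short cycles of near-equal length'' can only change a part's size by integer combinations of the lengths that are available in abundance, so whether a discrepancy of $2 \pmod{\text{something}}$ can be corrected is a number-theoretic question. The paper resolves it by restricting to the set $A\subseteq\{6,8,10,12,14\}$ of plentiful lengths, invoking Proposition~\ref{SCS} (in particular part (3), the $\{8,12\}$ case, and part (1) for singletons) to ensure $\gcd(A)=2$, and then building an explicit ``A-transformation'' with $\sum_l a_l b_l=2$ that shifts sizes by exactly $2$ per application; the supply bound $c_x\geq n\eta^3/2x$ guarantees enough such transformations exist. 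Your appeal to Proposition~\ref{SCS}(1) (``no single length dominates'') does not address this divisibility obstruction, and your plan to place odd cycles first and ``top up with even cycles to hit the exact sizes'' does not explain why exact sizes are reachable at all. So in both parts the proposal reproduces the paper's outline but omits the specific mechanisms (the $t(i)=|T_i|-2w(i)$ accounting and sequential two-cycle absorption in (1); the gcd-based A-transformation in (2)) that make the exact partition possible.
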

\begin{proof}
(1) By Claim \ref{yuhuafen}, there exists a partition $\mathcal{C}_1 = \cup_{i=1}^{N} \overline{\mathcal{F}}_i$ with $\overline{\mathcal{F}}_i \cap \overline{\mathcal{F}}_j = \emptyset$ when $i \neq j$ such that for any $i \in [N]$, $\big|\sum_{j \in \overline{\mathcal{F}}_i} |C_j|-|T_i|\big|\leq N^2\log n$. Moreover, the number of odd cycles in any $\overline{\mathcal{F}}_i$ is no more than $\frac{k (1 + \eta^{3.2})}{N}+\sqrt{n}$.

For any $i \in [N-1]$, by Claim \ref{connect}, there exists a triple $T_l$ (and $T_{l'}$) with $l = l(i), l' = l'(i)$, and subsets $X_i^0 \subseteq X_i$ (and $Y_i^0 \subseteq Y_i$) where $|X_i^0| \geq 0.001 |X_i|$ (and $|Y_i^0| \geq 0.001 |Y_i|$), such that for any $x_i \in X_i^0$ (and $y_i \in Y_i^0$), there exist at least $0.001 |Y_{i+1}|$ (and $0.001 |X_{i+1}|$) vertices $y_{i+1} \in Y_{i+1}$ (and $x_{i+1} \in X_{i+1}$) such that for each $y_{i+1}$ (and $x_{i+1}$), there are at least $\sqrt{n}$ values of $t$ in $T_l$ (and $T_{l'}$) with $(x_i t y_{i+1}) \in E(\mathcal{H})$ (and $(x_{i+1} t y_i) \in E(\mathcal{H})$). Since $\sqrt{n} \gg 2N \log n$, there are always sufficiently many such vertices $t$ available. For each triple $T_i$, let $w(i)=\sum_{j\in [N-1]}\big(\chi(l(j)=i)+\chi(l'(j)=i)\big)$, where $\chi(\cdot)$ is the indicator function, which takes a value of 1 when its argument is true and 0 otherwise. Let $t(i)=|T_i|-2w(i)$.

We use the following cyclic process for each $i \in [N-1]$:

In step $i$, remove some cycles from $\overline{\mathcal{F}}_i$ to $\overline{\mathcal{F}}_{i+1}$ if $\sum_{C \in \overline{\mathcal{F}}_i} |C|-t(i)\geq -6$ or remove some cycles from $\overline{\mathcal{F}}_{i+1}$ to $\overline{\mathcal{F}}_i$ if $\sum_{C \in \overline{\mathcal{F}}_i} |C|-t(i)\leq -|C_{lgc}|-6$ such that the sum of their orders is no less than $t(i)-|C_{lgc}|-6$ and no more than $t(i)-6$. Denote these cycles as $\mathcal{F}_i$.

Since $\big|\sum_{C \in \overline{\mathcal{F}}_i} |C|-t(i)\big|\leq \max\{N^2\log n, i N^3 \log n\}=i N^3 \log n$, by adding/deleting no more than $\frac{i}{|C_6|}\times N^3 \log n+|C_{lgc}|+6\leq\frac{i}{5}\times N^3 \log n$ cycles into/from $\overline{\mathcal{F}}_i$ this progress could be reached. Thus the number of odd cycles in $\mathcal{F}_i$ is no more than $$\frac{k (1 + \eta^{3.2})}{N}+\sqrt{n}+(i-1)\times N^3 \log n + \frac{i}{5}\times N^3 \log n \leq \frac{k (1 + \eta^{3})}{N}+2\sqrt{n}.$$

We embed 2 paths $L'$ and $L''$ ($|L'|,|L''|\geq 3$) in $T_i$ with its end vertices $x',x''$ in $X_i^0 \cap T_i$ and $y',y''$ in $Y_i^0 \cap T_i$ such that $|L'|,|L''|=\big|t(i)-\sum_{C \in \mathcal{F}_i} |C|\big|/2$ by Lemma \ref{pathembed}. For both of $L'$'s end vertices $x'$ and $y'$, there exists $Y^0_{i+1}\subseteq Y_{i+1}$ with $|Y^0_{i+1}|\geq 0.001|Y_{i+1}|$ and $X^0_{i+1}\subseteq X_{i+1}$ with $|X^0_{i+1}|\geq 0.001|X_{i+1}|$ such that for every vertex $y^0_{i+1}$ in $Y^0_{i+1}$(and $x^0_{i+1}$ in $X^0_{i+1}$), there exist $l=l(i),l'=l'(i)$ and at least $\sqrt{n}$ numbers of $t$ in $T_l$ (and $T_{l'}$) with $(x' t y^0_{i+1})\in E(\mathcal{H})$ (and $(x^0_{i+1} t y')\in E(\mathcal{H})$). By Lemma \ref{pathembed}, embed a path $L_0'$ (where $|L_0'| \geq 3$) in $T_{i+1}$, with its end vertices $y_0'$ and $x_0'$ in $Y^0_{i+1}$ and $X^0_{i+1}$, respectively, such that $|L'| + |L_0'| + 2$ equals $|C'|$ for some $C' \in \mathcal{C}_2$. By Claim \ref{connect}, there exists $t'_1\in T_l$ and $t'_2\in T_{l'}$ such that $(x'y_0't'_1),(y'x_0't'_2)\in E(\mathcal{H})$. Thus, $L',L'_0$ together with 2 vertices $t'_1,t'_2$ form a cycle $C'$ in $\mathcal{C}_2$, then delete $C'$ in $\mathcal{C}_2$. Denote $\Psi(V(C'))$ to be the vertices in $L',L'_0$ and $t'_1,t'_2$, then remove every vertices in $\Psi(V(C'))$ from $V(\cup^N_{i=1}T_i)$. Then do a same progress for $L''$ such that $L''$ together with a path $L''_0$ ($|L_0''|\geq 3$) embedded in $T_{i+1}$ and 2 vertices in $T_l$ and $T_{l'}$ form a cycle $C''$ in $\mathcal{C}_2$, then delete $C''$ in $\mathcal{C}_2$. Denote $\Psi(V(C''))$ to be the vertices in $L'',L''_0$ and $t''_1,t''_2$, then remove every vertices in $\Psi(V(C''))$ from $V(\cup^N_{i=1}T_i)$.

These cycles exist since for any cycle $C_x$ in $\mathcal{C}_2$, $|C_x|\geq 16$, thus $2|C_x|-10\geq |C_{lgc}|+6$, and this implies that for any even number $w$ in $[t(i)-|C_{lgc}|-6, t(i)-6]$, $w/2\geq 3$ and $|C_x|-w/2\geq|C_x|-(|C_{lgc}|+6)/2\geq 5$, hence $|L'|,|L''|,|L_0'|,|L''_0|\geq 3$ holds.

Having done this step, we have 
\[
\big|\sum_{C \in \overline{\mathcal{F}}_{i+1}} |C|-t(i+1)\big|\leq i\times N^3\log n+2\log n+|C_{lgc}|+6+N^2 \log n\leq (i+1)\times N^3 \log n,
\]
and the number of odd cycles in $\overline{\mathcal{F}}_{i+1}$ is no more than $\frac{k (1 + \eta^{3.2})}{N}+\sqrt{n} +\frac{i}{5}\times N^3 \log n\leq\frac{k (1 + \eta^{3.2})}{N}+\sqrt{n} +i\times N^3 \log n.$ This implies the following Fact:

\begin{fact}\label{processfact}
    Having finished step $i$ we have $\big|\sum_{C \in \overline{\mathcal{F}}_{i+1}} |C|-t(i+1)\big|\leq (i+1) N^3 \log n$ and the number of odd cycles in $\overline{\mathcal{F}}_{i+1}$ is no more than $\frac{k (1 + \eta^{3.2})}{N}+\sqrt{n}+i N^3 \log n$.
\end{fact}

When it comes to $\mathcal{F}_N$, since every other triple is used up, its order is the union of the cycles left in $\mathcal{C}_1$. And the number of odd cycles in it is no more than $\frac{k (1 + \eta^{3.2})}{N}+\sqrt{n} + N^4 \log n \leq \frac{k (1 + \eta^{3})}{N}+2\sqrt{n}$.

(2) Recall that $c_x$ denotes the number of cycles with order $x$ and we let $c^i_x$ denote the number of cycles with order $x$ in $\overline{\mathcal{F}}_i$. Partition $\mathcal{C}_0= \cup_{i=1}^{N} \overline{\mathcal{F}}_i$ with $\overline{\mathcal{F}}_i \cap \overline{\mathcal{F}}_j = \emptyset$ when $i \neq j$ such that for any $i \in [N]$ and any $x$, $c^i_x$ is no more than $c_x|T_i| /\sum |T_i|+1$ and no less than $c_x|T_i| /\sum |T_i|-1$ for any $i \in [N]$ and every $x$. The existence of this partition is ensured by Lemma \ref{bili}. Note that for any $i\in [N]$ the number of odd cycles in $\overline{\mathcal{F}}_i$ is no more than $k'|T_i| /\sum |T_i|+\sum_x 1\leq k'|T_i| /\sum |T_i|+3N.$
Let $A=\{a_1,a_2,\dots\}\subseteq \{6,8,10,12,14\}$ denote the $x$ such that $c_x\geq n\eta^3 /2x$. If $A$ is $\{8,12\}$ or any set with order $1$, then the problem could be solved by Proposition \ref{SCS}, Thus, we only need to consider the case where $A$ is neither $\{8,12\}$ nor any set with order $1$.  In this case, the greatest common divisor of $a_1,a_2,\dots$ is $2$, therefore there exist integers $b_1,b_2,\dots$ such that $a_1b_1+a_2b_2+\dots=2$.

Define a \emph{A-transformation} from a set of cycles $\mathcal{C''}$ to another set of cycles $\mathcal{C'}$: $\mathcal{A}:\mathcal{C''}\rightarrow \mathcal{C'}$ as following: for every $a_l\in A$, if $b_l$ is positive, then remove $b_l$ numbers of $C_{a_l}$ from $\mathcal{C''}$ into $\mathcal{C'}$;  if $b_l$ is negative, then remove $-b_l$ numbers of $C_{a_l}$ from $\mathcal{C'}$ into $\mathcal{C''}$. After a A-transformation from $\mathcal{C''}$ to $\mathcal{C'}$,  $\sum_{C \in \mathcal{C'}} |C|$ increases by 2 compared to before while $\sum_{C \in \mathcal{C''}} |C|$ decreases by 2 compared to before. 

Note that for every $i$ and every $x$, $\big|x c_x^i-x c_x|T_i| /\sum |T_i|\big|\leq x$, thus we have: 
\[
\big|\sum_{C \in \overline{\mathcal{F}}_i} |C|-|T_i|\big|\leq \sum_x \big|x c_x^i-x c_x|T_i| /\sum |T_i|\big|
\leq 6+8+10+12+14+2(N-1)\log n \leq 2N \log n.
\]

We use the following cyclic process for each $i \in [N-1]$:

In step $i$, do some A-transformation from $\overline{\mathcal{F}}_{i}$ to $\overline{\mathcal{F}}_{i+1}$ if $\sum_{C \in \overline{\mathcal{F}}_i} |C|-|T_i|>0$ or do some A-transformation from $\overline{\mathcal{F}}_{i+1}$ to $\overline{\mathcal{F}}_i$ if $\sum_{C \in \overline{\mathcal{F}}_i} |C|-|T_i|<0$ until $\sum_{C \in \overline{\mathcal{F}}_i} |C|-|T_i|=0$. Since $\big|\sum_{C \in \overline{\mathcal{F}}_i} |C|-|T_i|\big|\leq i 2N \log n$, by no more than $i N \log n$ numbers of A-transformation this progress could be reached. Since every $x \in A$ satisfies $c_x\geq n\eta^3 /2x$ and thus for every $x\in A$ and every $j\in [N]$, $c^j_x\gg i N \log n(|b_1|+|b_2|+\cdots)$, these A-transformation exists.

Denote these cycles as $\mathcal{F}_i$. Note that the number of odd cycles in  $\mathcal{F}_i$ is no more than $k'|T_i| / \sum |T_i| + 3N + (i-1) N \log n(|b_1|+|b_2|+\cdots)+i N \log n(|b_1|+|b_2|+\cdots)  \leq \frac{k (1 + \eta^{3})}{N}$.

Having done this step, we have $$\big|\sum_{C \in \overline{\mathcal{F}}_{i+1}} |C|-|T_{i+1}|\big|\leq 2N\log n+i 2N\log n=(i+1) 2N\log n,$$ and the number of odd cycles in $\overline{\mathcal{F}}_{i+1}$ is no more than $k'|T_i| / \sum |T_i| + 3N+i N\log n(|b_1|+|b_2|+\cdots).$ This implies the following fact.

\begin{fact}\label{processfact2}
    Having finished step $i$ we have $\big|\sum_{C \in \overline{\mathcal{F}}_{i+1}} |C|-|T_{i+1}|\big|\leq (i+1) 2N \log n$ and the number of odd cycles in $\overline{\mathcal{F}}_{i+1}$ is no more than $k'|T_{i+1}| / \sum |T_{i+1}|+3N+i N \log n(|b_1|+|b_2|+\cdots)$.
\end{fact}

When it comes to $\mathcal{F}_N$, since every other triple is used up, its order is the sum of the number of the cycles left in $\mathcal{C}_0$. And the number of odd cycles in it is no more than $k'|T_i| / \sum |T_i| + 3N+N^2\log n(|b_1|+|b_2|+\cdots)\leq \frac{k (1 + \eta^{3})}{N}$.

\end{proof}

\subsubsection{Step 3}
Now, for each triple still denoted as $T_i = (X_i, Y_i, Z_i)$ for every $i$, since no more than $N^{10} \log n$ vertices were used during the order adjustment to balance the triples and embed the cycles in $\mathcal{C}_2$, it remains $(2\varepsilon^*, d^*/2)$-half-superregular by Lemma \ref{slicinglemma}. 

Moreover, for every $i\in [N]$, we have:
$$
\frac{(2a+b)}{a} \times \min\{|X_i|,|Y_i|\} - (|X_i|+|Y_i|) - |Z_i| \geq \sqrt{m} - (a+b) N^{10} \log n > 0.
$$
Additionally, since
$$
\frac{|Z_i|+\sqrt{m}}{|X_i|+|Y_i|+2N^{10}\log n} \leq \frac{b}{2a},
$$
it follows that
$$
\frac{|Z_i|}{|X_i|+|Y_i|} < \frac{|Z_i|+\sqrt{m}}{|X_i|+|Y_i|+2N^{10}\log n} \leq \frac{b}{2a}.
$$
Furthermore, we have
$$
|Z_i| - \max\{|X_i|,|Y_i|\} \geq \eta^3 m - N^{10} \log n > 0.
$$
Thus, if we denote $(X_i, Y_i, Z_i)$ as $(V_1, V_2, V_3)$ with $|V_1| \leq |V_2| \leq |V_3|$, we conclude that
$$
\begin{aligned}
    & \frac{|V_1|+|V_2|}{|V_3|} \geq \frac{2a}{b}, \\
    & \frac{a+b}{a} |V_1| \geq |V_2| + |V_3|.
\end{aligned}
$$

\begin{claim}\label{endembed}
 For every $i \in [N]$, there is a perfect embedding from $\mathcal{F}_i$ into $T_i$ by Lemma \ref{weakblowup}.
\end{claim}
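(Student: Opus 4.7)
The strategy is to apply Lemma~\ref{weakblowup} once with reduced graph $R$ equal to the triangle $K_3$ on vertex set $[3]$. Viewing $\mathcal{F}_i$ as the 1-expansion $G_i^+$, where $G_i$ is a vertex-disjoint union of graph cycles (a loose cycle $C_{n_j}\in\mathcal{F}_i$ corresponds to a 2-graph cycle of length $n_j/2$), we have $|V(G_i)|=|E(G_i)|=|T_i|/2$, and the number of odd cycles in $G_i$ is at most $k':=k(1+\eta^3)/N+2\sqrt{n}$ by Claim~\ref{embedlast}. The plan is to partition each part of $T_i$ into two further subsets, writing
\[
X_i=V_1\cup V_{23},\qquad Y_i=V_2\cup V_{13},\qquad Z_i=V_3\cup V_{12},
\]
so that $V_1,V_2,V_3$ will host the color classes of a proper 3-coloring of $G_i$ and $V_{12},V_{13},V_{23}$ will host the middle vertices of the 1-expansion corresponding to the bichromatic edges of the three types. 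Each of the three blow-up triples $(V_1,V_2,V_{12})$, $(V_1,V_3,V_{13})$, $(V_2,V_3,V_{23})$ then lies inside $T_i$, so a random proportional splitting, together with Lemma~\ref{slicinglemma}(iii) and a union bound, renders all of them simultaneously half-superregular with slightly weakened parameters.

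The nontrivial step is choosing sizes that both come from a proper 3-coloring of $G_i$ and fit the splitting. Since $G_i$ is $2$-regular, once the color-class sizes $(\alpha,\beta,\gamma)$ are fixed, the bichromatic edge counts are forced to be $e_{12}=\alpha+\beta-\gamma$, $e_{13}=\alpha+\gamma-\beta$, $e_{23}=\beta+\gamma-\alpha$. Solving the resulting six linear constraints using $|X_i|=|Y_i|$ and $|Z_i|=(b/a)|X_i|$ yields the unique tuple
\[
\alpha=\beta=|Z_i|/2,\qquad \gamma=\tfrac{2a-b}{2(2a+b)}|T_i|,
\]
which is positive since $b<2a$. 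The two inequalities $(|V_1|+|V_2|)/|V_3|\geq 2a/b$ and $\tfrac{a+b}{a}|V_1|\geq|V_2|+|V_3|$ established immediately before the claim translate, under this correspondence, into the tripartite degree conditions $k'\leq\min(\alpha,\beta,\gamma)$ and $\max(\alpha,\beta,\gamma)\leq(|V(G_i)|-k')/2$ demanded by Lemma~\ref{step3graph}; the inequality $b/a\leq 2(n-2k)/(n+2k)-\eta^2$ coming from the $(n,k,\eta)$-good definition provides a slack of order $\eta^2|T_i|$ that comfortably absorbs the $k'$ odd cycles. Running the inductive coloring argument from the proof of Lemma~\ref{step3graph}, driven cycle-by-cycle by Lemma~\ref{colorcycle}, then produces the required proper 3-coloring of $G_i$.

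With the six sizes and the coloring fixed, Lemma~\ref{weakblowup} is applied with $R=K_3$, $\mathcal{H}=T_i$ endowed with its six-part decomposition, $G=G_i$ (which is $2$-regular and hence trivially $\mu$-separable with $\Delta(G)=2$), homomorphism $\phi$ given by the 3-coloring, and empty anchor sets $U_j=\emptyset$. The conclusion is a copy of $G_i^+=\mathcal{F}_i$ using every vertex of $T_i$, which is the desired perfect embedding. The main obstacle throughout is the combinatorial sizing-and-coloring step in the second paragraph; every other part is a routine invocation of the slicing or blow-up lemma.
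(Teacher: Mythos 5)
Your proof is correct and takes essentially the same route as the paper: the same six-part split of $T_i$ (your $\alpha,\beta,\gamma$ are the paper's $x_1,x_2,x_3$, given by the same half-sum formulas), the same reduction to Lemma \ref{step3graph} and Lemma \ref{colorcycle} via the two inequalities and the $(n,k,\eta)$-goodness slack, the same random splitting with Lemma \ref{slicinglemma}, and a single application of Theorem \ref{weakblowup} with $R=K_3$. The only (harmless) idealization is treating $|X_i|=|Y_i|$ and $|Z_i|=(b/a)|X_i|$ as exact equalities; after the Step 2 adjustments these hold only approximately, which the $\eta^2|T_i|$ slack you invoke absorbs, exactly as in the paper.
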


\begin{proof}
We prove it for each $i \in [N]$. Denote $|T_i| = m^*$ and let the number of odd cycles in $\mathcal{F}_i$ be $k'_i$. It follows that $0 <  \varepsilon^* \ll d^* ,\xi\ll \eta \ll 1$ and $T_i$ is $(2\varepsilon^*, d^*/2)$-half-superregular with $m^* = \sum_{C \in \mathcal{F}_i}|C|$. Besides, $\frac{1}{m^*}<\frac{a+b}{m}\ll\varepsilon^*$.

Since $k'_i \leq \frac{k(1+\eta^3)}{N}+2\sqrt{n}$ and $\frac{(1-3\sqrt{\xi})n}{N} \leq m^* \leq \frac{n}{N}$, we have 
$$
\frac{2m^* - 4k'_i}{m^* + 2k'_i} \geq \frac{2n (1-3\sqrt{\xi})-4k(1+\eta^3)-8\sqrt{n}}{(n + 2k)(1+\eta^3)+4\sqrt{n}} \geq \frac{2n - 4k}{(n + 2k)(1+\eta^{2.5})}.
$$
Since $a, b$ are $(n,k,\eta)$-good, we have $b/a\leq 2-\eta^2$ and
$$
\frac{b}{a} \leq \frac{2n - 4k}{n + 2k} - \eta^2 \leq (1+\eta^{2.5})\frac{2m^* - 4k'_i}{m^* + 2k'_i} - \eta^2 \leq \frac{2m^* - 4k'_i}{m^* + 2k'_i} - 0.1\eta^2.
$$
Thus, we obtain
$$
\frac{m^* a}{2a+b} = \frac{m^*}{2+b/a} \geq m^*/4 + k'/2.
$$

Let $2x_i = |V_j| - x_j + |V_h| - x_h$ for every $\{i,j,h\} = \{1,2,3\}$. We have
$$
x_1 = \frac{1}{2}(|V_2| + |V_3| - |V_1|), \quad x_2 = \frac{1}{2}(|V_1| + |V_3| - |V_2|), \quad x_3 = \frac{1}{2}(|V_1| + |V_2| - |V_3|).
$$ 

We claim that there exists a tripartite graph $G_0$ with each part of order $x_1, x_2, x_3$, consisting of vertex-disjoint cycles $C_{m_1/2}, C_{m_2/2}, \ldots, C_{m_r/2}$ where $k'_i$ of these are odd. Let $\{i, j, h\} = \{1, 2, 3\}$ and $x_h = \max\{x_i, x_j, x_h\}$. Since $\frac{a+b}{a}|V_1| \geq |V_2| + |V_3|=m^*-|V_1|$, we have $|V_1| \geq \frac{m^* a}{2a+b}$. Thus the minimum degree of the complete graph $K_{x_i x_j x_h}$ is no less than
$$
\begin{aligned}
   \min \{x_i+x_j, x_j+x_h, x_i+x_h\} = |V_1| \geq \frac{m^* a}{2a+b} \geq m^*/4 + k'_i/2 = \frac{x_1+x_2+x_3}{2} + k'_i/2.
\end{aligned}
$$
By Lemma \ref{step3graph}, there exists a $3$-partite graph $G_0$ with each part of order $x_1, x_2, x_3$, consisting of vertex-disjoint cycles $C_{\frac{m_1}{2}}, C_{\frac{m_2}{2}}, \ldots, C_{\frac{m_r}{2}}$ where $k'_i$ of these are odd.

Taking a uniformly random partition of $V_1, V_2, V_3$ into disjoint sets such that $V_1 = X_1 \cup X_2$, $V_2 = Y_1 \cup Y_2$, $V_3 = Z_1 \cup Z_2$ with $|X_1| = x_1$, $|Y_1| = x_2$ and $|Z_1| = x_3$. For any $i \in [3]$, the number of edges in $G_0$ adjacent to the $x_i$ part is $2x_i$, and for any $i, j, h \in [3]$, the number of edges between the $x_i$ part and the $x_j$ part is $x_i + x_j - x_h$, and $(x_i + x_j - x_h)+x_h=|V_h|$. Since $b/a\leq 2-\eta^2$ and $\frac{a+b}{a}|V_1| \geq |V_2| + |V_3|$, 
$$
\begin{aligned}
    3|V_1|-|V_2|-|V_3|\geq 3|V_1|-\frac{a+b}{a}|V_1|\geq m^*\frac{a}{2a+b}(3-\frac{a+b}{a}) =m^*\frac{2a-b}{2a+b}\geq \frac{\eta^2m^*}{4},
\end{aligned}
$$
we have $|V_i| - x_i = 1.5|V_i| - 0.5(|V_j| + |V_h|) \geq \frac{\eta^2m^*}{8}$, thus $\min \{|X_2|,|Y_2|,|Z_2|\} \geq \frac{\eta^2m^*}{8}$. On another aspect, since $\frac{|V_1| + |V_2|}{|V_3|} \geq 2a/b$, we have
$$
\begin{aligned}
   \min \{|X_1|,|Y_1|,|Z_1|\} \geq \frac{1}{2}(|V_1| + |V_2| - |V_3|) \geq \frac{1}{2}m^*\frac{2a-b}{2a+b}\geq\frac{\eta^2m^*}{8},
\end{aligned}
$$
thus $\min \{|X_1|,|Y_1|,|Z_1|\} \geq \frac{\eta^2m^*}{8}$.

By Lemma \ref{slicinglemma}, we can assert that, with high probability, each of the triples $(X_2, Y_1, Z_2)_\mathcal{H'}$, $(X_2, Z_1, Y_2)_\mathcal{H'}$, and $(Z_2, X_1, Y_2)_\mathcal{H'}$ is $(\frac{8}{\eta^2}\varepsilon^*, {d^*}^2 / 64)$-half-superregular, thus by union bound there exists such a partition such that all of the triples $(X_2, Y_1, Z_2)_\mathcal{H'}$, $(X_2, Z_1, Y_2)_\mathcal{H'}$, and $(Z_2, X_1, Y_2)_\mathcal{H'}$ are $(\frac{8}{\eta^2}\varepsilon^*, {d^*}^2 / 64)$-half-superregular, and let $G_0=G_0(X_1,Y_1,Z_1)$. Applying Theorem \ref{weakblowup} with $(m=m^*, \varepsilon=\frac{8}{\eta^2}\varepsilon^*, d={d^*}^2 / 64, \delta=\frac{\eta^2}{8}, \Delta=2, r=3)$, we can embed a 1-expansion of $G_0$, which contains all of the loose cycles in $\mathcal{F}_i$.

\end{proof}
\section{Concluding remarks}\label{concluding}
\begin{itemize}
    \item In this paper, we applied the regularity method and the blow-up lemma to derive an asymptotically tight bound for the embedding problem of disjoint loose cycles in 3-graphs. Based on our results, we propose the following conjecture concerning the tight bounds for embedding disjoint loose cycles:

    \begin{conjecture}
    There exists an integer $n_0$ such that for all $n \geq n_0$, the following holds: Let $\mathcal{C}$ be a $3$-graph consisting of vertex-disjoint loose cycles $C_{n_1}, C_{n_2}, \dots, C_{n_r}$ such that $\sum_{i=1}^{r} n_i = n$, and let $k$ denote the number of loose cycles with odd lengths. If $\mathcal{H}$ is an $n$-vertex $3$-graph with $\delta_2(\mathcal{H}) \geq \frac{n + 2k}{4}$, then $\mathcal{H}$ contains $\mathcal{C}$ as a spanning subhypergraph.
    \end{conjecture}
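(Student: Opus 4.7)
The plan is to mount the standard ``extremal versus non-extremal'' dichotomy that has been used to convert asymptotic codegree thresholds into exact ones (compare Czygrinow--Molla's proof of the exact loose-Hamilton-cycle threshold $n/4$, or Czygrinow's exact $C_s$-factor threshold for even $s\ge 6$). Fix a small $\delta>0$. Call $\mathcal{H}$ \emph{$\delta$-extremal} if there is a partition $V(\mathcal{H})=A\cup B$ with $\bigl||A|-(n+2k)/4\bigr|\leq\delta n$ such that at most $\delta n^3$ hyperedges of $\mathcal{H}$ lie entirely inside $B$; this mirrors the shape of Proposition~\ref{extremal}, where $A$ plays the role of a minimum vertex cover of $\mathcal{C}$.

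\emph{Non-extremal case.} I would first establish a stability companion of Theorem~\ref{oddandeven}: if $\delta_2(\mathcal{H})\geq(n+2k)/4$ and $\mathcal{H}$ is not $\delta$-extremal, then one can effectively recover an $\eta=\eta(\delta)>0$ slack in the codegree condition, so that Theorem~\ref{oddandeven} applies. Heuristically, the absence of a near-cover of size $(n+2k)/4$ should force too many hyperedges to lie ``in the wrong place'', and a standard averaging/supersaturation argument on the codegree distribution then yields a positive fraction of pairs with codegree strictly above $(n+2k)/(4n)+\eta(\delta)$. This is exactly what is needed if one tracks the places in the proof of Theorem~\ref{oddandeven} where the $\eta n$ cushion is genuinely used --- chiefly Claims~\ref{commonneighbor} and~\ref{extravtx}, together with the reduced-graph tiling of Claim~\ref{H8tiling}.

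\emph{Extremal case.} Now $\mathcal{H}$ is pinned very close to the extremal example: $|A|=\lceil(n+2k)/4\rceil$ up to an $o(n)$ error, $\mathcal{H}[B]$ is nearly empty, and for almost every pair $(a,b)\in A\times B$ the codegree link $N_\mathcal{H}(a,b)$ lies essentially inside $B$. I would embed each $C_{n_i}$ so that its \emph{cover vertices} (of which there are $\lceil n_i/4\rceil$) land in $A$ and its non-cover vertices land in $B$. Since $\sum_i \lceil n_i/4\rceil=(n+2k)/4=|A|$, the $A$-budget matches the cover demand exactly. The construction splits into an absorption step that cleans up the $o(n)$ vertices violating the $A/B$ structure and the $o(n)$ misplaced hyperedges inside $B$, followed by a greedy cyclewise step in which each loose hyperedge is realised as a triple $(a,b,b')\in A\times B\times B$, using that the $(A\times B)$-into-$B$ codegrees are almost saturated.

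The main obstacle is this extremal case, where the vertex accounting has \emph{zero slack}: the cover demand $\sum_i \lceil n_i/4\rceil$ must equal $|A|$ on the nose, so mis-distributing even a single cover vertex is fatal. Especially delicate are families $\mathcal{C}$ consisting of very short loose cycles (many copies of $C_6$, $C_{10}$, etc.), which leave essentially no local room to re-route cover vertices between cycles; one also has to reconcile parity and divisibility glitches arising when $(n+2k)/4$ is not an integer. The technical heart of the conjecture will most likely require an absorber tailored to the \emph{vertex-cover} structure (rather than the usual hyperedge absorber), together with careful parity bookkeeping to decide exactly which cycles use how many $A$-vertices --- and it is at this last step where I would expect the genuinely new ideas (beyond what already appears in the proof of Theorem~\ref{oddandeven}) to be needed.
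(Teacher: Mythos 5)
This statement is posed in the paper as an open conjecture (it is the exact-threshold strengthening of Theorem~\ref{oddandeven}); the paper contains no proof of it, so there is nothing to compare your argument against, and what you have written is a research plan rather than a proof. Moreover the plan has two genuine gaps. First, in the non-extremal case, the claimed reduction to Theorem~\ref{oddandeven} does not work as stated: you argue that non-extremality plus an averaging/supersaturation argument ``recovers an $\eta(\delta)n$ slack in the codegree condition''. But $\delta_2$ is an all-pairs minimum, and the places where the cushion $\eta n$ is genuinely consumed in the paper's proof --- Lemma~\ref{degreeinherit} feeding the tiling in Claim~\ref{H8tiling}, and above all Claim~\ref{commonneighbor} and Claim~\ref{connect}, which need \emph{every} pair $u,v$ to have many common neighbours in the relevant sets --- cannot be salvaged by knowing that merely a positive fraction of pairs have codegree above $(n+2k)/4+\eta(\delta)n$. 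A $3$-graph can have minimum codegree exactly $(n+2k)/4$, possess no near-cover of size $(n+2k)/4$, and still have many pairs whose codegree sits exactly at the minimum; for those pairs the paper's machinery gives you nothing. What the extremal/non-extremal method actually requires (as in Czygrinow--Molla and Czygrinow) is to re-run the entire embedding argument under the exact bound together with the structural information from non-extremality, which is the substance of the problem, not a two-line supersaturation step.

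Second, the extremal case is not carried out at all: you correctly identify that the $A$-budget $\sum_i\lceil n_i/4\rceil=(n+2k)/4$ must be met with zero slack, that short cycles ($C_6$, $C_{10}$, \ldots) leave no room to re-route cover vertices, and that divisibility/parity of $(n+2k)/4$ must be handled, but you then defer exactly these points to ``genuinely new ideas''. Since the asymptotic result is already Theorem~\ref{oddandeven}, the conjecture \emph{is} precisely the extremal analysis plus the exact-bound non-extremal embedding; a proposal that leaves both of these open does not establish the statement. As a plan it is reasonable and consistent with how exact thresholds of this type have been proved, but as it stands it proves nothing beyond what the paper already proves.
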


    \item Additionally, we pose the following general problem regarding the embedding of disjoint loose cycles in large $k$-graphs:

    \begin{question}
    Let $k > 3$ be a positive integer, and let $\mathcal{C}$ be a $k$-graph consisting of vertex-disjoint loose cycles $C_{n_1}, C_{n_2}, \dots, C_{n_r}$ such that $\sum_{i=1}^{r} n_i = n$. Let $\mathcal{H}$ be an $n$-vertex $k$-graph. What is the minimum codegree condition on $\mathcal{H}$ that guarantees $\mathcal{C}$ is a spanning subhypergraph of $\mathcal{H}$ when $n$ is sufficiently large?
    \end{question}

    \item Finally, we examine the $1$-expansion version of the Hajnal-Szemerédi theorem, which leads to the following related question:

    \begin{question}
    Let $t$ be a positive integer, and let $K^+_t$ denote the $1$-expansion of $K_t$. Given an $n$-vertex $3$-graph $\mathcal{H}$, what is the minimum codegree condition on $\mathcal{H}$ that ensures the existence of a $K^+_t$-factor when $n$ is sufficiently large, and when $t + \frac{t(t-1)}{2}$ divides $n$?
    \end{question}
\end{itemize}

\section{Acknowledgements}
We are sincerely grateful to Professor Jie Han for his insightful suggestions. Yangyang Cheng was supported by the PhD studentship of ERC Advanced Grant (883810). Guanghui Wang was supported by the Natural Science Foundation of China (12231018).

\appendix
\makeatletter
\renewcommand{\thecase}{\arabic{case}}
\@addtoreset{case}{section}  
\makeatother
\section{Appendix}

\subsection{}\label{appendix}
In this subsection, we deduce Theorem \ref{oddandeven} from Theorem \ref{reducethm0}. The proof is similar with the work of Khan in \cite{khan2011spanning}.
First, we prove that every $n$-vertex $3$-graph with minimum codegree at least $(1/4+o(1))n$ contains a loose cycle on any even number of vertices as shown in Lemma \ref{pancyclic}. For this, we need the following two results.

\begin{lemma}[\cite{chva}]\label{pro}
Let
\[F(n, \lambda, k, \delta)=\binom{n}{k}^{-1}\sum_{i=0}^{(\lambda-\delta)k}\binom{\lambda n}{i}\binom{(1-\lambda)n}{k-i}\]
denote the probability that a random subset of $k$ elements out of $n$, $\lambda n$ of which are marked, contains at most $(\lambda-\delta)k$ marked elements; then \[F(n, \lambda, k, \delta)\leq e^{-2\delta^2k}.\]
\end{lemma}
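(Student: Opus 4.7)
The plan is to treat this as a classical Chernoff-type tail bound for the hypergeometric distribution, following Chvátal's original argument. Let $X$ denote the number of marked elements in a uniformly random $k$-subset of the $n$-element ground set, so that $F(n,\lambda,k,\delta)=\mathbb{P}(X \leq (\lambda-\delta)k)$. The first step is the standard exponential Markov inequality: for any $s>0$,
\[
F(n,\lambda,k,\delta) \;\leq\; e^{s(\lambda-\delta)k}\,\mathbb{E}\bigl[e^{-sX}\bigr].
\]
This reduces the problem to upper-bounding the moment generating function of the hypergeometric random variable $X$.

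The key step, and what I anticipate to be the main obstacle, is Hoeffding's comparison: if $Y\sim \mathrm{Bin}(k,\lambda)$, then $\mathbb{E}[e^{-sX}] \leq \mathbb{E}[e^{-sY}]$ for every $s\geq 0$. I would prove this by writing $X$ as a sum of exchangeable indicators obtained from sampling without replacement, then invoking the fact that for any convex function $\varphi$,
\[
\mathbb{E}\!\left[\varphi\!\left(\tfrac{1}{k}\sum_{i=1}^{k}\mathbf{1}_{A_i}\right)\right]
\]
is no larger under sampling without replacement than under sampling with replacement. This convexity/majorization fact can be established via a coupling that swaps one ``with-replacement'' draw for a ``without-replacement'' draw and notes that such a swap moves the empirical average toward its mean, so Jensen's inequality applies term-by-term.

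Once the comparison is in place, the rest is a standard optimization. Using $\mathbb{E}[e^{-sY}]=(1-\lambda+\lambda e^{-s})^{k}$, we obtain
\[
F(n,\lambda,k,\delta) \;\leq\; \Bigl(e^{s(\lambda-\delta)}(1-\lambda+\lambda e^{-s})\Bigr)^{k}.
\]
Optimising in $s$ by choosing $e^{-s}=\dfrac{(\lambda-\delta)(1-\lambda)}{\lambda(1-\lambda+\delta)}$ yields the Kullback–Leibler bound
\[
F(n,\lambda,k,\delta) \;\leq\; \exp\!\Bigl(-k\,D\!\left((\lambda-\delta)\,\|\,\lambda\right)\Bigr),
\]
where $D(p\|q)=p\log\tfrac{p}{q}+(1-p)\log\tfrac{1-p}{1-q}$.

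The final step is to lower-bound this KL divergence. Applying the Pinsker-type inequality $D(p\|q)\geq 2(p-q)^{2}$, which itself follows by integrating the second-derivative estimate $\partial^{2}_{p}D(p\|q)=\tfrac{1}{p(1-p)}\geq 4$ twice from $q$ to $p$, gives $D((\lambda-\delta)\|\lambda)\geq 2\delta^{2}$ and hence $F(n,\lambda,k,\delta)\leq e^{-2\delta^{2}k}$, as required. The only delicate piece of this plan is the Hoeffding comparison; the remainder is a mechanical Chernoff optimization.
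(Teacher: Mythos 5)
The paper itself gives no proof of this lemma: it is quoted verbatim from Chv\'atal's note, which establishes the bound by an elementary direct estimate of the hypergeometric moment generating function. Your route is the classical Hoeffding-style alternative, and it is correct in outline: exponential Markov, comparison of the hypergeometric MGF with the binomial one, Chernoff optimization giving $\exp(-k\,D((\lambda-\delta)\,\|\,\lambda))$ (your optimizing value of $e^{-s}$ is right), and the Pinsker-type estimate $D(p\,\|\,q)\geq 2(p-q)^2$ obtained from $\partial_p^2 D=\tfrac{1}{p(1-p)}\geq 4$, which indeed yields $e^{-2\delta^2 k}$. Compared with Chv\'atal's argument, your proof buys generality (it is really the statement that sampling without replacement is dominated by sampling with replacement for convex test functions, so the same computation covers all Hoeffding-type bounds), at the price of needing that domination as a separate lemma; Chv\'atal's proof avoids it entirely.

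The one step you should not leave as stated is precisely that comparison $\mathbb{E}[e^{-sX}]\leq\mathbb{E}[e^{-sY}]$ with $Y\sim\mathrm{Bin}(k,\lambda)$. The sentence ``such a swap moves the empirical average toward its mean, so Jensen's inequality applies term-by-term'' is not yet an argument: a single exchange of a with-replacement draw for a without-replacement draw does not obviously produce a mean-preserving contraction, and Jensen is not applied to any identified conditional expectation. Two standard ways to close this: (i) invoke or reprove Hoeffding's Theorem 4 (1963), whose proof exhibits the without-replacement sum as (in distribution) a conditional expectation of the with-replacement sum, after which Jensen applied to the convex map $x\mapsto e^{-sx}$ gives the inequality; or (ii) write $X=\sum_{j=1}^{k}I_j$ with $I_j$ the indicator that the $j$-th draw is marked, note these indicators are negatively associated, and conclude $\mathbb{E}\bigl[\prod_j e^{-sI_j}\bigr]\leq\prod_j\mathbb{E}\bigl[e^{-sI_j}\bigr]=(1-\lambda+\lambda e^{-s})^{k}$ (negative association itself then needs a citation or a short proof). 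Also record the harmless housekeeping: the bound is trivial when $\delta>\lambda$, the summation limit $(\lambda-\delta)k$ is a floor, and the comparison is only needed for $s\geq 0$. With the comparison step properly justified, your proof is complete and fully equivalent in strength to the cited result.
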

\begin{theorem}[\cite{czygrinow2014siam}]\label{2014siam}
There is an integer $n_0$ such that every $3$-graph $\mathcal{H}$ on $n\geq n_0$ vertices where $n$ is even and with minimum codegree at least $n/4$ contains a loose Hamilton cycle.
\end{theorem}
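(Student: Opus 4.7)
The plan is to prove the tight codegree bound $n/4$ via the standard absorption-plus-stability dichotomy template for tight-threshold Hamiltonicity in hypergraphs, sharpening the approximate bound of Theorem~\ref{ko}. Since the extremal construction in Proposition~\ref{extremal}, specialized to $r=1$, already shows that the threshold is at most $\lfloor n/4\rfloor$ (taking $|A|=n/4-1$ as a would-be vertex cover kills all loose Hamilton cycles because the vertex cover number of $C_n$ is $n/4$ for even $n$), the real task is to show that any $\mathcal{H}$ which is not structurally close to this vertex-cover obstruction admits a loose Hamilton cycle. I would call $\mathcal{H}$ \emph{$\beta$-extremal} if there exists $A\subseteq V(\mathcal{H})$ with $|A|=(1+o(1))n/4$ and $B:=V\setminus A$ containing at most $\beta n^3$ fully-internal hyperedges, and split into the non-extremal and extremal cases.

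In the \emph{non-extremal case}, I would first construct an \emph{absorbing loose path} $P_{\mathrm{abs}}$ with $|V(P_{\mathrm{abs}})|=o(n)$ such that any even-sized $U\subseteq V\setminus V(P_{\mathrm{abs}})$ with $|U|\leq\gamma n$ can be inserted into $P_{\mathrm{abs}}$ while preserving its endpoints. The codegree bound $n/4$ ensures that each vertex has many local absorbing gadgets (3-edge loose paths that can optionally swallow $v$), so a random sampling argument produces such $P_{\mathrm{abs}}$ with positive probability. I would then set aside a small random \emph{reservoir} $R$ of connector vertices. Using non-extremality, a stability-based strengthening of Theorem~\ref{ko} delivers a long loose path covering all but $o(n)$ vertices of $\mathcal{H}-V(P_{\mathrm{abs}})-R$; finally, $R$ is used to glue this path to $P_{\mathrm{abs}}$ into a cycle, and $P_{\mathrm{abs}}$ absorbs the $o(n)$ leftovers to yield a spanning loose Hamilton cycle.

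In the \emph{extremal case}, $A$ serves as a near-vertex-cover of size $\approx n/4$ and $B$ is nearly independent of size $\approx 3n/4$. A loose Hamilton cycle on $n$ vertices has $n/2$ hyperedges, with $n/2$ degree-two ``shared'' vertices and $n/2$ degree-one ``middle'' vertices; in the extremal regime these roles are forced to align with $A$ and $B$ respectively, so the $B$-vertices must be paired up and each pair $\{b,b'\}$ connected through some $a\in A$ with $(abb')\in E(\mathcal{H})$. I would realize this by building the auxiliary link graph on $B$ whose edges are the pairs that have at least one $A$-connector, finding a suitable Hamilton-like structure there, and then injecting pairs to distinct $A$-connectors via Hall-type arguments, using that each pair in $B$ has at least $n/4 - |N(b,b')\cap B|$ connectors in $A$.

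The hard part is the extremal case. Boundary situations, where $|A|$ is only slightly less than $n/4$, where a small set of vertices in $B$ have atypically few connectors in $A$, or where residual hyperedges inside $B$ interfere, all require delicate exchange arguments to correct. Moreover, matching pairs in $B$ to \emph{distinct} $A$-connectors along a cyclic order, and ensuring parity works out (so that $n/2$ shared and $n/2$ middle vertices are correctly assigned), is a nontrivial ad-hoc construction; this is where the Czygrinow–Molla argument expends most of its effort, and where I would expect the main technical obstacle to lie.
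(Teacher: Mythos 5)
This statement is not proved in the paper at all: it is quoted verbatim from Czygrinow and Molla \cite{czygrinow2014siam} and used as a black box (in the Appendix, to prove Lemma \ref{pancyclic}). So there is no internal proof to compare against; what you have written is an outline of how one might reprove the cited theorem, and indeed it follows the same general route as the original paper (absorbing path plus reservoir, then a non-extremal/extremal dichotomy around the vertex-cover obstruction). As a proof, however, it has genuine gaps rather than being a complete alternative argument: the construction of the absorbing path at the exact threshold $n/4$ (where there is no $\eta n$ slack and parity matters), the ``stability-based strengthening of Theorem \ref{ko}'' giving a long loose path in the non-extremal case, and the entire extremal case are all asserted rather than carried out, and you yourself identify the extremal case as the place where the work lies. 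Since the whole point of the statement is the tight constant, deferring exactly these steps means the proposal does not yet constitute a proof.

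One concrete structural error in your extremal-case setup: if $A$ is a cover of size exactly $n/4$, then every one of the $n/2$ hyperedges of a loose Hamilton cycle must contain exactly one vertex of $A$, and each $A$-vertex must be a degree-two (shared) vertex covering two consecutive hyperedges. But the cycle has $n/2$ shared vertices in total, so only half of them can lie in $A$; the other $n/4$ shared vertices lie in $B$. Consequently the $B$-vertices are not ``paired up with each pair joined through one $a\in A$'': between two consecutive $A$-vertices of the cycle there are three consecutive $B$-vertices, so the correct auxiliary object is a tiling of the link graph on $B$ (pairs $\{b,b'\}$ having a connector in $A$) by vertex-disjoint paths on three vertices, i.e.\ roughly $n/4$ such paths, together with an injection of the $n/2$ resulting $B$-pairs to distinct $A$-connectors. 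Your Hall-type argument is therefore set up on the wrong structure (a perfect matching on $B$ would have the wrong count, $3n/8$ pairs versus the $n/2$ hyperedges available), and the parity bookkeeping when $n\equiv 2\pmod 4$ (where the cover number of $C_n$ is $(n+2)/4$, not $n/4$) also needs care. None of this affects the paper, which legitimately cites the result, but it is where your proposed proof would break down as written.
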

\begin{lemma}\label{pancyclic}
For any $\gamma>0$, there exists a constant $N=N(\gamma)$ such that if $\mathcal{H}$ is a $3$-graph on $n\geq N$ vertices with minimum codegree $\delta_2(\mathcal{H})\geq (1/4+\gamma)n$, then $\mathcal{H}$ is super-pancyclic, i.e., for any even integer $q$ with $6\leq q\leq n$, $\mathcal{H}$ contains a loose cycle on $q$ vertices.
\end{lemma}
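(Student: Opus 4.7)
The plan is to handle small and large $q$ separately. In both cases I present the desired loose cycle as $v_1 v_2 \cdots v_q$ cyclically with hyperedges $\{v_{2i-1},v_{2i},v_{2i+1}\}$ for $i = 1, \ldots, q/2$ (indices mod $q$), using $q/2$ hyperedges that overlap in exactly one vertex between consecutive edges. For \emph{small} $q$, say $6 \leq q \leq (1/4 + \gamma/2)n$, I would build the cycle greedily. Pick $v_1, v_2 \in V(\mathcal{H})$ arbitrarily distinct, then $v_3 \in N_\mathcal{H}(v_1, v_2)$ (non-empty by the codegree assumption). Inductively, given $v_1, \ldots, v_{2i-1}$, pick any $v_{2i+1} \in V(\mathcal{H}) \setminus \{v_1, \ldots, v_{2i-1}\}$ and then $v_{2i} \in N_\mathcal{H}(v_{2i-1}, v_{2i+1}) \setminus \{v_1, \ldots, v_{2i-1}, v_{2i+1}\}$; the latter set is non-empty since $|N_\mathcal{H}(v_{2i-1}, v_{2i+1})| \geq (1/4+\gamma)n$ exceeds $q$, hence exceeds the number of vertices to avoid. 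Finally, close the cycle by choosing $v_q \in N_\mathcal{H}(v_{q-1}, v_1) \setminus \{v_2, \ldots, v_{q-1}\}$, which is possible by the same counting.

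For \emph{large} $q$, i.e., $(1/4+\gamma/2)n < q \leq n$, I would draw a uniformly random subset $S \subseteq V(\mathcal{H})$ with $|S| = q$ and apply Theorem~\ref{2014siam} to $\mathcal{H}[S]$. For each pair $\{u,v\} \in \binom{V(\mathcal{H})}{2}$, the count $|N_\mathcal{H}(u,v) \cap S|$ is hypergeometric with $|N_\mathcal{H}(u,v)| \geq (1/4+\gamma)n$ marked elements, so Lemma~\ref{pro} applied with $\delta = \gamma/2$ yields
\[
\Pr\!\left[\,|N_\mathcal{H}(u,v) \cap S| < (1/4 + \gamma/2)q\,\right] \leq e^{-\gamma^2 q/2}.
\]
A union bound over the $\binom{n}{2}$ pairs gives total failure probability at most $n^2 e^{-\gamma^2 q/2}$, which is strictly less than $1$ once $n \geq N(\gamma)$ is large enough (since $q$ grows linearly in $n$). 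Hence some $S$ of size $q$ satisfies $|N_\mathcal{H}(u,v)\cap S| \geq q/4$ for every pair $\{u,v\}\subseteq V(\mathcal{H})$, and in particular $\delta_2(\mathcal{H}[S]) \geq q/4$. Since $q$ is even and, for $n \geq N(\gamma)$, $q$ exceeds the threshold $n_0$ of Theorem~\ref{2014siam}, that theorem produces a loose Hamilton cycle in $\mathcal{H}[S]$, which is the desired loose cycle on $q$ vertices in $\mathcal{H}$.

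The proof itself should be routine; the only delicacy is choosing $N(\gamma)$ so that both cases are simultaneously valid at their common boundary. The greedy argument needs $q \leq (1/4+\gamma/2)n$ so that extensions never exhaust the pool of common neighbors, while the random-sampling argument needs $q$ large enough both to defeat the $n^2 e^{-\gamma^2 q/2}$ union bound and to exceed the threshold of Theorem~\ref{2014siam}. Both constraints are met once $n \geq N(\gamma)$ is sufficiently large in terms of $\gamma$, so the two ranges together cover every even $q \in [6,n]$.
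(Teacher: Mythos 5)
Your proof is correct. The large-$q$ case is essentially the paper's own argument: a uniformly random $q$-subset, Lemma~\ref{pro} with a union bound over pairs to get minimum codegree at least $q/4$ inside the subset, then Theorem~\ref{2014siam}; your bookkeeping (failure probability $n^2e^{-\gamma^2 q/2}<1$ since $q$ is linear in $n$, and $q\geq n_0$ for $n$ large) is the same as the paper's. Where you diverge is the small-$q$ case: the paper first invokes Theorem~\ref{ko} to get a loose Hamilton cycle, extracts from it a loose path on $q-1$ vertices, and closes it with one vertex of $N(v_1,v_{q-1})$ outside the path, whereas you build the loose cycle greedily from scratch, choosing each odd-indexed vertex arbitrarily and each even-indexed "middle" vertex from the common neighbourhood of its two odd neighbours, which only ever requires avoiding fewer than $q<(1/4+\gamma)n$ used vertices. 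Your greedy route is more elementary (it does not rely on Theorem~\ref{ko} at all) and in fact covers the wider range $q\leq(1/4+\gamma/2)n$ rather than the paper's $q\leq n/4$, while the paper's version is shorter given that Theorem~\ref{ko} is already quoted; either split of the range $[6,n]$ covers all even $q$, so both arguments are complete.
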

\begin{proof}
The proof is divided into two cases depending on $q$.

For $6\leq q\leq n/4$, choose a loose path $P=v_1v_2\ldots v_{q-1}$ on $(q-1)$ vertices (such a path exists as $\mathcal{H}$ contains a loose Hamilton cycle by Theorem \ref{ko}). Since $\delta_2(\mathcal{H})\geq (1/4+\gamma)n$, we have $|N_{V(\mathcal{H})\backslash V(P)}(v_1, v_{q-1})|\geq\gamma n>0$. Then, we can choose a vertex $v_q\in N_{V(\mathcal{H})\setminus V(P)}(v_1, v_{q-1})$ such that $v_1v_2\ldots v_{q-1}v_q$ is a loose cycle on $q$ vertices, as desired.

For $q>n/4$, choose a subset $A\subseteq V(\mathcal{H})$ with $|A|=q$ among all such sets uniformly at random. For $u,v\in V(\mathcal{H})$, let $X_{u,v}$ be the random variable $|N_{\mathcal{H}}(u,v)\cap A|$. Then, by Lemma \ref{pro}, we have
\[\mathds{P}(X_{u,v}\leq q/4)\leq e^{-2\gamma^2q}\leq e^{-\gamma^2n/2}.\]
By the union bound,
\[\mathds{P}(\exists u, v\in V(\mathcal{H}): X_{u, v}\leq q/4)\leq n^2\max_{u, v\in V(\mathcal{H})}\mathds{P}(X_{u, v}\leq q/4)\leq n^2e^{-\gamma^2n/2}.\]
We can choose $n'$ such that for $n\geq n'$, this probability tends to 0. This implies that there exists a subset $A\subseteq V(\mathcal{H})$ with $|A|=q>n/4$ such that for each pair $u,v\in A$, we have $X_{u,v}\geq |A|/4$. Choose $n_0$ to be an integer such that Theorem \ref{2014siam} holds. Let $N=\max\{n', n_0\}$. Therefore, there is a loose Hamilton cycle in $\mathcal{H}[A]$ by Theorem \ref{2014siam}.
\end{proof}

Recall that $C_t$ denotes a loose cycle on $t$ vertices. Suppose that $\mathcal{H}$ is an $n$-vertex $3$-graph with $\delta_2(\mathcal{H})\geq(n+2k)/4+\eta n\geq(1/4+\eta)n$, and $\mathcal{C}$ is a $3$-graph consisting of vertex-disjoint union of $C_{n_1}, C_{n_2}, \ldots, C_{n_r}$ satisfying $\sum_{i=1}^{r}n_i=n$. 
Let $\mathcal{C}_1$ be the set of loose cycles on at least $\log n$ vertices in $\mathcal{C}$ and $\mathcal{C}_2=\mathcal{C}\backslash \mathcal{C}_1$. 
Suppose that $\mathcal{C}_1$ is not an empty set. Define $|V(\mathcal{C}_1)|=cn$. We consider the following cases.

\begin{case}
$0< c<\eta/2.$
\end{case}
Using Lemma \ref{pancyclic}, for given $\eta>0$, we can choose a suitable integer $N$ such that
the loose cycles in $\mathcal{C}_1$ are embedded into $\mathcal{H}$ one by one. Indeed, let $\mathcal{H}'=\mathcal{H}[V(\mathcal{H})\setminus V(\mathcal{C}_1)]$, we have $\delta_2(\mathcal{H}')\geq(1/4+\eta)n-\eta n/2\geq(1/4+\eta/2)|\mathcal{H}'|$. We can embed the loose cycles of $\mathcal{C}_2$ into $\mathcal{H}'$ as Theorem \ref{reducethm0}.

\begin{case}
$\eta/2\leq c<1-\eta/2.$
\end{case}

In this case, we randomly partition $V(\mathcal{H})$ into $V_1$ and $V_2$ with $|V_1|=|V(\mathcal{C}_1)|$ and $|V_2|=|V(\mathcal{C}_2)|$. For $u,v\in V(\mathcal{H})$, let $X_{u,v}^{i}$ be the random variable $|N_{\mathcal{H}}(u,v)\cap V_i|$, where $i\in[2]$. By Lemma \ref{pro}, we have
$$\mathds{P}(X_{u,v}^i\leq (1/4+\eta/2)|V_i|)\leq e^{-\eta^2|V_i|/2}.$$
By the union bound,
$$\begin{aligned}
\mathds{P}(\exists u, v\in V_i: X_{u, v}^i\leq (1/4+\eta/2)|V_i|)&\leq |V_i|^2\max_{u, v\in V_i}\mathds{P}(X_{u, v}^i\leq (1/4+\eta/2)|V_i|)\\
&\leq |V_i|^2e^{-\eta^2|V_i|/2}.
\end{aligned}$$
Therefore, we can choose $n_0$ such that for $n\geq n_0$, this probability tends to 0. This implies that
there is a partition $V(\mathcal{H})=V_1\cup V_2$ such that $\delta_2(\mathcal{H}[V_1]) \geq(1/4+\eta/2)|V_1|$ and $\delta_2(\mathcal{H}[V_2]) \geq(1/4+\eta/2)|V_2|$. Then, the loose cycles of $\mathcal{C}_1$ can be embedded into $\mathcal{H}[V_1]$ as Case 4, and the loose cycles of $\mathcal{C}_2$ can be embedded into $\mathcal{H}[V_2]$ by Theorem \ref{reducethm0}.

\begin{case}
$1-\eta/2\leq c<1.$
\end{case}

Note that $0<|V(\mathcal{C}_2)|\leq\eta n/2$ in this case. We can embed loose cycles in $\mathcal{C}_2$ one by one into $\mathcal{H}$ by Lemma \ref{pancyclic}. Let $\mathcal{H}'=\mathcal{H}[V(\mathcal{H})\setminus V(\mathcal{C}_2)]$, we know that $\delta_2(\mathcal{H}')\geq(1/4+\eta/2)n\geq(1/4+\eta/2)|\mathcal{H}'|$. Then the loose cycles of $\mathcal{C}_1$ can be embedded into $\mathcal{H}'$ as Case 4.

\begin{case}
$c=1$, i.e., all loose cycles in $\mathcal{C}$ has larger than $\log n$ vertices.
\end{case}

Assume that $n_1\geq n_2\geq\ldots\geq n_r>\log n$. We consider two cases relying on $n_1$.

\begin{subcase}
$n_1\geq(1-\eta/2)n.$    
\end{subcase}

We embed all loose cycles except for $C_{n_1}$ one by one using Lemma \ref{pancyclic}. Indeed, let $\mathcal{H}'=\mathcal{H}[V(\mathcal{H})\setminus \bigcup_{i=2}^{r}V(C_{n_i})]$, we know that $\delta_2(\mathcal{H}')\geq(1/4+\eta/2)n\geq(1/4+\eta/2)|\mathcal{H}'|$. Then $\mathcal{H}'$ contains a loose Hamilton cycle which means that $C_{n_1}$ can be embedded.

\begin{subcase} 
$n_i<(1-\eta/2)n$ for each $i\in[r].$
\end{subcase}

Divide the loose cycles of $\mathcal{C}$ into two parts $\mathcal{C}_1^1$ and $\mathcal{C}_1^2$ such that $|V(\mathcal{C}_1^1)|\leq(1-\eta/2)n$ and $|V(\mathcal{C}_1^2)|\leq(1-\eta/2)n$. Note that $|V(\mathcal{C}_1^j)|\geq\eta n/2$ for each $j\in[2]$. Then we randomly partition $V(\mathcal{H})$ into two sets $V_1^1$ and $V_1^2$ such that $|V_1^1|=|V(\mathcal{C}_1^1)|$ and $|V_1^2|=|V(\mathcal{C}_1^2)|$. In the same way as Case 2, we know that there is a vertex partition $V(\mathcal{H})=V_1^1\cup V_1^2$ such that $\delta_2(\mathcal{H}[V_1^1])\geq(1/4+\eta/2)|V_1^1|$ and $\delta_2(\mathcal{H}[V_1^2])\geq(1/4+\eta/2)|V_1^2|$. 
In the following, we will embed the loose cycles of $\mathcal{C}_1^j$ into $\mathcal{H}[V_1^j]$ for $j\in[2]$. If $\mathcal{C}_1^j$ contains a loose cycle on at least $(1-\eta/2)|V_1^j|$ vertices for some $j\in[2]$, then $\mathcal{C}_1^j$ can be embedded into $\mathcal{H}[V_1^j]$ as Subcase 4.1. Otherwise, recursively apply the same partitioning process until the condition of Subcase 4.1 is satisfied.
Let $k$ denote the times of the vertex partitions. Then there are $2^k$ vertex subsets.
For $i\in[k]$ and $j\in[2^k]$, we use $V_{i}^{j}$ and $\mathcal{C}_{i}^{j}$ to denote the vertex subsets and the sets of loose cycles.
Since the partitioning process terminates before the number of vertices in the subhypergraph is less than $\log n$, we have $k<\log n$. 
Note that $\eta^kn/2^k\leq|V_{k}^{j}|\leq (1-\eta/2)^{k}n$ for each $j\in[2^k]$.
By Lemma \ref{pro} and the union bound, we have

$$\begin{aligned}
&\mathds{P}\left(\exists u, v\in V_k^j: |N_{\mathcal{H}}(u,v)\cap V_{k}^j|\leq (1/4+\eta-\sum_{t=1}^{k}(\eta/2)^t)|V_k^j|\right)\\
&\leq |V_k^j|^2\max_{u, v\in V_k^j}\mathds{P}\left(|N_{\mathcal{H}}(u,v)\cap V_{k}^j|\leq (1/4+\eta/4)|V_k^j|\right)\\
&\leq |V_k^j|^2e^{-9\eta^2|V_k^j|/8}\longrightarrow 0,
\end{aligned}$$
where $j\in[2^k]$.
This means that the minimum codegree condition required in Lemma \ref{pancyclic} remains satisfied in the subhypergraph of $\mathcal{H}$ until the partitioning process terminates.
Hence, the minimum codegree conditions are satisfied in each partitioning process. 
If Subcase 4.1 satisfies, we can embed loose cyclses in the subhypergraph. 
Otherwise, we can apply Lemma \ref{pancyclic} to find a loose Hamilton cycle in the subhypergraph. We finish the proof.



\subsection{}

\begin{lemma}\label{bili}
    For any positive integer $q$ and any sequence of positive numbers $a_1, a_2, \ldots, a_k$ satisfying $\sum_{i=1}^k a_i=1$, there exists a sequence of positive integers $q_1,q_2,\ldots ,q_k$ with $\sum_{i=1}^k q_i=q$ such that $|q_i-qa_i|\leq 1$ for each $i\in [k]$.
\end{lemma}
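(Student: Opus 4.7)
The plan is to use a standard floor-and-adjust (apportionment) argument. Set $b_i := q a_i$ for each $i \in [k]$, so that $\sum_{i=1}^{k} b_i = q$. Initialize each $q_i$ as $\lfloor b_i \rfloor$; then $\sum_{i=1}^{k} q_i \leq q$, and the deficit
\[
d := q - \sum_{i=1}^{k} \lfloor b_i \rfloor \;=\; \sum_{i=1}^{k} \{b_i\}
\]
is a non-negative integer, where $\{x\} := x - \lfloor x\rfloor$. Because each fractional part satisfies $\{b_i\} < 1$, we have $d \leq k$, with $d = k$ impossible unless every $\{b_i\}$ equals $1$ (which cannot happen); hence $d \leq k-1$.

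Next, the key observation is that the number of indices $i$ with $\{b_i\} > 0$ is at least $d$: otherwise $\sum_i \{b_i\} < d$, contradicting the displayed identity. So I can select any $d$ indices $i_1,\ldots,i_d$ with $\{b_{i_j}\} > 0$ and increment their assignments by $1$, i.e.\ set $q_{i_j} := \lfloor b_{i_j}\rfloor + 1$ and leave the other $q_i := \lfloor b_i\rfloor$. By construction $\sum_{i=1}^{k} q_i = q$, and for every $i$
\[
|q_i - q a_i| \;=\; |q_i - b_i| \;=\; \begin{cases} \{b_i\} < 1 & \text{if } q_i = \lfloor b_i\rfloor, \\ 1 - \{b_i\} < 1 & \text{if } q_i = \lfloor b_i\rfloor + 1, \end{cases}
\]
so $|q_i - q a_i| \leq 1$ in all cases. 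There is no real obstacle here; the only mild care needed is ensuring that the $d$ indices chosen for incrementing have strictly positive fractional part (so that the $|q_i - b_i|$ bound remains $< 1$ rather than requiring the weaker $\leq 1$), which is exactly what the counting observation above guarantees. In the application of the lemma in the paper the weights $a_i = |T_i|/\sum_j |T_j|$ satisfy $q a_i \gg 1$, so all the resulting $q_i$ are automatically positive integers, matching the statement.
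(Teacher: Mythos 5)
Your proof is correct and follows essentially the same floor-and-adjust argument as the paper: take $\lfloor qa_i\rfloor$ and distribute the deficit by incrementing some coordinates by one. The extra care you take in choosing indices with strictly positive fractional part is not needed (the statement only asks for $|q_i-qa_i|\leq 1$, and incrementing an index with $\{qa_i\}=0$ still gives error exactly $1$), but it does no harm.
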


\begin{proof}
    Let $w=q-\sum_{i=1}^k \lfloor qa_i \rfloor$, we have
    $w\leq q-\sum_{i=1}^k (qa_i-1)\leq k$. 
    Let $q_i=\lfloor qa_i \rfloor+1$ for each $i\in [w]$ and let $q_i=\lfloor qa_i \rfloor$ for each $i\in\{w, w+1, \ldots, k\}$. 
    Thus, we have $\sum_{i=1}^k q_i=w+\sum_{i=1}^k \lfloor qa_i \rfloor=q$ and for each $i\in [k]$, we have
    $$|q_i-qa_i|\leq \max \{qa_i-\lfloor qa_i \rfloor, \lceil qa_i \rceil-qa_i \}\leq 1.$$
\end{proof}



\end{document}